\newtheorem{thm}{Theorem}[section]
\newtheorem{prop}[thm]{Proposition}
\newtheorem{lemma}[thm]{Lemma}
\newtheorem{cor}[thm]{Corollary}
\newtheorem{exam}[thm]{Example}
\theoremstyle{remark}
\newtheorem{remark}[thm]{Remark}
\newcommand{\id}{{\rm{id}}}
\newcommand{\Ad}{{\rm{Ad}}}
\newcommand{\BN}{\mathbf N}
\newcommand{\BC}{\mathbf C}
\newcommand{\BK}{\mathbf K}
\newcommand{\BB}{\mathbf B}
\newcommand{\la}{\langle}
\newcommand{\ra}{\rangle}
\newcommand{\Ind}{{\rm{Ind}}}
\newcommand{\Pic}{{\rm{Pic}}}
\newcommand{\Equi}{{\rm{Equi}}}
\newcommand{\Aut}{{\rm{Aut}}}
\newcommand{\Int}{{\rm{Int}}}
\newtheorem{Def}{Definition}[section]
\title{Strong Morita equivalence for conditional expectations}
\author{Kazunori Kodaka}
\address{Department of Mathematical Sciences, Faculty of Science, Ryukyu University, 
\endgraf
Nishihara-cho, Okinawa, 903-0213, Japan}
\address{\sl{E-mail address}: \rm{kodaka@math.u-ryukyu.ac.jp}}
\keywords{bimodule maps, inclusions of $C^*$-algebras, conditional expectations,
strong Morita equivalence}
\subjclass[2010]{46L05}
\begin{document}
\begin{abstract}
We consider two inclusions of $C^*$-algebras whose small $C^*$-algebras have approximate units of the
large $C^*$-algebras and their two spaces of all bounded bimodule linear maps.
We suppose that the two inclusions of $C^*$-algebras are strongly Morita equivalent.
In this paper, we shall show that there exists an isometric isomorphism from one of the spaces of all bounded
bimodule linear maps to the other space and we shall study on basic properties about the isometric isomorphism. And,
using this isometric isomorphism, we define the Picard group for a bimodule linear map and discuss on the
Picard group for a bimodule linear map.
\end{abstract}

\maketitle

\section{Introduction}\label{sec:intro} Let $A\subset C$ and $B\subset D$ be inclusions of
$C^*$-algebras with $\overline{AC}=C$,
$\overline{BD}=D$. Let ${}_A \BB_A (C, A)$, $_{B} \BB_B (D, B)$ be the spaces of all bounded
$A-A$-bimodule linear maps and all bounded $B-B$-bimodule linear maps from $C$ and $D$ to $A$ and $B$, respectively.
We suppose that they are strongly Morita equivalent with respect to a
$C-D$-equivalence bimodule $Y$ and its closed subspace $X$.
In this paper, we shall
define an isometric isomorphism $f$ of ${}_B \BB_B (D, B)$ onto ${}_A \BB_A (C, A)$ induced by
$Y$ and $X$. We shall study on the basic
properties about $f$. Especially, we shall give the following result: Let $A$ and $B$ are
unital $C^*$-algebras. If $\phi$ is an element in ${}_B \BB_B (D, B)$
having a quasi-basis defined in Watatani \cite [Definition 1.11.1]{Watatani:index}, then $f(\phi)$ is also an
element in ${}_A \BB_A (C, A)$ having a quasi-basis and there is an isomorphism of $\pi$ of $B' \cap D$ onto
$A' \cap C$ such that
$$
\theta^{f(\phi)}=\pi\circ\theta^{\phi}\circ\pi^{-1} ,
$$
where $\theta^{\phi}$ and $\theta^{f(\phi)}$ are the modular automorphisms for $\phi$ and $f(\phi)$,
respectively which are defined in \cite [Definition 1.11.2]{Watatani:index}.
We note that the isometric isomorphism $f$ of ${}_B \BB_B (D, B)$ onto ${}_A \BB_A (C, A)$ depends on the
choice of a $C-D$-equivalence bimodule $Y$ and its closed subspace $X$.
We shall also discuss the relation between $f$ and the pair $(X, Y)$.
Furthermore, we defne the Picard group for a bimodule linear map and
we discuss on the Picard group of a bimodule linear map.
\par
For a $C^*$-algebra $A$, we denote by $\id_A$ the identity
map on $A$ and if $A$ is unital, we denote by$1_A$ the unit element in $A$.
If no confusion arises, we denote them by $1$ and $\id$, respectively.
For each $n\in\BN$, we denote by $M_n (\BC)$ the $n\times n$-matrix algebra over $\BC$ and $I_n$
denotes the unit element in $M_n (\BC)$. Also, we denote by $M_n (A)$ the $n\times n$-matrix algebra
over $A$ and we identify $M_n(A)$ with $A\otimes M_n (\BC)$ for any $n\in \BN$.
For a $C^*$-algebra $A$, let $M(A)$ be the multiplier $C^*$-algebra of $A$.
\par
Let $\BK$ be the $C^*$-algebra of all compact operators on a countably infinite dimensional Hilbert space.
\par
Let $A$ and $B$ be $C^*$-algebras. Let $X$ be an $A-B$-equivalence bimodule.
For any $a\in A$, $b\in B$, $x\in X$,
we denote by $a\cdot x$ the left $A$-action on $X$ and by $x\cdot b$ the right $B$-action on $X$, respectively.
Let ${}_A \BK(X)$ be the $C^*$-algebra of all `` compact" adjointable left $A$-linear operators on $X$
and we identify ${}_A \BK(X)$ with $B$. Similarly we define $\BK_B (X)$ and we identify $\BK_B (X)$ with $A$.

\section{Construction}\label{sec:con} Let $A\subset C$ and $B\subset D$ be
inclusions of $C^*$-algebras with $\overline{AC}=C$ and $\overline{CD}=D$.
Let ${}_A \BB_A (C, A)$, $_{B} \BB_B (D, B)$ be the spaces of all bounded
$A-A$-bimodule linear maps and all bounded $B-B$-bimodule linear maps from $C$ and $D$ to $A$ and $B$, respectively.
We suppose that $A\subset C$ and
$B\subset D$ are strongly Morita equivalent with respect to a $C-D$-equivalence bimodule $Y$ and
its closed subspace $X$. We construct an isometric isomorphism of ${}_B \BB_B (D, B)$ onto ${}_A \BB_A (C, A)$.
For any $\phi\in {}_B \BB_B (D, B)$, we define the linear
map $\tau$ from $Y$ to $X$ by
$$
\la x \, , \, \tau(y) \ra_B =\phi( \la x, y \ra_D )
$$
for any $x\in X$, $y\in Y$.

\begin{lemma}\label{lem:con1} With the above notation, $\tau$ satisfies the following conditions:
\newline
$(1)$ $\tau(x\cdot d)=x\cdot \phi(d)$,
\newline
$(2)$ $\tau(y\cdot b)= \tau(y)\cdot b$,
\newline
$(3)$ $\la x \, , \, \tau(y) \ra_B =\phi( \la x, y \ra_D )$
\newline
for any $b\in B$, $d\in D$, $x\in X$, $y\in Y$.
Also, $\tau$ is bounded and $||\tau||\leq ||\phi||$.
Furthermore, $\tau$ is the unique linear map from $Y$ to $X$ satisfying Condition $(3)$.
\end{lemma}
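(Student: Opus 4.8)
The plan is to realise $\tau$ first on a dense subspace of $Y$, verify $(1)$--$(3)$ and the norm bound there, and then extend by continuity; the uniqueness clause will come essentially for free from positive-definiteness of the $B$-valued inner product on $X$. I would begin with that uniqueness statement, since it is the quickest and it already shows that the displayed formula can determine at most one map. If $\tau'\colon Y\to X$ is any map satisfying Condition $(3)$, then for every $y\in Y$ and every $x\in X$ we have $\la x\,,\,\tau(y)-\tau'(y)\ra_B=\phi(\la x,y\ra_D)-\phi(\la x,y\ra_D)=0$; putting $x=\tau(y)-\tau'(y)\in X$ forces $\la\tau(y)-\tau'(y)\,,\,\tau(y)-\tau'(y)\ra_B=0$, hence $\tau(y)=\tau'(y)$. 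So it suffices to exhibit one map satisfying $(3)$.

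For existence I would invoke the basic structural fact, built into strong Morita equivalence of the inclusions, that the linear span $Y_0$ of $\{x\cdot d : x\in X,\ d\in D\}$ is dense in $Y$, and define $\tau$ on $Y_0$ by $\tau(\sum_k x_k\cdot d_k)=\sum_k x_k\cdot\phi(d_k)$, which lies in $X\cdot B\subseteq X$. That this is well defined, linear, and satisfies $(3)$ on $Y_0$ follows from the computation, for $x\in X$,
\begin{align*}
\la x\,,\,\textstyle\sum_k x_k\cdot\phi(d_k)\ra_B
&=\textstyle\sum_k\la x,x_k\ra_B\,\phi(d_k)
=\textstyle\sum_k\phi(\la x,x_k\ra_B\, d_k)\\
&=\textstyle\sum_k\phi(\la x,x_k\cdot d_k\ra_D)
=\phi\bigl(\la x\,,\,\textstyle\sum_k x_k\cdot d_k\ra_D\bigr),
\end{align*}
where I use that $\phi$ is a $B$--$B$-bimodule map, the compatibility $\la x,x_k\ra_B=\la x,x_k\ra_D$ for elements of $X$, and $\la x,x_k\ra_D\, d_k=\la x,x_k\cdot d_k\ra_D$ in the Hilbert $D$-module $Y$. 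Since the last expression depends only on $\sum_k x_k\cdot d_k$ and positivity of $\la\cdot,\cdot\ra_B$ separates points of $X$, the value $\sum_k x_k\cdot\phi(d_k)$ is independent of the chosen representation, so $\tau$ is well defined and linear on $Y_0$ and satisfies $(3)$ there; Condition $(1)$ then holds outright on the generators $x\cdot d$, and Condition $(2)$ holds on $Y_0$ because $\phi(d\,b)=\phi(d)\,b$.

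For the bound, taking $x=\tau(y)\in X$ in $(3)$ for $y\in Y_0$ gives $\la\tau(y),\tau(y)\ra_B=\phi(\la\tau(y),y\ra_D)$, whence by the Cauchy--Schwarz inequality in the Hilbert $D$-module $Y$,
$$
\|\tau(y)\|^2=\|\phi(\la\tau(y),y\ra_D)\|\le\|\phi\|\,\|\la\tau(y),y\ra_D\|\le\|\phi\|\,\|\tau(y)\|\,\|y\|,
$$
so $\|\tau(y)\|\le\|\phi\|\,\|y\|$. Hence $\tau$ extends uniquely to a bounded linear map $Y\to X$ with $\|\tau\|\le\|\phi\|$; both sides of $(3)$ and of $(2)$ are norm-continuous in $y$, so these conditions persist on all of $Y$, Condition $(1)$ needed no extension, and the uniqueness clause was already proved. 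I expect the only genuine subtlety to be the density of $Y_0$ in $Y$ — that $X$ ``fills up'' $Y$ in the appropriate sense — which I would take from the definition of, and the established basic properties of, strong Morita equivalence of inclusions; everything else is routine bookkeeping with Hilbert-module inner products and the bimodule property of $\phi$.
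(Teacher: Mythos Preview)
Your argument is correct, but the route differs from the paper's. The paper treats the formula $\la x,\tau(y)\ra_B=\phi(\la x,y\ra_D)$ as an implicit \emph{definition} of $\tau(y)$ (with the existence of such an element in $X$ justified by the reference to \cite[Lemma~3.4 and Remark~3.3(ii)]{KT4:morita}) and then reads off Conditions~(1)--(3) and the norm bound directly from that equation, using the Raeburn--Williams identity $\|z\|=\sup_{\|x\|\le 1}\|\la x,z\ra_B\|$ rather than Cauchy--Schwarz. You instead build $\tau$ explicitly on the dense span $X\cdot D$, check well-definedness via the inner-product computation, and bound the norm by plugging $x=\tau(y)$ into~(3); this is more self-contained (no appeal to a Riesz-type representation in Hilbert modules), at the cost of needing the density $\overline{X\cdot D}=Y$, which does hold in this setting (it is the right-handed companion of $C\cdot X=Y$, cf.\ \cite[Lemma~10.1]{KT4:morita}). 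The uniqueness argument is the same in both versions.
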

\begin{proof} Except for the uniquness of $\tau$, we can prove this lemma in the same way as in the proof of
\cite [Lemma 3.4 and Remark 3.3 (ii)]{KT4:morita}. Indeed, the definition of $\tau$, $\tau$ satisfies Condition (3). For any
$x, z\in X$, $d\in D$,
$$
\la z, \tau(x\cdot d)\ra_B =\phi(\la z, x\cdot d \ra_D )=\phi(\la z, x \ra_B \, d) =\la z, x \ra_B \, \phi(d)
=\la z, x\cdot \phi(d)\ra_B .
$$
Hence $\tau(x\cdot d)=x\cdot \phi(d)$ for any $x\in X$, $d\in D$. Also, for any $b\in B$, $z\in X$ $y\in Y$,
$$
\la z, \tau(y\cdot b)\ra_B =\phi(\la z, y\cdot b \ra_D \, )=\phi(\la z, y \ra_D \, b)=\phi(\la z, y \ra_D \, )b
=\la z, \tau(y)\cdot b \ra_B .
$$
Thus $\tau(y\cdot b)=\tau(y)\cdot b$ for any $b\in B$, $y\in Y$.
By Raeburn and Williams \cite [the proof of Lemma 2.18]
{RW:continuous}, for any $y\in Y$
\begin{align*}
||\tau(y)|| & =\sup \{||\la x, \tau(y) \ra_B \,\,  || \, \, | \, ||x||\leq 1, \, x\in X \} \\
& =\sup \{||\phi(\la x, y \ra_D \, ) \, || \, \, | \, \, ||x||\leq 1, \, x\in X \} \\
& \leq \sup \{||\phi|| \, ||x|| \, ||y||  \, \, | \, \, ||x||\leq 1, \, x\in X \} \\
& \leq ||\phi||\, ||y||.
\end{align*}
Thus $\tau$ is bounded and $||\tau||\leq ||\phi||$. Furthermore, let $\tau'$ be a linear map from $Y$ to $X$ satisfying
Condition (3). Then for any $x\in X$, $y\in Y$,
$$
\la x, \tau(y)\ra_B =\phi(\la x, y \ra_D \, )=\la x, \tau' (y) \ra_B .
$$
Hence $\tau(y)=\tau' (y)$ for any $y\in Y$. Therefore, $\tau$ is unique.
\end{proof}

\begin{lemma}\label{lem:con2} With the above notation, $\tau(a\cdot y)=a\cdot\tau(y)$ for any $a\in A$, $y\in Y$.
\end{lemma}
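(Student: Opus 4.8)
The plan is to reduce the identity to the case where $a$ is a ``rank-one'' element $\,{}_A\la x, z\ra$ with $x, z\in X$. Recall that $\BK_B(X)$ is identified with $A$, so $A$ is the closed linear span of the elements $\,{}_A\la x, z\ra$, $x, z\in X$; since $\tau$ is bounded by Lemma \ref{lem:con1} and the module actions are contractive, it suffices to verify $\tau(a\cdot y)=a\cdot\tau(y)$ for such $a$ and then extend by linearity and a routine limit argument.

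For $a={}_A\la x, z\ra$ with $x, z\in X$ and $y\in Y$, I would first invoke the compatibility of the inner products built into the definition of strong Morita equivalence of inclusions (cf. \cite{KT4:morita}): the $C$-valued inner product on $Y$ restricts on $X\times X$ to the $A$-valued inner product on $X$, and likewise the $D$-valued inner product restricts to the $B$-valued one. Hence, regarding $A\subseteq C$,
$$
a\cdot y={}_A\la x, z\ra\cdot y={}_C\la x, z\ra\cdot y=x\cdot\la z, y\ra_D ,
$$
where $\la z, y\ra_D\in D$ acts on $x\in X\subseteq Y$ through the right $D$-module structure of $Y$. Applying $\tau$ and using Lemma \ref{lem:con1}$(1)$ with $d=\la z, y\ra_D$,
$$
\tau(a\cdot y)=\tau(x\cdot\la z, y\ra_D)=x\cdot\phi(\la z, y\ra_D) .
$$
Now Condition $(3)$ of Lemma \ref{lem:con1} gives $\phi(\la z, y\ra_D)=\la z, \tau(y)\ra_B$, and the equivalence-bimodule identity $x\cdot\la z, w\ra_B={}_A\la x, z\ra\cdot w$ applied with $w=\tau(y)\in X$ yields
$$
\tau(a\cdot y)=x\cdot\la z, \tau(y)\ra_B={}_A\la x, z\ra\cdot\tau(y)=a\cdot\tau(y) ,
$$
as required. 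Passing to finite sums and then to norm limits (if $a_n\to a$ in $A$ then $a_n\cdot y\to a\cdot y$ in $Y$, $a_n\cdot\tau(y)\to a\cdot\tau(y)$ in $X$, and $||\tau||\leq||\phi||<\infty$) finishes the proof.

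The only genuinely structural point is the compatibility between the $Y$-valued and the $X$-valued inner products, i.e. that $\,{}_C\la\cdot,\cdot\ra$ restricted to $X$ equals $\,{}_A\la\cdot,\cdot\ra$ and similarly on the right; this is exactly what legitimizes the ``mixed'' identity $\,{}_A\la x, z\ra\cdot y=x\cdot\la z, y\ra_D$ for $x, z\in X$ but $y\in Y$, and it is where the hypothesis that $X$ is the closed subspace implementing the strong Morita equivalence of the inclusions is used. Once that is granted, the computation above is purely formal, built entirely from Lemma \ref{lem:con1} together with the defining relations of the equivalence bimodules $X$ and $Y$; the density/continuity step is the expected, mild obstacle.
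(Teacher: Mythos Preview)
Your proof is correct and follows essentially the same route as the paper's: reduce to rank-one elements $a={}_A\la x,z\ra$, use the compatibility ${}_A\la x,z\ra\cdot y=x\cdot\la z,y\ra_D$, apply Lemma~\ref{lem:con1}(1) and (3), rewrite via the bimodule identity $x\cdot\la z,\tau(y)\ra_B={}_A\la x,z\ra\cdot\tau(y)$, and then extend by density and boundedness of $\tau$. The paper records exactly this chain of equalities in one line, with less commentary on the compatibility of inner products, but the argument is the same.
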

\begin{proof} We can prove this lemma by routine computations.
Indeed, for any $x, z\in X$, $y\in Y$,
$$
\tau({}_A \la x, z \ra\cdot y)=\tau( x\cdot \la z , y \ra_D )=x\cdot \phi(\la z, y \ra_D )
=x\cdot \la z, \tau (y)\ra_B ={}_A \la x, z \ra \cdot \tau(y) .
$$
Since $\overline{{}_A \la X\, , \, X \ra}=A$ and $\tau$ is bounded, we obtain the conclusion.
\end{proof}

Let $\psi$ be the linear map from $C$ to $A$ defined by
$$
\psi(c)\cdot x=\tau(c\cdot x)
$$
for any $c\in C$, $x\in X$, where we identify $\BK_B (X)$ with $A$ as $C^*$-algebras
by the map $a\in A \mapsto T_a \in \BK_B (X)$, which is defined by $T_a (x)=a\cdot x$ for any $x\in X$.

\begin{lemma}\label{lem:con3} With the above notation, $\psi$ is a linear map from $C$ to $A$
satisfying the following conditions:
\newline
$(1)$ $\tau(c\cdot x)=\psi(c)\cdot x$,
\newline
$(2)$ $\psi({}_C \la y, x \ra)={}_A \la \tau(y) \, , \, x \ra$
\newline
for any $c\in C$, $x\in X$, $y\in Y$.
Also, $\psi$ is a bounded $A-A$-bimodule linear map from $C$ to $A$ with
$||\psi||\leq ||\tau||$. Furthermore, $\psi$ is the unique linear map from $C$ to $D$ satisfying Condition $(1)$.
\end{lemma}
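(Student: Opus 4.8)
The plan is to define $\psi(c)$, for $c\in C$, as the unique element of $A$ that implements the bounded operator $\Phi_c\colon X\to X$, $\Phi_c(x)=\tau(c\cdot x)$, under the identification $\BK_B(X)\cong A$, $a\mapsto T_a$. First I would note that $\Phi_c$ is well defined: $c\cdot x\in Y$ for $c\in C$, $x\in X\subseteq Y$, and $\tau$ maps $Y$ into $X$ by Lemma~\ref{lem:con1}, so $\Phi_c(x)\in X$. By Lemma~\ref{lem:con1}(2), applied to $c\cdot x\in Y$ and $b\in B$, $\Phi_c$ is right $B$-linear; moreover $\|\Phi_c(x)\|=\|\tau(c\cdot x)\|\le\|\tau\|\,\|c\|\,\|x\|$, so $\Phi_c$ is bounded with $\|\Phi_c\|\le\|\tau\|\,\|c\|$, and $c\mapsto\Phi_c$ is plainly linear. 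Granting that $\Phi_c\in\BK_B(X)$ for every $c$, I would set $\psi(c):=T^{-1}(\Phi_c)\in A$; then $\psi$ is linear, $\psi(c)\cdot x=\Phi_c(x)=\tau(c\cdot x)$ (Condition $(1)$), and $\|\psi(c)\|=\|\Phi_c\|\le\|\tau\|\,\|c\|$ since $a\mapsto T_a$ is isometric, giving $\|\psi\|\le\|\tau\|$.

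The heart of the matter is to show $\Phi_c\in\BK_B(X)$. I would first check that $C$ is the closed linear span of $\{{}_C\la y,x\ra : y\in Y,\ x\in X\}$: since $X$ is an $A$--$B$-equivalence bimodule and ${}_A\la x_1,x_2\ra={}_C\la x_1,x_2\ra$ for $x_i\in X$, we have $A=\overline{\mathrm{span}}\,{}_A\la X,X\ra=\overline{\mathrm{span}}\,{}_C\la X,X\ra$, hence $C=\overline{AC}=\overline{\mathrm{span}}\{{}_C\la x_1,x_2\ra\cdot c\}=\overline{\mathrm{span}}\{{}_C\la x_1,c^*\cdot x_2\ra\}$ with $c^*\cdot x_2\in Y$, and taking adjoints gives the claim. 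Now for $c={}_C\la y,x_0\ra$ with $y\in Y$, $x_0\in X$, and any $z\in X$,
\begin{align*}
\Phi_c(z) &= \tau\bigl({}_C\la y,x_0\ra\cdot z\bigr) = \tau\bigl(y\cdot\la x_0,z\ra_D\bigr) = \tau\bigl(y\cdot\la x_0,z\ra_B\bigr) \\
&= \tau(y)\cdot\la x_0,z\ra_B = {}_A\la\tau(y),x_0\ra\cdot z ,
\end{align*}
using the equivalence-bimodule identity on $Y$, then $\la x_0,z\ra_D=\la x_0,z\ra_B\in B$ for $x_0,z\in X$, then Lemma~\ref{lem:con1}(2), then the definition of the rank-one operator in $\BK_B(X)$. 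Hence $\Phi_{{}_C\la y,x_0\ra}=T_{{}_A\la\tau(y),x_0\ra}\in\BK_B(X)$, so by linearity $\Phi_c\in\BK_B(X)$ for $c$ in the dense subspace $\mathrm{span}\{{}_C\la Y,X\ra\}$; since $\|\Phi_c-\Phi_{c'}\|\le\|\tau\|\,\|c-c'\|$ and $\BK_B(X)$ is closed in $\BB_B(X)$, in fact $\Phi_c\in\BK_B(X)$ for every $c\in C$. The same computation yields Condition $(2)$: $T_{\psi({}_C\la y,x\ra)}=\Phi_{{}_C\la y,x\ra}=T_{{}_A\la\tau(y),x\ra}$, so $\psi({}_C\la y,x\ra)={}_A\la\tau(y),x\ra$ by injectivity of $a\mapsto T_a$.

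It remains to check that $\psi$ is $A$--$A$-bimodule linear and unique. For $a\in A$, $c\in C$, $x\in X$: $\psi(ac)\cdot x=\tau\bigl(a\cdot(c\cdot x)\bigr)=a\cdot\tau(c\cdot x)=a\psi(c)\cdot x$ by Lemma~\ref{lem:con2} (with $c\cdot x\in Y$) and Condition $(1)$, and $\psi(ca)\cdot x=\tau\bigl(c\cdot(a\cdot x)\bigr)=\psi(c)\cdot(a\cdot x)=\psi(c)a\cdot x$ by Condition $(1)$ (with $a\cdot x\in X$); injectivity of $a\mapsto T_a$ then gives $\psi(ac)=a\psi(c)$ and $\psi(ca)=\psi(c)a$. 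For uniqueness, if $\psi'\colon C\to A$ is linear with $\tau(c\cdot x)=\psi'(c)\cdot x$ for all $c$, $x$, then $T_{\psi'(c)}=\Phi_c=T_{\psi(c)}$, hence $\psi'=\psi$.

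The only genuine obstacle is the step showing that $\Phi_c$ lies in the ``compact'' operators $\BK_B(X)$ rather than merely in the multiplier algebra $\BB_B(X)$: an arbitrary bounded right $B$-linear operator on $X$ need not be compact, so one must exploit the rigidity coming from the inclusion, namely that $C$ is densely spanned by inner products ${}_C\la y,x\ra$ having a leg in $X$ --- which in turn uses $\overline{AC}=C$ and the compatibility of $A\subset C$ with $X\subset Y$. The remaining computations are routine transcriptions of the equivalence-bimodule manipulations already used in Lemmas~\ref{lem:con1} and~\ref{lem:con2}.
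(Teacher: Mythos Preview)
Your proof is correct and follows essentially the same route as the paper: the paper runs the identical chain $\psi({}_C\la y,x\ra)\cdot z=\tau({}_C\la y,x\ra\cdot z)=\tau(y\cdot\la x,z\ra_B)=\tau(y)\cdot\la x,z\ra_B={}_A\la\tau(y),x\ra\cdot z$ for Condition~(2), the same $\sup$-estimate for $\|\psi\|\le\|\tau\|$, and the same one-line checks for $A$--$A$-bimodule linearity and uniqueness. The only extra ingredient you supply is the density argument (using $C=\overline{AC}$ to write $C=\overline{\mathrm{span}}\,{}_C\la Y,X\ra$) showing that $\Phi_c$ lands in $\BK_B(X)$ rather than merely in the multiplier algebra; the paper absorbs this into its \emph{definition} of $\psi$ immediately before the lemma, taking the identification $\BK_B(X)\cong A$ as already established (with a reference to \cite{KT4:morita}).
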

\begin{proof} We can prove this lemma in the same way as in the proof of
\cite [Proposition 3.6]{KT4:morita}. Indeed, by the definition of $\psi$, $\psi$ satisfies Condition (1). Also,
for any $x, z\in X$, $y\in Y$,
$$
\psi({}_C \la y, x \ra )\cdot z =\tau(\, {}_C \la y, x \ra \cdot z ) =\tau(y\cdot \la x, z \ra_B \, ) =\tau(y)\cdot \la x, z \ra_B
={}_A \la \tau(y) \, , \, x \ra \cdot z
$$
Hence $\psi(\, {}_C \la y , x \ra )={}_A \la \tau(y) \, , \, x \ra$ for any $x\in X$, $y\in Y$. For any
$c\in C$,
\begin{align*}
||\psi(c)|| & =\sup \{||\psi(c)\cdot x || \,\,  | \,\,  ||x||\leq 1, \, x\in X \} \\
& =\sup \{||\tau(c\cdot x) || \,\,  | \,\,  ||x||\leq 1, \, x\in X \} \\
& \leq \sup \{||\tau|| \, ||c|| \, ||x|| \,\,  | \,\,  ||x||\leq 1, \, x\in X \} \\
& =||\tau||\, ||c|| .
\end{align*}
Thus $\psi$ is bounded and $||\psi||\leq||\tau||$. Next, we show that $\psi$ is an $A-A$-bimodule map
from $C$ to $A$. It suffices to show that
$$
\psi(ac)=a\psi(c), \quad \psi(ca)=\psi(c)a
$$
for any $a\in A$, $c\in C$. Fo any $a\in A$, $c\in C$, $x\in X$,
$$
\psi(ac)\cdot x=\tau(ac\cdot x)=\tau(a\cdot (c\cdot x))=a\tau(c\cdot x)=a\psi(c)\cdot x
$$
by Lemma \ref {lem:con2}. Hence $\psi(ac)=a\psi(c)$. Also,
$$
\psi(ca)\cdot x=\tau(ca\cdot x)=\tau(c\cdot (a\cdot x))=\psi(c)\cdot (a\cdot x)=\psi(c)a\cdot x
$$
since $a\cdot x\in X$. Hence $\psi(ca)=\psi(c)a$. Let $\psi'$ be a linear map from $C$ to $A$
satisfying Condition (1). Then for any $x\in X$, $c\in C$,
$$
\psi(c)\cdot x=\tau(c\cdot x)=\psi' (c)\cdot x .
$$
Thus $\psi(c)=\psi' (c)$ for any $c\in C$. Therefore, we obtain the conclusion.
\end{proof}

\begin{prop}\label{prop:con4} Let $A\subset C$ and $B\subset D$ be inclusions of
$C^*$-algebras with $\overline{AC}=C$ and $\overline{BD}=D$.
We suppose that $A\subset C$ and $B\subset D$ are strongly Morita equivalent with
respect to a $C-D$-equivalence bimodule $Y$ and its closed subspace $X$. Let $\phi$ be any element
in ${}_B \BB_B (D, B)$. Then there are the unique linear map $\tau$ from $Y$ to $X$ and the
unique element $\psi$ in ${}_A \BB_A (C, A)$ satisfying the following conditions:
\newline
$(1)$ $\tau(c\cdot x)=\psi(c)\cdot x$,
\newline
$(2)$ $\tau(a\cdot y)=a\cdot \tau(y)$,
\newline
$(3)$ ${}_A \la \tau(y) \, , \, x \ra =\psi( \, {}_C \la y, x \ra )$,
\newline
$(4)$ $\tau(x\cdot d)=x\cdot \phi(d)$,
\newline
$(5)$ $\tau(y\cdot b)=\tau(y)\cdot b$,
\newline
$(6)$ $\phi(\la x, y \ra_D )=\la x \, , \, \tau(y) \ra_B$
\newline
for any $a\in A$, $b\in B$, $c\in C$, $d\in D$, $x\in X$, $y\in Y$.
Furthermore, $||\psi||\leq ||\tau|| \leq ||\phi||$. Also, for any element $\psi\in {}_A \BB_A (C, A)$, we
have the same results as above. 
\end{prop}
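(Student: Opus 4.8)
The plan is to assemble the proposition from Lemmas \ref{lem:con1}, \ref{lem:con2} and \ref{lem:con3}, handling the existence, the uniqueness, and the symmetric statement in turn. For existence, given $\phi\in{}_B\BB_B(D,B)$ I would take $\tau\colon Y\to X$ to be the linear map introduced just before Lemma \ref{lem:con1}, so that Condition $(6)$ holds by construction; Lemma \ref{lem:con1} then yields Conditions $(4)$ and $(5)$ together with $||\tau||\leq||\phi||$, and Lemma \ref{lem:con2} yields Condition $(2)$. I would then take $\psi\colon C\to A$ to be the linear map introduced just before Lemma \ref{lem:con3}, so that Condition $(1)$ holds by construction; Lemma \ref{lem:con3} shows $\psi\in{}_A\BB_A(C,A)$, yields Condition $(3)$, and yields $||\psi||\leq||\tau||$. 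Chaining the two estimates gives $||\psi||\leq||\tau||\leq||\phi||$.

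For uniqueness, suppose a linear map $\tau'\colon Y\to X$ and an element $\psi'\in{}_A\BB_A(C,A)$ also satisfy Conditions $(1)$--$(6)$. Since $\tau'$ satisfies Condition $(6)$, the uniqueness clause of Lemma \ref{lem:con1} forces $\tau'=\tau$; since $\psi'$ then satisfies Condition $(1)$ with this same $\tau$, the uniqueness clause of Lemma \ref{lem:con3} forces $\psi'=\psi$. So no new computation is needed here.

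For the final assertion, starting instead from $\psi\in{}_A\BB_A(C,A)$, I would observe that the six conditions are invariant under simultaneously interchanging the left and right module structures of $Y$, the inclusions $A\subset C$ and $B\subset D$, the two $C^*$-valued inner products, and the maps $\psi$ and $\phi$ --- explicitly, under this interchange $(1)\leftrightarrow(4)$, $(2)\leftrightarrow(5)$ and $(3)\leftrightarrow(6)$. Hence I can apply the construction already carried out to the conjugate $D-C$-equivalence bimodule of $Y$ with the closed subspace corresponding to $X$, which realizes the strong Morita equivalence of $B\subset D$ and $A\subset C$, and simply read off: a unique $\tau\colon Y\to X$ (now determined by Condition $(3)$) and a unique $\phi\in{}_B\BB_B(D,B)$ satisfying Conditions $(1)$--$(6)$, with the reversed estimate $||\phi||\leq||\tau||\leq||\psi||$.

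I do not anticipate a real obstacle; the whole argument is bookkeeping over the three lemmas. The one point requiring care is the symmetric statement: one must confirm that passing to the conjugate bimodule reproduces Conditions $(1)$--$(6)$ verbatim, with no adjoint or conjugation twist, which is exactly what matching $(1)\leftrightarrow(4)$, $(2)\leftrightarrow(5)$, $(3)\leftrightarrow(6)$ checks. It is also worth flagging that in this direction the norm chain comes out as $||\phi||\leq||\tau||\leq||\psi||$, so only after the correspondence $\phi\leftrightarrow\psi$ is shown to be a bijection do the two directions combine into the equalities $||\psi||=||\tau||=||\phi||$.
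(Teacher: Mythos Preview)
Your proposal is correct and follows exactly the paper's approach: the paper's proof is the single sentence ``This is immediate by Lemmas \ref{lem:con1}, \ref{lem:con2} and \ref{lem:con3},'' and you have simply unpacked that assembly in detail, including the uniqueness via the uniqueness clauses of Lemmas \ref{lem:con1} and \ref{lem:con3} and the symmetric statement via the conjugate bimodule.
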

\begin{proof} This is immediate by
Lemmas \ref{lem:con1}, \ref{lem:con2} and \ref{lem:con3}.
\end{proof}

We denote by $f_{(X, Y)}$ the map
$$
\phi\in{}_B \BB_B (D, B)\mapsto\psi\in{}_A \BB_A (C, A)
$$
as above.
By the definition of $f_{(X, Y)}$ and Proposition \ref{prop:con4}, we can see that $f_{(X, Y)}$ is an isometric
isomorphism of ${}_B \BB_B (D, B)$ onto ${}_A \BB_A (C, A)$.

\begin{lemma}\label{lem:con5} With the above notation, let $\phi$ be any element in ${}_B \BB_B (D, B)$.
Then $f_{(X, Y)}(\phi)$ is the unique linear map from $C$ to $A$ satisfying that
$$
\la x \, , \, f_{(X, Y)}(\phi)(c)\cdot z \ra_B =\phi(\la x \, , \, c\cdot z \ra_D )
$$
for any $c\in C$, $x, z\in X$.
\end{lemma}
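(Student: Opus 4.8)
The plan is to verify the displayed identity directly for $\psi:=f_{(X,Y)}(\phi)$ and then to deduce uniqueness from the faithfulness of the left $A$-action on $X$.

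First I would unwind the construction: by definition $f_{(X,Y)}(\phi)=\psi$ is the element of ${}_A\BB_A(C,A)$ associated to $\phi$ through the auxiliary map $\tau\colon Y\to X$, and Lemma~\ref{lem:con3}~$(1)$ gives $\psi(c)\cdot z=\tau(c\cdot z)$ for all $c\in C$ and $z\in X$. Since $z\in X\subseteq Y$ and $Y$ is a $C$–$D$-bimodule, $c\cdot z\in Y$, so Lemma~\ref{lem:con1}~$(3)$, applied with $y=c\cdot z$, yields
$$
\la x\,,\,\psi(c)\cdot z\ra_B=\la x\,,\,\tau(c\cdot z)\ra_B=\phi(\la x\,,\,c\cdot z\ra_D)
$$
for all $c\in C$ and $x,z\in X$, which is precisely the claimed equation.

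For uniqueness, suppose $\psi'\colon C\to A$ is linear and satisfies the same identity. Subtracting, I get $\la x\,,\,(\psi(c)-\psi'(c))\cdot z\ra_B=0$ for all $c\in C$ and $x,z\in X$. Fixing $c$ and $z$ and choosing $x=(\psi(c)-\psi'(c))\cdot z\in X$, positivity of the $B$-valued inner product forces $(\psi(c)-\psi'(c))\cdot z=0$ for every $z\in X$; equivalently, one may quote the norm formula from Raeburn–Williams \cite[the proof of Lemma 2.18]{RW:continuous}, exactly as in the proof of Lemma~\ref{lem:con1}. Since $A$ is identified with $\BK_B(X)$, the left $A$-action on $X$ is faithful, so $\psi(c)=\psi'(c)$, and hence $\psi=\psi'$.

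I do not expect a genuine obstacle here: the argument is a short computation together with a standard faithfulness remark. The only points demanding a little care are checking that $c\cdot z$ indeed lies in $Y$, so that $\la x\,,\,c\cdot z\ra_D$ is defined and Lemma~\ref{lem:con1}~$(3)$ is applicable, and invoking that $X$, being an $A$–$B$-equivalence bimodule, has a faithful left $A$-action, which is what turns ``$a\cdot z=0$ for all $z\in X$'' into ``$a=0$''.
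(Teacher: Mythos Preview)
Your proof is correct and follows essentially the same approach as the paper: the paper verifies the identity by appealing to the definition of $f_{(X,Y)}(\phi)$ together with Lemma~\ref{lem:con1}, which is exactly what you unwind via Lemma~\ref{lem:con3}(1) and Lemma~\ref{lem:con1}(3), and then deduces uniqueness from the equality of the inner products. Your version is simply more explicit in spelling out the two steps of the uniqueness argument (first $(\psi(c)-\psi'(c))\cdot z=0$, then faithfulness of the $A$-action), which the paper collapses into a single sentence.
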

\begin{proof} It is clear that $f_{(X, Y)}(\phi)$ satisfies the above equation by the definition of $f_{(X, Y)}(\phi)$ and
Lemma \ref{lem:con1}. Let $\psi$ be another linear map from $C$ to $A$ satisfying the above equation.
Then for any $c\in C$, $x, z\in X$, 
$$
\la x, \, f_{(X, Y)}(\phi)(c)\cdot z \ra_B=\la x, \, \psi(c)\cdot z \ra_B .
$$
Hence $f_{(X, Y)}(\phi)(c)=\psi(c)$ for any $c\in C$. Thus $f_{(X, Y)}(\phi)=\psi$.
\end{proof}

Let $\Equi(A, C, B, D)$ be the set of all pairs $(X, Y)$ such that $Y$ is a $C-D$-equivalence bimodule and $X$ is
its closed subspace satisfying Conditions (1), (2) in \cite [Definition 2.1]{KT4:morita}.
We define an equivalence relation ``$\sim$" in $\Equi (A, C, B, D)$ as follows:
For any $(X, Y), (Z, W)\in\Equi(A, C, B, D)$, we say that $(X, Y)\sim(Z, W)$ in $\Equi(A, C, B, D)$ if there is a $C-D$-
equivalence bimodule map $\Phi$ of $Y$ onto $W$ such that $\Phi|_X$ is a bijection of $X$ onto $Z$. Then $\Phi|_X$
is an $A-B$-equivalence bimodule isomorphism of $X$ onto $Z$
by \cite [Lemma 3.2]{Kodaka:Picard2}. We denote by $[X, Y]$
the equivalence class of $(X, Y)\in\Equi(A, C, B, D)$ and we denote by $\Equi(A, C, B, D)/\!\sim$ the set of all
equivalence classes $[X, Y]$ of $(X, Y)\in\Equi(A, C, B, D)$.

\begin{lemma}\label{lem:con6} With the above notation, let $(X, Y), (Z, W)\in\Equi(A, C, B, D)$ with
$(X, Y)\sim(Z, W)$ in $\Equi(A, C, B, D)$. Then $f_{(X, Y)}=f_{(Z, W)}$.
\end{lemma}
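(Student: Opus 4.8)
The plan is to reduce everything to the uniqueness characterization in Lemma \ref{lem:con5}. Fix $\phi\in{}_B\BB_B(D,B)$ and put $\psi:=f_{(X,Y)}(\phi)\in{}_A\BB_A(C,A)$. Applying Lemma \ref{lem:con5} to the pair $(Z,W)$, it suffices to check that $\psi$ satisfies
$$
\la x' \, , \, \psi(c)\cdot z' \ra_B =\phi(\la x' \, , \, c\cdot z' \ra_D)
$$
for all $c\in C$ and all $x', z'\in Z$; the uniqueness assertion of that lemma then forces $\psi=f_{(Z,W)}(\phi)$, and since $\phi$ is arbitrary we conclude $f_{(X,Y)}=f_{(Z,W)}$.

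To verify the displayed equation I would write $x'=\Phi(x)$ and $z'=\Phi(z)$ with $x, z\in X$, which is possible because $\Phi|_X$ is a bijection of $X$ onto $Z$. The computation then rests entirely on the intertwining properties of $\Phi$. As a $C$-$D$-equivalence bimodule map, $\Phi$ preserves the left $C$-action and the $D$-valued inner product, so $c\cdot\Phi(z)=\Phi(c\cdot z)$ and $\la\Phi(x)\, ,\,\Phi(c\cdot z)\ra_D=\la x\, ,\, c\cdot z\ra_D$; as an $A$-$B$-equivalence bimodule isomorphism of $X$ onto $Z$ (which is exactly \cite[Lemma 3.2]{Kodaka:Picard2}), $\Phi|_X$ preserves the left $A$-action and the $B$-valued inner product, so $\psi(c)\cdot\Phi(z)=\Phi(\psi(c)\cdot z)$ — here $\psi(c)\cdot z\in X$ since $\psi(c)\in A$ — and $\la\Phi(x)\, ,\,\Phi(\psi(c)\cdot z)\ra_B=\la x\, ,\,\psi(c)\cdot z\ra_B$. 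Substituting, the left-hand side of the equation for $(Z,W)$ becomes $\la x\, ,\,\psi(c)\cdot z\ra_B$ and the right-hand side becomes $\phi(\la x\, ,\, c\cdot z\ra_D)$; these agree precisely because $\psi=f_{(X,Y)}(\phi)$ satisfies the characterization of Lemma \ref{lem:con5} for the pair $(X,Y)$.

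I do not anticipate a genuine obstacle: the argument is bookkeeping. The one thing that needs care is keeping straight that the module actions and inner products appearing are those of the sub-bimodules $X\subseteq Y$ and $Z\subseteq W$, not merely those of the ambient equivalence bimodules, and invoking \cite[Lemma 3.2]{Kodaka:Picard2} so that $\Phi|_X$ is honestly an $A$-$B$-equivalence bimodule isomorphism; once that is in place every step is a direct substitution. An alternative would be to first prove $\Phi\circ\tau_{(X,Y)}=\tau_{(Z,W)}$ for the intermediate maps $\tau$ of Proposition \ref{prop:con4} and then descend to $\psi$, but routing through Lemma \ref{lem:con5} is cleaner.
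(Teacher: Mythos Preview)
Your proof is correct and follows essentially the same approach as the paper: both arguments transport elements of $Z$ back to $X$ via $\Phi^{-1}$, use that $\Phi$ intertwines the $C$-action and preserves the $D$-valued inner product while $\Phi|_X$ (an $A$-$B$-equivalence bimodule isomorphism by \cite[Lemma 3.2]{Kodaka:Picard2}) intertwines the $A$-action and preserves the $B$-valued inner product, and then appeal to the characterization of Lemma~\ref{lem:con5}. The only cosmetic difference is that the paper computes $\la \Phi(x_1),\, f_{(Z,W)}(\phi)(c)\cdot\Phi(x_2)\ra_B$ in two ways and equates, whereas you verify directly that $f_{(X,Y)}(\phi)$ satisfies the defining identity for $f_{(Z,W)}(\phi)$ and invoke uniqueness.
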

\begin{proof} Let $\Phi$ be a $C-D$-equivalence bomodule isomorphism of $Y$ onto $W$
satisfying that $\Phi|_X$ is a bijection of $X$ onto $Z$. Let $\phi\in{}_B\BB_B (D, B)$.
Then for any $x_1 , x_2 \in X$, $c\in C$,
\begin{align*}
\la \Phi(x_1 ) \, , \, f_{(Z, W)}(\phi)(c)\cdot \Phi(x_2 ) \ra_B & =
\phi(\la \Phi(x_1 ) \, , \, c\cdot \Phi(x_2 ) \ra_D \, ) \\
& =\phi(\la \Phi(x_1 ) \, , \, \Phi(c\cdot x_2 ) \ra_D \, ) \\
& =\phi(\la x_1 \, , \, c\cdot x_2 \ra_D \, ) \\
& =\la x_1 \, , \, f_{(X, Y)}(\phi)(c)\cdot x_2 \ra_B .
\end{align*}
On the other hand,
\begin{align*}
\la \Phi(x_1 )\, , \, f_{(Z, W)}(\phi)(c)\cdot \Phi(x_2 ) \ra_B \, & =
\la \Phi(x_1 ) \, , \, \Phi(f_{(Z, W)}(\phi)(c)\cdot x_2 ) \ra_B \\
& =\la x_1 \, , \, f_{(Z, W)}(\phi)(c)\cdot x_2 \ra_B .
\end{align*}
Hence we obtain that
$$
\la x_1 \, , \, f_{(X, Y)}(\phi)(c)\cdot x_2 \ra_B =\la x_1 \, , \, f_{(Z, W)}(\phi)(c)\cdot x_2 \ra_B .
$$
for any $c\in C$, $x_1 , x_2 \in X$.
Therefore, we obtain the conclusion.
\end{proof}

We denote by $f_{[X, Y]}$ the isometric isomorphism of ${}_B \BB_B (D, B)$
into ${}_A \BB_A (C, A)$ induced by the equivalence class $[X, Y]$ of $(X, Y)\in\Equi (A, C, B, D)$.
\par
Let $L\subset M$ be an inclusion of $C^*$-algebras with $\overline{LM}=M$, which is strongy
Morita equivalent to the inclusion $B\subset D$ with respect to a $D-M$-equivalence bomodule $W$ and
its closed subspace $Z$. Then the inclusion $A\subset C$ is strongly Morita equivalent to the
inclusion $L\subset M$ with respect to the $C-M$-equivalence bimodule $Y\otimes_D W$ and its closed
subspace $X\otimes_B Z$.

\begin{lemma}\label{lem:con7} With the above notation,
$$
f_{[X\otimes_B Z \, , \, Y\otimes_D W]}=f_{[X, Y]}\circ f_{[Z, W]} .
$$
\end{lemma}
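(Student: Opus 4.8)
The plan is to reduce everything to the characterization of $f_{(X,Y)}(\phi)$ in Lemma~\ref{lem:con5} and to exploit the uniqueness it provides. Recall first that by Lemma~\ref{lem:con6} the map $f_{[X,Y]}$ equals $f_{(X,Y)}$ for any representative $(X,Y)$, so Lemma~\ref{lem:con5} applies verbatim to $f_{[X,Y]}$, to $f_{[Z,W]}$ and to $f_{[X\otimes_B Z,\,Y\otimes_D W]}$. Fix $\rho\in{}_L\BB_L(M,L)$ and set $\phi=f_{[Z,W]}(\rho)\in{}_B\BB_B(D,B)$ and $\psi=f_{[X,Y]}(\phi)\in{}_A\BB_A(C,A)$. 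By the uniqueness part of Lemma~\ref{lem:con5} applied to the pair $(X\otimes_B Z,\,Y\otimes_D W)\in\Equi(A,C,L,M)$, it suffices to show that $\psi$ satisfies
$$
\la \xi\, ,\, \psi(c)\cdot\eta \ra_L =\rho(\la \xi\, ,\, c\cdot\eta \ra_M)
$$
for all $c\in C$ and $\xi,\eta\in X\otimes_B Z$; this forces $\psi=f_{[X\otimes_B Z,\,Y\otimes_D W]}(\rho)$, and since $\rho$ is arbitrary we obtain $f_{[X,Y]}\circ f_{[Z,W]}=f_{[X\otimes_B Z,\,Y\otimes_D W]}$.

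To verify the displayed equation, I would note that both sides are bounded and sesquilinear in $(\xi,\eta)$ and that the linear span of the elementary tensors $x\otimes z$ with $x\in X$, $z\in Z$ is dense in $X\otimes_B Z$; hence it is enough to check it for $\xi=x_1\otimes z_1$ and $\eta=x_2\otimes z_2$. For such vectors I would compute
$$
\la x_1\otimes z_1\, ,\, \psi(c)\cdot(x_2\otimes z_2) \ra_L
=\la z_1\, ,\, \la x_1\, ,\, \psi(c)\cdot x_2 \ra_B\cdot z_2 \ra_L
$$
from the formula for the $L$-valued inner product on the internal tensor product, then replace $\la x_1\, ,\, \psi(c)\cdot x_2 \ra_B$ by $\phi(\la x_1\, ,\, c\cdot x_2 \ra_D)$ using Lemma~\ref{lem:con5} for $(X,Y)$, then replace $\la z_1\, ,\, \phi(\la x_1\, ,\, c\cdot x_2 \ra_D)\cdot z_2 \ra_L$ by $\rho(\la z_1\, ,\, \la x_1\, ,\, c\cdot x_2 \ra_D\cdot z_2 \ra_M)$ using Lemma~\ref{lem:con5} for $(Z,W)$ with $d=\la x_1\, ,\, c\cdot x_2 \ra_D\in D$, and finally recognise $\la z_1\, ,\, \la x_1\, ,\, c\cdot x_2 \ra_D\cdot z_2 \ra_M=\la x_1\otimes z_1\, ,\,(c\cdot x_2)\otimes z_2 \ra_M=\la x_1\otimes z_1\, ,\, c\cdot(x_2\otimes z_2) \ra_M$. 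Chaining these equalities yields exactly the desired identity on elementary tensors, and then on all of $X\otimes_B Z$ by the density and continuity remark above.

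The only delicate points are bookkeeping ones, and I expect the main (mild) obstacle to lie there rather than in any substantial argument: one must keep straight which algebra each bracket is valued in ($B$ or $D$ at the $X$/$Y$ level, $L$ or $M$ at the $Z$/$W$ level), and one must use that for $x_1,x_2\in X$ the inner product $\la x_1\, ,\, x_2 \ra_D$ computed in $Y$ coincides with $\la x_1\, ,\, x_2 \ra_B$ viewed in $D$ via $B\subset C$... more precisely $B\subset D$, together with the analogous compatibility of the $L$- and $M$-valued inner products on $Z\subset W$. These compatibilities are part of the definition of strong Morita equivalence of inclusions (\cite[Definition~2.1]{KT4:morita}) and of the construction of $X\otimes_B Z$ as a closed subspace of $Y\otimes_D W$. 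Once they are in hand, the computation is a routine chain of substitutions and the uniqueness in Lemma~\ref{lem:con5} closes the argument.
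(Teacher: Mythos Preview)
Your proof is correct and follows essentially the same approach as the paper: both reduce to elementary tensors, unwind the $L$-valued inner product using the nested formula for the tensor product, apply Lemma~\ref{lem:con5} twice (once for $(X,Y)$ and once for $(Z,W)$), and conclude by the uniqueness clause of Lemma~\ref{lem:con5} for the pair $(X\otimes_B Z,\,Y\otimes_D W)$. The paper's computation is identical to yours up to renaming $\rho$ as $\phi$ and not introducing the intermediate names $\phi,\psi$; your explicit mention of the density/continuity reduction to elementary tensors and of the inner-product compatibilities is a harmless elaboration of points the paper leaves tacit.
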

\begin{proof} Let $x_1 , x_2 \in X$ and $z_1 , z_2 \in Z$. Let $c\in C$ and $\phi\in {}_L\BB_L (M, L)$. Then
\begin{align*}
& \la x_1 \otimes z_1 \, , \, (f_{[X, Y]}\circ f_{[Z, W]})(\phi)(c)\cdot x_2 \otimes z_2 \ra_L \\
& =\la z_1 \, , \, \la x_1 \, , \, (f_{[X, Y]}\circ f_{[Z, W]})(\phi)(c)\cdot x_2 \ra_B \cdot z_2 \ra_L \\
& =\la z_1 \, , \, f_{[Z, W]}(\phi)(\la x_1 \, , \, c\cdot x_2 \ra_D \, )\cdot z_2 \ra_L \\
& =\phi(\la z_1 \, , \, \la x_1 \, , \, c\cdot x_2 \ra_D \cdot z_2 \ra_M ) .
\end{align*}
On the other hand,
\begin{align*}
& \la x_1 \otimes z_1 \, , \, (f_{[X\otimes_B Z\, , \, Y\otimes_D W]})(\phi)(c)\cdot x_2 \otimes z_2 \ra_L \\
& = \phi(\la x_1 \otimes z_1 \, , \, c\cdot x_2 \otimes z_2 \ra_L \, ) \\
& =\phi(\la z_1 \, , \, \la x_1 \, , \, c\cdot x_2 \ra_D \cdot z_2 \ra_M ) .
\end{align*}
By Lemma \ref{lem:con5},
$$
(f_{[X, Y]}\circ f_{[Z, W]})(\phi)=f_{[X\otimes_B Z\, , \, Y\otimes_D W]}(\phi)
$$
for any $\phi\in {}_L \BB_L (M, L)$. Therefore, we obtain the conclusion.
\end{proof}

\section{Strong Morita equivalence}\label{sec:SM} Let $A\subset C$ and $B\subset D$ be inclusions
of $C^*$-algebras with $\overline{AC}=C$ and $\overline{BD}=D$. Let $\psi\in {}_A \BB_A (C, A)$ and
$\phi\in {}_B \BB_B (D, B)$.

\begin{Def}\label{def:SM1}
We say that $\psi$ and $\phi$ are
\sl
strongly Morita equivalent
\rm
if there is an element $(X, Y)\in \Equi(A, C, B, D)$ such that $f_{[X, Y]}(\phi)=\psi$.
Also, we say that $\phi$ and $\psi$ are strongly Morita equivalent with respect to
$(X, Y)$ in $\Equi(A, C, B, D)$.
\end{Def}

\begin{remark}\label{remark:SM1-2} By Lemma \ref{lem:con7}, strong Morita equivalence for bimodule linear maps are
equivlence relation.
\end{remark}

Let $\psi\in{}_A \BB_A (C, A)$ and $\phi\in{}_B \BB_B (D, B)$. We suppose that $\psi$ and $\phi$ are strongly
Morita equivalent with respect to $(X, Y)$ in $\Equi(A, C, B, D)$. Let $L_X$ and $L_Y$ be the linking $C^*$-algebras
for $X$ and $Y$, respectively. Then in the same way as in \cite [Section 3]{Kodaka:Picard2} or Brown, Green and
Rieffel \cite [Theorem 1.1]{BGR:linking}, $L_X$ is a $C^*$-subalgebra of $L_Y$ and by easy computations,
$\overline{L_X L_Y}=L_Y$. Furthermore, there are full projections $p, q\in M(L_X )$ with $p+q=1_{M(L_X )}$
satisfying the following conditions:
\begin{align*}
pL_X p & \cong A , \quad pL_Y p \cong C , \\
qL_X q & \cong B , \quad qL_Y q \cong D
\end{align*}
as $C^*$-algebras. We note that $M(L_X )\subset M(L_Y )$ by Pedersen \cite [Section 3.12.12]{Pedersen:auto}
since $\overline{L_X L_Y}=L_Y$. 
\par
Let $\phi$, $\psi$ be as above. We suppose that $\phi$ and $\psi$ are selfadjoint. Let $\tau$ be the unique bounded
linear map from $Y$ to $X$ satisfying Conditions (1)-(6) in Proposition \ref{prop:con4}. Let $\rho$
be the map from $L_Y$ to $L_X$ defined by
$$
\rho(\begin{bmatrix} c & y \\
\widetilde{z} & d \end{bmatrix})
=\begin{bmatrix} \psi(c) & \tau(y) \\
\widetilde{\tau(z)} & \phi(d) \end{bmatrix}
$$
for any $c\in C$, $d\in D$, $y, z\in Y$. By routine computations $\rho$ is a selfadjoint element in
${}_{L_X} \BB_{L_X} (L_Y , L_X )$, where ${}_{L_X} \BB_{L_X} (L_Y , L_X )$ is the space of all bounded
$L_X -L_X$-bimodule linear maps from $L_Y$ to $L_X$. Furthermore, $\rho|_{pL_Y p}=\psi$ and $\rho|_{qL_Y q}=\phi$,
where we identify $A, C$ and $B, D$ with $pL_X p$, $pL_Y p$ and $qL_X q$, $qL_Y q$ in the usual way, respectively.
Thus we obtain the following lemma:

\begin{lemma}\label{lem:SM2} With the above notation, let $\psi\in{}_A \BB_A (C, A)$ and $\phi\in {}_B \BB_B (D, B)$.
We suppose that $\psi$ and $\phi$ are selfadjoint and strongly Morita equivalent with respect to
$(X, Y)\in\Equi(A, C, B, D)$. Then there is a selfadjoint
element $\rho\in{}_{L_X} \BB_{L_X} (L_Y , L_X )$ such that
$$
\rho|_{pL_Y p}=\psi , \quad \rho|_{qL_Y q}=\phi .
$$
\end{lemma}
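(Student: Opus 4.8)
The plan is to verify directly that the map $\rho$ defined in the paragraph preceding Lemma \ref{lem:SM2} has all the asserted properties; the statement essentially asks us to assemble the pieces already constructed. First I would observe that $\rho$ is well defined: an element of $L_Y$ written as a $2\times 2$ matrix $\begin{bmatrix} c & y \\ \widetilde{z} & d \end{bmatrix}$ has its four entries uniquely determined (the diagonal corners are the ``$C$'' and ``$D$'' slots, the off-diagonal are $Y$ and its conjugate), so applying $\psi$, $\tau$, $\tau$, $\phi$ entrywise gives a well-defined element of the matrix form of $L_X$. Linearity of $\rho$ is then immediate from linearity of $\psi$, $\tau$, $\phi$, and boundedness follows from the bounds $\|\psi\|\leq\|\tau\|\leq\|\phi\|$ in Proposition \ref{prop:con4} together with the fact that the norm on a linking algebra is the operator norm on the associated Hilbert module, so a diagonal-block-wise estimate controls $\|\rho\|$.

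Next I would check that $\rho$ is an $L_X$-$L_X$-bimodule map. Here one takes a general element $\ell=\begin{bmatrix} a & x \\ \widetilde{w} & b\end{bmatrix}\in L_X$ (with $a\in A$, $b\in B$, $x,w\in X$) and a general element $m\in L_Y$, multiplies out $\rho(\ell m)$ and $\ell\rho(m)$ as $2\times2$ matrix products, and compares entries. Each entry comparison reduces to one of the six identities in Proposition \ref{prop:con4}: for instance the $(1,1)$-entry of $\rho(\ell m)$ involves terms like $\psi(ac)$, $\tau(a\cdot y)$ paired against $x$, and ${}_A\la\tau(w\cdot d),\dots\ra$-type expressions, all of which are handled by conditions (1)--(3) and the fact that $\psi$, $\phi$ are genuine bimodule maps; the off-diagonal entries use conditions (4)--(6) and Lemma \ref{lem:con3}(2). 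The argument for $\rho(m\ell)=\rho(m)\ell$ is symmetric. This is the step with the most bookkeeping, but it is entirely routine given the inventory of identities already proved, which is presumably why the text says ``by routine computations.''

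For selfadjointness of $\rho$: the involution on a linking algebra sends $\begin{bmatrix} c & y \\ \widetilde{z} & d\end{bmatrix}$ to $\begin{bmatrix} c^* & z \\ \widetilde{y} & d^*\end{bmatrix}$, and ``$\rho$ selfadjoint'' means $\rho(m^*)=\rho(m)^*$. Comparing entries, on the diagonal this is exactly the hypothesis that $\psi$ and $\phi$ are selfadjoint, and on the off-diagonal it amounts to $\tau(y)$ appearing in the position dictated by the involution, which is automatic from the matrix shape; so nothing new is needed beyond the selfadjointness of $\phi$ and $\psi$. Finally, the restriction statements $\rho|_{pL_Y p}=\psi$ and $\rho|_{qL_Y q}=\phi$ follow by noting that under the identifications $pL_Xp\cong A$, $pL_Yp\cong C$ (and similarly for $q$), the corner $pL_Yp$ consists precisely of the matrices supported in the $(1,1)$-slot, on which $\rho$ acts as $\psi$ by definition, and dually for the $(2,2)$-corner. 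I expect the only genuine obstacle — more conceptual than hard — to be making the identifications $M(L_X)\subset M(L_Y)$ and the corner isomorphisms precise enough that ``$\rho|_{pL_Yp}=\psi$'' is a literal equality rather than an equality up to isomorphism; this is already set up in the preceding paragraph via the full projections $p,q\in M(L_X)$, so it suffices to invoke that setup. I would conclude by collecting these verifications into the statement of Lemma \ref{lem:SM2}.
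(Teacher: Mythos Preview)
Your proposal is correct and follows precisely the paper's own approach: the paragraph immediately preceding Lemma \ref{lem:SM2} defines $\rho$ entrywise by $\rho\bigl(\begin{smallmatrix} c & y \\ \widetilde{z} & d \end{smallmatrix}\bigr)=\begin{smallmatrix} \psi(c) & \tau(y) \\ \widetilde{\tau(z)} & \phi(d) \end{smallmatrix}$ and states that ``by routine computations'' $\rho$ is a selfadjoint $L_X$-$L_X$-bimodule map with the required corner restrictions, the lemma itself merely recording the conclusion. Your write-up simply unpacks those routine computations using the identities of Proposition \ref{prop:con4} and the selfadjointness hypotheses, which is exactly what is intended.
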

Also, we have the converse direction:

\begin{lemma}\label{lem:SM3} Let $A\subset C$ and $B\subset D$ be as above and let $\psi\in{}_A \BB_A (C, A)$
and $\phi\in{}_B \BB_B (D, B)$ be selfadjoint elements. We suppose that there are an inclusion $K\subset L$ of
$C^*$-algebras with $\overline{KL}=L$ and full projections $p, q\in M(K)$ with $p+q=1_{M(K)}$ such that
$$
A\cong pKp , \quad C\cong pLp, \quad B\cong qKq , \quad D\cong qLq ,
$$
as $C^*$-algebras. Also, we suppose that there is a selfadjoint element $\rho$ in ${}_K \BB_K (L, K)$ such that
$$
\rho|_{pLp}=\psi , \quad \rho|_{qLq}=\phi .
$$
Then $\phi$ and $\psi$ are strongly Morita equivalent, where we identify $pKp$, $pLp$ and $qKq$, $qLq$ with
$A, C$ and $B, D$, respectively.
\end{lemma}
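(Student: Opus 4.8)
The plan is to manufacture the required pair $(X,Y)\in\Equi(A,C,B,D)$ directly from the projections $p$ and $q$, reversing the linking-algebra construction that precedes Lemma \ref{lem:SM2}. First I would record some preliminaries about the projections. Since $\overline{KL}=L$ (hence also $\overline{LK}=L$), we have $M(K)\subset M(L)$ by Pedersen \cite[Section 3.12.12]{Pedersen:auto}, and $1_{M(K)}$ coincides with $1_{M(L)}$ under this inclusion (because $1_{M(K)}$ acts as the identity on $\overline{KL}=L$), so $p+q=1_{M(L)}$. Moreover $\overline{KpK}=K$ together with $\overline{KL}=L=\overline{LK}$ forces $\overline{LpL}=L$, and likewise $\overline{LqL}=L$, so $p$ and $q$ remain full in $M(L)$. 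Now set $X:=pKq$ and $Y:=pLq$. Under the given identifications of $pKp,pLp,qKq,qLq$ with $A,C,B,D$, the space $Y$ is the standard corner $C-D$-equivalence bimodule, with module actions and both inner products given by multiplication in $L$, and $X$ is a norm-closed subspace of $Y$ which is itself the corner $A-B$-equivalence bimodule, with module actions and inner products given by multiplication in $K$. Because every operation on $X$ and $Y$ is just a product inside $L\supseteq K$, Conditions (1) and (2) of \cite[Definition 2.1]{KT4:morita} hold automatically, so $(X,Y)\in\Equi(A,C,B,D)$. This step, which absorbs essentially all of the bookkeeping in the proof, is nothing but \cite[Theorem 1.1]{BGR:linking} and \cite[Section 3]{Kodaka:Picard2} read in the reverse direction.

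It then remains to verify $f_{[X,Y]}(\phi)=\psi$, and here I would use the characterization from Lemma \ref{lem:con5}: $f_{[X,Y]}(\phi)$ is the unique linear map from $C$ to $A$ with $\la x\,,\,f_{[X,Y]}(\phi)(c)\cdot z\ra_B=\phi(\la x\,,\,c\cdot z\ra_D)$ for all $c\in C$ and $x,z\in X$. Writing the inner products and module actions as products in $L$, the left-hand side with $\psi$ substituted for $f_{[X,Y]}(\phi)$ is $x^*\psi(c)z$, an element of $qKp\cdot pKp\cdot pKq\subseteq B$, while the right-hand side is $\phi(x^*cz)$, where $x^*cz\in qKp\cdot pLp\cdot pKq\subseteq D$.

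The crucial point is that although $\rho$ is merely a $K-K$-bimodule map, the factors $x^*$ and $z$ lie in $K$ (in $qKp$ and $pKq$). Hence, using $\phi=\rho|_{qLq}$, $\psi=\rho|_{pLp}$ and the $K-K$-bimodule property of $\rho$,
$$
\phi(x^*cz)=\rho(x^*cz)=x^*\rho(c)z=x^*\psi(c)z=\la x\,,\,\psi(c)\cdot z\ra_B .
$$
Thus $\psi$ satisfies the defining equation in Lemma \ref{lem:con5}, so $f_{[X,Y]}(\phi)=\psi$, i.e.\ $\phi$ and $\psi$ are strongly Morita equivalent with respect to $(X,Y)$. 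I expect the only genuine obstacle to be the verification in the first paragraph that $(pKq,pLq)$ really does lie in $\Equi(A,C,B,D)$; once that is secured, the equality $f_{[X,Y]}(\phi)=\psi$ falls out of the bimodule property of $\rho$ through the short computation above.
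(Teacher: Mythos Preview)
Your proof is correct and in substance coincides with the paper's: both build the equivalence bimodule from the corner $pLq$ (with closed subspace $pKq$) and then use only the $K$--$K$-bimodule property of $\rho$ together with Lemma~\ref{lem:con5}. The paper organizes the same computation slightly differently: it first shows $f_{[Kp,Lp]}(\psi)=\rho$ and $f_{[Kq,Lq]}(\phi)=\rho$ (so both $\psi$ and $\phi$ are strongly Morita equivalent to the single linking map $\rho$), and then composes via Lemma~\ref{lem:con7} to get $f_{[qKp,\,qLp]}(\psi)=\phi$, whereas you go directly to $f_{[pKq,\,pLq]}(\phi)=\psi$. Your route is a little more economical; the paper's has the minor conceptual advantage of exhibiting $\rho$ itself as the common ``link'' between $\psi$ and $\phi$. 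Neither argument actually uses the selfadjointness hypothesis, which is only needed for the converse direction in Lemma~\ref{lem:SM2}.
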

\begin{proof} We note that $(Kp, Lp )\in\Equi(K, L, A, C)$, where we identify $A$ and $C$ with $pKp$ and $pLp$,
respectively. By routine computations, we can see that
$$
\la kp \, , \, \rho(l)\cdot k_1 p \ra_A =\psi(\la kp \, , \, l\cdot k_1 p \ra_C )
$$
for any $k, k_1 \in K$, $l\in L$. Thus by Lemma \ref{lem:con5},
$f_{[Kp \, , \, Lp ]}(\psi)=\rho$. Similarly, $f_{[Kq \, , \, Lq]}(\phi)=\rho$.
Since $f_{[Kq \, , \, Lq ]}^{-1}(\rho)=\phi$, 
$$
(f_{[Kq \, ,\, Lq]}^{-1} \circ f_{[Kp \, , \, Lp ]})(\psi)=\phi .
$$
Since $f_{[Kq \,  ,\, Lq ]}^{-1}=f_{[qK \, , \, qL ]}$, by Lemma \ref{lem:con7}
$$
\phi=f_{[qK \, , \, qL][Kp \, , \, Lp]}(\rho)=f_{[qKp \, ,\, qLp]}(\psi) .
$$
Therefore, we obtain the conclusion.
\end{proof}

\begin{prop}\label{prop:SM4} Let $A\subset C$ and $B\subset D$ be inclusions of $C^*$-algebras
with $\overline{AC}=C$ and $\overline{BD}=D$. Let $\psi$ and $\phi$ be selfadjoint elements in
${}_A \BB_A (C, A)$ and ${}_B \BB_B (D, B)$, respectively. Then the following conditions are equivalent:
\newline
$(1)$ $\psi$ and $\phi$ are strongly Morita equivalent,
\newline
$(2)$ There are an inclusion $K\subset L$ of $C^*$-algebras with $\overline{KL}=L$, full projections
$p, q \in M(K)$ with $p+q=1_{M(K)}$ and a selfadjoint element $\rho \in {}_K \BB_K (L, K)$
satisfying that
$$
A\cong pKp, \quad C\cong pLp, \quad B\cong qKq, \quad D\cong qLq, 
$$
as $C^*$-algebras and that
$$
\rho|_{pLp}=\psi, \quad \rho|_{qLq}=\phi ,
$$
where we identify $pKp$, $pLp$ and $qKq$, $qLq$ with $A$, $C$ and $B$, $D$, respectively.
\end{prop}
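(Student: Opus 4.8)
The plan is to observe that this proposition is essentially a repackaging of Lemmas \ref{lem:SM2} and \ref{lem:SM3}, so the proof reduces to invoking each of them in one of the two directions; there is no new construction to perform.

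For the implication $(1)\Rightarrow(2)$, I would appeal to Lemma \ref{lem:SM2}. Assuming $\psi$ and $\phi$ are strongly Morita equivalent with respect to some $(X,Y)\in\Equi(A,C,B,D)$, I would take $K=L_X$ and $L=L_Y$, the linking $C^*$-algebras of $X$ and $Y$. The discussion preceding Lemma \ref{lem:SM2} already records that $L_X$ is a $C^*$-subalgebra of $L_Y$ with $\overline{L_X L_Y}=L_Y$, and exhibits full projections $p,q\in M(L_X)$ with $p+q=1_{M(L_X)}$ satisfying $pL_Xp\cong A$, $pL_Yp\cong C$, $qL_Xq\cong B$, $qL_Yq\cong D$ as $C^*$-algebras. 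The selfadjoint element $\rho\in{}_{L_X}\BB_{L_X}(L_Y,L_X)$ built there from $\psi$, $\phi$ and the map $\tau$ of Proposition \ref{prop:con4} then satisfies $\rho|_{pL_Yp}=\psi$ and $\rho|_{qL_Yq}=\phi$, which is exactly condition $(2)$. For the reverse implication $(2)\Rightarrow(1)$, I would apply Lemma \ref{lem:SM3} directly: given the inclusion $K\subset L$ with $\overline{KL}=L$, the full projections $p,q\in M(K)$ with $p+q=1_{M(K)}$, and the selfadjoint $\rho\in{}_K\BB_K(L,K)$ with $\rho|_{pLp}=\psi$ and $\rho|_{qLq}=\phi$, the conclusion of that lemma is precisely that $\phi$ and $\psi$ are strongly Morita equivalent.

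Since the substantive content lies entirely inside the two lemmas, I do not expect a genuine obstacle at the level of the proposition. The only point worth a careful second look is the part of Lemma \ref{lem:SM2} dispatched as "routine computations", namely that $\rho$ really is a bounded $L_X$–$L_X$-bimodule linear map — this uses that $\psi$ and $\phi$ are bimodule maps together with the intertwining identities $\tau(c\cdot x)=\psi(c)\cdot x$, $\tau(a\cdot y)=a\cdot\tau(y)$, $\tau(x\cdot d)=x\cdot\phi(d)$, $\tau(y\cdot b)=\tau(y)\cdot b$ of Proposition \ref{prop:con4} — and that $\rho$ is selfadjoint, which follows from $\psi^*=\psi$, $\phi^*=\phi$ and the compatibility $\la x,\tau(y)\ra_B=\phi(\la x,y\ra_D)$ and its left-hand analogue ${}_A\la\tau(y),x\ra=\psi({}_C\la y,x\ra)$. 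Granting those verifications, the equivalence of $(1)$ and $(2)$ is immediate from Lemmas \ref{lem:SM2} and \ref{lem:SM3}.
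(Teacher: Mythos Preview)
Your proposal is correct and follows exactly the paper's own approach: the proposition is obtained immediately from Lemmas \ref{lem:SM2} and \ref{lem:SM3}, one for each direction. The additional remarks you make about verifying that $\rho$ is a selfadjoint $L_X$--$L_X$-bimodule map are appropriate caution but lie inside Lemma \ref{lem:SM2} rather than in the proof of the proposition itself.
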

\begin{proof} This is immediate by Lemmas \ref{lem:SM2} and \ref{lem:SM3}.
\end{proof}

\section{Stable $C^*$-algebras and matrix algebras}\label{sec:St} Let $A\subset C$ be an inclusion of $C^*$-algebras
with $\overline{AC}=C$. Let $A^s =A\otimes\BK$ and $C^s =C\otimes\BK$.
Let $\{e_{ij}\}_{i, j=1}^{\infty}$ be a system of matrix units of $\BK$. Clearly $A^s \subset C^s$ and
$A\subset C$ are strongly Morita equivalent with respect to the $C^s -C$-equivalence bimodule
$C^s (1_{M(A)}\otimes e_{11})$ and its closed subspace $A^s (1_{M(A)}\otimes e_{11})$, where we
identify $A$ and $C$ with $(1\otimes e_{11})A^s (1\otimes e_{11})$ and $(1\otimes e_{11})C^s (1\otimes e_{11})$,
respectively.

\begin{lemma}\label{lem:St1} With the above notation, for any $\phi\in{}_A \BB_A (C, A)$, 
$$
f_{[A^s (1\otimes e_{11}) \, , \, C^s (1\otimes e_{11})]}(\phi)=\phi\otimes\id_{\BK} .
$$
\end{lemma}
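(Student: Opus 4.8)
The plan is to use the uniqueness characterization of $f_{[X,Y]}(\phi)$ provided by Lemma \ref{lem:con5}, applied to the specific equivalence bimodule $Y = C^s(1\otimes e_{11})$ and its closed subspace $X = A^s(1\otimes e_{11})$. Concretely, it suffices to show that the map $\phi\otimes\id_{\BK}$ from $C^s = C\otimes\BK$ to $A^s = A\otimes\BK$ satisfies the defining equation
$$
\la x \, , \, (\phi\otimes\id_{\BK})(c)\cdot z \ra_B =\phi(\la x \, , \, c\cdot z \ra_D )
$$
for all $c\in C^s$ and $x,z\in X = A^s(1\otimes e_{11})$, where here the roles of the earlier $B$ and $D$ are played by $A$ and $C$ (under the identifications $A\cong(1\otimes e_{11})A^s(1\otimes e_{11})$ and $C\cong(1\otimes e_{11})C^s(1\otimes e_{11})$). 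Once this identity is verified, Lemma \ref{lem:con5} forces $f_{[A^s(1\otimes e_{11}),\,C^s(1\otimes e_{11})]}(\phi) = \phi\otimes\id_{\BK}$, since $f_{[X,Y]}(\phi)$ is the \emph{unique} linear map from $C^s$ to $A^s$ satisfying that equation.

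The key computation is to unwind both inner products. First I would record the module structure: an element of $X = A^s(1\otimes e_{11})$ has the form $a(1\otimes e_{11}) = \sum_k a_k\otimes e_{k1}$ with $a_k\in A$, and the right $A$-valued inner product on $Y = C^s(1\otimes e_{11})$ is $\la \xi,\eta\ra = (1\otimes e_{11})\xi^*\eta(1\otimes e_{11})$ (computed in $C^s$, landing in $(1\otimes e_{11})C^s(1\otimes e_{11})\cong C$), while the left $C^s$-action is just left multiplication in $C^s$. Then for $x = \sum_i a_i\otimes e_{i1}$, $z = \sum_j b_j\otimes e_{j1}$ in $X$ and $c = \sum_{m,n}c_{mn}\otimes e_{mn}$ in $C^s$, one computes $\la x, c\cdot z\ra_D = \sum_{i,j}a_i^* c_{ij} b_j \otimes e_{11}$ (identified with an element of $C$), so that $\phi(\la x,c\cdot z\ra_D) = \sum_{i,j}\phi(a_i^* c_{ij} b_j)\otimes e_{11}$. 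On the other side, $(\phi\otimes\id_{\BK})(c) = \sum_{m,n}\phi(c_{mn})\otimes e_{mn}$, and a parallel computation gives $\la x, (\phi\otimes\id_{\BK})(c)\cdot z\ra_B = \sum_{i,j}a_i^*\phi(c_{ij})b_j\otimes e_{11}$. Since $\phi$ is an $A$–$A$-bimodule map, $a_i^*\phi(c_{ij})b_j = \phi(a_i^* c_{ij} b_j)$, and the two expressions agree. Boundedness of $\phi$ and a density argument then extend the identity from finite sums to all of $C^s$ and $X$.

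The main obstacle — and really the only point requiring care — is bookkeeping: keeping the two sets of identifications straight (which copy of $A$, which copy of $C$, which $e_{ij}$ slot) and confirming that the inner products on $Y$ and $X$ inherited from $C^s$ and $A^s$ are exactly the ones making the Lemma \ref{lem:con5} hypothesis applicable, rather than a rescaled or transposed version. I expect no genuine analytic difficulty; once the matrix-unit formulas for the module actions and inner products are written down correctly, the verification of the defining equation is the bimodule-map property of $\phi$ applied entrywise, and the conclusion is immediate from the uniqueness clause of Lemma \ref{lem:con5}.
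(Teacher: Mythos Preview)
Your proposal is correct and follows essentially the same approach as the paper: reduce to the uniqueness clause of Lemma~\ref{lem:con5} and verify the defining identity using the $A$--$A$-bimodule property of $\phi$. The paper's computation is slightly more streamlined in that it works directly with general $a,b\in A^s$ and $c\in C^s$ (using that $\phi\otimes\id_{\BK}$ is an $A^s$--$A^s$-bimodule map, so $(1\otimes e_{11})a^*(\phi\otimes\id_{\BK})(c)b(1\otimes e_{11})=(\phi\otimes\id_{\BK})((1\otimes e_{11})a^*cb(1\otimes e_{11}))$, which equals $\phi$ of the same element under the identification of $C$ with $(1\otimes e_{11})C^s(1\otimes e_{11})$), thereby avoiding your matrix-unit expansion and density step; but the underlying idea is identical.
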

\begin{proof} It suffices to show that
$$
\la a(1\otimes e_{11}) \, , \, (\phi\otimes\id_{\BK})(c)\cdot b(1\otimes e_{11}) \ra_A
=\phi(\la a(1\otimes e_{11}) \, , \, c\cdot b(1\otimes e_{11}) \ra_C )
$$
for any $a, b\in A^s$, $c\in C^s$ by Lemma \ref{lem:con5}. Indeed, for any
$a, b\in A^s$, $c\in C^s$,
\begin{align*}
\la a(1\otimes e_{11})\, , \, (\phi\otimes\id_{\BK})(c)\cdot b(1\otimes e_{11}) \ra_A & =
(1\otimes e_{11})a^* (\phi\otimes\id_{\BK})(c)b(1\otimes e_{11}) \\
& =(\phi\otimes\id_{\BK})((1\otimes e_{11})a^* cb(1\otimes e_{11})) .
\end{align*}
On the other hand,
$$
\phi(\la a(1\otimes e_{11}) \, , \, c\cdot b(1\otimes e_{11}) \ra_C )
=\phi((1\otimes e_{11})a^* cb(1\otimes e_{11})) .
$$
Since we identify $C$ with $(1\otimes e_{11})C^s (1\otimes e_{11})$,
$$
\la a(1\otimes e_{11})\, , \, (\phi\otimes\id_{\BK})(c)\cdot b(1\otimes e_{11}) \ra_A
=\phi(\la a(1\otimes e_{11}) \, , \, c\cdot b(1\otimes e_{11}) \ra_C )
$$
for any $a, b\in A^s$, $c\in C^s$. Therefore, we obtain the conclusion.
\end{proof}

Let $\psi\in{}_A \BB_A (C, A)$. Let $\{u_{\lambda}\}_{\lambda\in\Lambda}$ be an
approximate unit of $A^s$ with $||u_{\lambda}||\leq 1$ for any $\lambda\in\Lambda$.
Since $\overline{AC}=C$, $\{u_{\lambda}\}_{\lambda\in\Lambda}$ is an approximate unit
of $C^s$. Let $c$ be any element in $M(C)$. For any $a\in A$, $\{a\psi(cu_{\lambda})\}_{\lambda\in\Lambda}$
and $\{\psi(cu_{\lambda})a\}_{\lambda\in\Lambda}$ are Cauchy nets in $A$.
Hence there is an element $x\in M(A)$ such that
$\{\psi(cu_{\lambda})\}_{\lambda\in\Lambda}$ is strictly convergent to $x\in M(A)$.
Let $\underline{\psi}$ be the map from $M(C)$ to $M(A)$ defined by
$\underline{\psi}(c)=x$ for any $c\in C$. By routine computations $\underline{\psi}$ is a bounded
$M(A)-M(A)$-bimodule linear map from $M(C)$ to $M(A)$ and $\psi=\underline{\psi}|_C$.
We note that $\underline{\psi}$ is independent of the choice of approximate unit of $A^s$.
\par
Let $q$ be a full projection in $M(A)$, that is, $\overline{AqA}=A$. Since $\overline{AC}=C$,
$M(A)\subset M(C)$ by \cite [Section 3.12.12]{Pedersen:auto}. Thus
$$
\overline{CqC}=\overline{CAqAC}=\overline{CAC}=C .
$$
We regard $qC$ and $qA$ as a $qCq-C$-equivalence bimodule and a $qAq- A$-equivalence
bimodule, respectively.
Then $(qA, qC)\in\Equi(qAq, qCq, A, C)$.

\begin{lemma}\label{lem:St2} With the above notation, for any $\psi\in {}_A \BB_A (C, A)$
$$
f_{[qA, qC]}(\psi)=\psi|_{qCq} .
$$
\end{lemma}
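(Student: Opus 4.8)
The plan is to use the characterization of $f_{[qA, qC]}(\psi)$ provided by Lemma~\ref{lem:con5}. In the notation of Section~\ref{sec:con}, the inclusions $A\subset C$ and $B\subset D$ are here specialized to $qAq\subset qCq$ and $A\subset C$, the $C-D$-equivalence bimodule to the $qCq-C$-equivalence bimodule $qC$, and its closed subspace to the $qAq-A$-equivalence bimodule $qA$; thus $f_{[qA, qC]}$ maps ${}_A \BB_A (C, A)$ onto ${}_{qAq} \BB_{qAq} (qCq, qAq)$, and $f_{[qA, qC]}(\psi)$ is, by Lemma~\ref{lem:con5}, the unique linear map $g$ from $qCq$ to $qAq$ satisfying
$$
\la x\, , \, g(c')\cdot z \ra_A =\psi(\la x\, , \, c'\cdot z \ra_C )
$$
for all $c'\in qCq$ and $x, z\in qA$. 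I would therefore first check that $\psi|_{qCq}$ is a legitimate candidate, i.e. that it maps $qCq$ into $qAq$, and then verify that it satisfies this relation; the conclusion then follows from the uniqueness clause of Lemma~\ref{lem:con5}.

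For the first point, since $q\in M(A)\subset M(C)$ and the bounded $M(A)-M(A)$-bimodule linear extension $\underline{\psi}\colon M(C)\to M(A)$ of $\psi$ satisfies $\underline{\psi}|_C=\psi$, for any $c'\in qCq$ we have $c'=qc'q$ and hence
$$
\psi(c')=\underline{\psi}(qc'q)=q\,\underline{\psi}(c')\,q=q\psi(c')q\in qAq .
$$
Alternatively one can run the same computation with an approximate unit of $A$, which is also an approximate unit of $C$ since $\overline{AC}=C$. That $\psi|_{qCq}$ is $qAq-qAq$-bimodule linear is immediate from the $A-A$-bimodule linearity of $\psi$.

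For the relation itself, I would compute both sides directly. On the $qAq-A$-equivalence bimodule $qA$ the right $A$-valued inner product is $\la\xi, \eta\ra_A=\xi^*\eta$, on the $qCq-C$-equivalence bimodule $qC$ the right $C$-valued inner product is $\la\xi, \eta\ra_C=\xi^*\eta$, and in both cases the left action is multiplication. Writing $x=qa_1$ and $z=qa_2$ with $a_1, a_2\in A$, and using $qc'q=c'$ and $q\psi(c')q=\psi(c')$, the left-hand side becomes
$$
\la qa_1 \, , \, \psi(c')\cdot qa_2 \ra_A =a_1^* q\,\psi(c')\,q\, a_2 =a_1^* \psi(c')\, a_2 ,
$$
while the right-hand side becomes
$$
\psi(\la qa_1 \, , \, c'\cdot qa_2 \ra_C )=\psi(a_1^* q c' q a_2 )=\psi(a_1^* c' a_2 )=a_1^* \psi(c')\, a_2 ,
$$
the last equality again using the $A-A$-bimodule linearity of $\psi$. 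Hence the two sides agree, and Lemma~\ref{lem:con5} gives $f_{[qA, qC]}(\psi)=\psi|_{qCq}$. The only slightly delicate point is the bookkeeping with the various corners — keeping track of which algebra plays which role in $\Equi(qAq, qCq, A, C)$, and of the fact that $q\in M(A)$ so that $qA\subset qC$, $qCq\subset C$, etc. — together with the preliminary check that $\psi$ carries $qCq$ into $qAq$; everything else is routine.
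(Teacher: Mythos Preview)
Your proof is correct and follows essentially the same approach as the paper's: both verify the characterizing identity of Lemma~\ref{lem:con5} for $\psi|_{qCq}$ and invoke uniqueness. You are simply more explicit than the paper, which compresses the whole argument into the remark that the identity holds ``since $\underline{\psi}(q)=q$''; your preliminary check that $\psi(qCq)\subset qAq$ via $\underline{\psi}$ is exactly what underlies that remark.
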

\begin{proof} By easy computations, we see that
$$
\la qx \, , \, \psi|_{qCq}(c)\cdot qz \ra_A =\psi(\la qx \, , \, c\cdot qz \ra_C )
$$
for any $x, z \in A$, $c\in C$ since $\underline{\psi}(q)=q$. Thus we obtain the conclusion
by Lemma \ref{lem:con5}.
\end{proof}

Let $A\subset C$ and $B\subset D$ be inclusions of $C^*$-algebras such that
$A$ and $B$ are $\sigma$-unital and $\overline{AC}=C$ and $\overline{BD}=D$.
Let $B^s =B\otimes\BK$ and $D^s =D\otimes\BK$. We suppose that $A\subset C$ and
$B\subset D$ are strongly Morita equivalent with respect to $(X, Y)\in\Equi (A, C, B, D)$.
Let $X^s =X\otimes\BK$ and $Y^s =Y\otimes \BK$, an $A^s -B^s$-equivalence bimodule and
a $C^s -D^s$-equivalence bimodule, respectively. We note that $(X^s \, , \, Y^s )\in \Equi(A^s , C^s , B^s , D^s )$.
Let $L_{X^s}$ and $L_{Y^s}$ be the linking $C^*$-algebras for $X^s$ and $Y^s$, respectively.
Let
$$
p_1 =\begin{bmatrix} 1_{M(A^s )} & 0 \\
0 & 0 \end{bmatrix} , \quad
p_2 =\begin{bmatrix} 0 & 0 \\
0 & 1_{M(B^s )} \end{bmatrix} .
$$
Then $p_1$ and $p_2$ are full projections in $M(L_{X^s})$. By easy computations, we can see that
$\overline{L_{X^s}L_{Y^s}}=L_{Y^s}$. Hence by \cite [Section 3.12.12]{Pedersen:auto}, $M(L_{X^s})\subset M(L_{Y^s})$.
Since $p_1$ and $p_2$ are full projections in $M(L_X )$, by Brown \cite [Lemma 2.5]{Brown:hereditary}, there
is a partial isometry $w\in M(L_{X^s} )$ such that $w^* w=p_1$, $ww^* =p_2$. We note that $w\in M(L_{Y^s} )$.
Let $\Psi$ be the map from $p_2 L_{Y^s}p_2$ to $p_1 L_{Y^s}p_1$ defined by
$$
\Psi(\begin{bmatrix} 0 & 0 \\
0 & d \end{bmatrix})=w^* \begin{bmatrix} 0 & 0 \\
0 & d \end{bmatrix}w
$$
for any $d\in D^s$. In the same way as in the discussions of \cite {BGR:linking}, $\Psi$ is an isomorphism
of $p_2 L_{Y^s}p_2 $ onto $p_1 L_{Y^s}p_1$ and $\Psi|_{p_2 L_{X^s}p_2}$ is an isomorphism
of $p_2 L_{X^s}p_2$ onto $p_1 L_{X^s }p_1$. Also, we note the following:
\begin{align*}
p_1 L_{Y^s}p_1 \cong C^s & , \quad p_1 L_{X^s}p_1 \cong A^s \\
p_2 L_{Y^s}p_2 \cong D^s & , \quad p_2 L_{X^s}p_2 \cong B^s 
\end{align*}
as $C^*$-algebras. We identify $A^s$, $C^s$ and $B^s$, $D^s$ with $p_1 L_{X^s}p_1$,
$p_1 L_{Y^s}p_1$ and $p_2 L_{X^s}p_2$, $p_2 L_{Y^s}p_2$, respectively. Also,
we identify $X^s$, $Y^s$ with $p_1 L_{X^s}p_2$, $p_1 L_{Y^s}p_2$.
\par
Let $A_{\Psi}^s$ be the $A^s -B^s$-equivalence bimodule induced by $\Psi|_{B^s}$, that is,
$A_{\Psi}^s =A^s$ as $\BC$-vector spaces. The left $A^s$-action and the $A^s$-valued inner product on
$A_{\Psi}^s$ are defined in the usual way. The right $B^s$-action and the right $B^s$-valued inner product on
$A_{\Psi}^s$ are defined as follows: For any $x, y\in A_{\Psi}^s $, $b\in B^s$,
$$
x\cdot b= x\Psi(b), \quad \la x, y \ra_{B^s}=\Psi^{-1}(x^* y) .
$$
Similarly, we define the $C^s -D^s$-equivalence bimodule $C_{\Psi}^s $ induced by $\Psi$.
We note that $A_{\Psi}^s$ is a closed subspace of $C_{\Psi}^s$ and $(A_{\Psi}^s \, , \, C_{\Psi}^s )
\in\Equi(A^s , C^s , B^s , D^s )$.

\begin{lemma}\label{lem:St3} With the above notation, $(A_{\Psi}^s , C_{\Psi}^s )$ is
equivalent to $(X^s , Y^s )$ in $\Equi(A^s , C^s , B^s , D^s )$.
\end{lemma}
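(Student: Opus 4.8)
The plan is to exhibit an explicit $C^s$-$D^s$-equivalence bimodule isomorphism $\Phi$ of $Y^s$ onto $C_{\Psi}^s$ whose restriction to $X^s$ is a bijection onto $A_{\Psi}^s$; by the definition of ``$\sim$'' in $\Equi(A^s , C^s , B^s , D^s )$, together with \cite[Lemma 3.2]{Kodaka:Picard2}, this yields the assertion. Using the identifications $Y^s =p_1 L_{Y^s}p_2$, $C_{\Psi}^s =C^s =p_1 L_{Y^s}p_1$ and recalling that $w=p_2 w p_1 \in M(L_{X^s})\subset M(L_{Y^s})$ with $w^* w=p_1$, $ww^* =p_2$, I would define
$$
\Phi(\eta)=\eta w \qquad (\eta\in Y^s ).
$$
Since $\eta =\eta p_2$ we have $\eta w\in p_1 L_{Y^s}p_1 =C_{\Psi}^s$, so $\Phi$ is a well-defined map of $Y^s$ into $C_{\Psi}^s$.

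First I would check that $\Phi$ is a homomorphism of $C^s$-$D^s$-equivalence bimodules. Left $C^s$-linearity, $\Phi(c\cdot\eta)=c\eta w=c\cdot\Phi(\eta)$, is immediate. For the right $D^s$-action, which on $C_{\Psi}^s$ is $\xi\cdot d=\xi\Psi(d)=\xi w^* dw$, one gets $\Phi(\eta\cdot d)=\eta dw=\eta ww^* dw=(\eta w)\Psi(d)=\Phi(\eta)\cdot d$ using $d=p_2 d$ and $ww^* =p_2$. Preservation of the left inner product,
$$
{}_{C^s}\la \Phi(\eta_1 ),\Phi(\eta_2 )\ra =\eta_1 ww^* \eta_2^* =\eta_1 \eta_2^* ={}_{C^s}\la \eta_1 ,\eta_2 \ra ,
$$
follows from $ww^* =p_2$ and $\eta_2^* =p_2 \eta_2^*$, and preservation of the right inner product,
$$
\la \Phi(\eta_1 ),\Phi(\eta_2 )\ra_{D^s} =\Psi^{-1}(w^* \eta_1^* \eta_2 w)=ww^* \eta_1^* \eta_2 ww^* =\eta_1^* \eta_2 =\la \eta_1 ,\eta_2 \ra_{D^s} ,
$$
follows from $\Psi^{-1}(\,\cdot\,)=w(\cdot)w^*$ and $ww^* =p_2$. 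Finally, $\Phi$ is injective because $\eta w=0$ forces $\eta p_2 =\eta ww^* =0$, i.e.\ $\eta =0$, and surjective because for $\xi\in p_1 L_{Y^s}p_1$ one has $\xi w^* \in p_1 L_{Y^s}p_2 =Y^s$ with $\Phi(\xi w^* )=\xi w^* w=\xi p_1 =\xi$; hence $\Phi$ is a $C^s$-$D^s$-equivalence bimodule isomorphism of $Y^s$ onto $C_{\Psi}^s$.

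It then remains to observe that $\Phi|_{X^s}$ is a bijection of $X^s =p_1 L_{X^s}p_2$ onto $A_{\Psi}^s =p_1 L_{X^s}p_1$: since both $w$ and $w^*$ lie in $M(L_{X^s})$, the map $\eta\mapsto\eta w$ sends $p_1 L_{X^s}p_2$ into $p_1 L_{X^s}p_1$ and its inverse $\xi\mapsto\xi w^*$ sends $p_1 L_{X^s}p_1$ into $p_1 L_{X^s}p_2$. Thus $(X^s ,Y^s )\sim(A_{\Psi}^s ,C_{\Psi}^s )$ in $\Equi(A^s ,C^s ,B^s ,D^s )$, which is the assertion. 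I do not expect a serious obstacle in carrying this out; the points requiring care are the bookkeeping with the various identifications and checking compatibility of $\Phi$ with the \emph{twisted} right $D^s$-action and right $D^s$-valued inner product on $C_{\Psi}^s$ coming from $\Psi$, while the restriction property to $X^s$ hinges on $w$ being a multiplier of $L_{X^s}$ and not merely of $L_{Y^s}$.
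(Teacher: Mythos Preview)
Your proof is correct and follows essentially the same approach as the paper: the paper defines $\pi(y)=\begin{bmatrix}0&y\\0&0\end{bmatrix}w$, which under the identification $Y^s=p_1 L_{Y^s}p_2$ is exactly your map $\Phi(\eta)=\eta w$, and then simply asserts that the required bimodule isomorphism properties and the restriction property hold ``by routine computations,'' whereas you have written these computations out explicitly.
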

\begin{proof} We can prove this lemma in the same way as in the proof of \cite [Lemma 3.3]{BGR:linking}.
Indeed, let $\pi$ be the map from $Y^s$ to $C_{\Psi}^s$ defined by
$$
\pi(y)=\begin{bmatrix} 0 & y \\0 & 0 \end{bmatrix}w
$$
for any $y\in Y^s$. By routine computations, $\pi$ is a $C^s- D^s$-equivalence bimodule isomorphism
of $Y^s$ onto $C_{\Psi}^s$ and $\pi|_{X^s}$ is a bijection from $X^s$ onto $A^s$. Hence by
\cite [Lemma 3.2]{Kodaka:Picard2}, we obtain the conclusion.
\end{proof}
\begin{lemma}\label{lem:St4} With the above notation, for any $\phi\in{}_{B^s} \BB_{B^s} (D^s , B^s )$,
$$
f_{[X^s , Y^s ]}(\phi)=\Psi\circ\phi\circ\Psi^{-1} \in {}_{A^s}\BB_{A^s}(C^s , A^s ).
$$
\end{lemma}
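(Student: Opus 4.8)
The plan is to reduce everything to the characterization of $f$ given in Lemma~\ref{lem:con5}, after first trading the pair $(X^s,Y^s)$ for the more explicit pair $(A_\Psi^s,C_\Psi^s)$. By Lemma~\ref{lem:St3}, $(A_\Psi^s,C_\Psi^s)$ is equivalent to $(X^s,Y^s)$ in $\Equi(A^s,C^s,B^s,D^s)$, so Lemma~\ref{lem:con6} gives $f_{[X^s,Y^s]}=f_{[A_\Psi^s,C_\Psi^s]}$. Set $\psi:=\Psi\circ\phi\circ\Psi^{-1}$. Since $\Psi$ and $\Psi^{-1}$ are isometric $*$-isomorphisms carrying $B^s$ onto $A^s$ (and $D^s$ onto $C^s$) and $\phi$ is a bounded $B^s$-$B^s$-bimodule map, $\psi$ is a bounded linear map from $C^s$ to $A^s$; moreover for $a,a'\in A^s$, $c\in C^s$ one computes $\psi(aca')=\Psi(\phi(\Psi^{-1}(a)\Psi^{-1}(c)\Psi^{-1}(a')))=\Psi(\Psi^{-1}(a)\phi(\Psi^{-1}(c))\Psi^{-1}(a'))=a\psi(c)a'$, using $\Psi^{-1}(a),\Psi^{-1}(a')\in B^s$, so that $\psi\in{}_{A^s}\BB_{A^s}(C^s,A^s)$. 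Thus, by Lemma~\ref{lem:con5}, it remains to verify that $\psi$ satisfies $\la x,\psi(c)\cdot z\ra_{B^s}=\phi(\la x,c\cdot z\ra_{D^s})$ for all $c\in C^s$ and $x,z\in A_\Psi^s$.

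To do this I would unwind the twisted structures. On $A_\Psi^s$ the left $A^s$-action is ordinary multiplication and $\la x,y\ra_{B^s}=\Psi^{-1}(x^*y)$; likewise on $C_\Psi^s$ the left $C^s$-action is ordinary multiplication and $\la x,y\ra_{D^s}=\Psi^{-1}(x^*y)$. Hence the left-hand side equals $\Psi^{-1}(x^*\Psi(\phi(\Psi^{-1}(c)))z)$ and the right-hand side equals $\phi(\Psi^{-1}(x^*cz))$. Because $\Psi^{-1}$ is a $*$-homomorphism on $p_1L_{Y^s}p_1$ that restricts to one on $p_1L_{X^s}p_1$, and since $x^*,z\in A^s$ give $\Psi^{-1}(x^*),\Psi^{-1}(z)\in B^s$ while $\Psi^{-1}(c)\in D^s$, the left-hand side rewrites as $\Psi^{-1}(x)^*\,\phi(\Psi^{-1}(c))\,\Psi^{-1}(z)$ and the right-hand side as $\phi(\Psi^{-1}(x)^*\Psi^{-1}(c)\Psi^{-1}(z))$; these agree precisely because $\phi$ is a $B^s$-$B^s$-bimodule map. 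That closes the argument.

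There is also an alternative that bypasses Lemma~\ref{lem:St3}: work directly with $X^s=p_1L_{X^s}p_2\subset Y^s=p_1L_{Y^s}p_2$ and check the same identity of Lemma~\ref{lem:con5} there, computing $\la x,\psi(c)\cdot z\ra_{B^s}=x^*\Psi(\phi(\Psi^{-1}(c)))z=(wx)^*\phi(wcw^*)(wz)=\phi((wx)^*(wcw^*)(wz))=\phi(x^*cz)=\phi(\la x,c\cdot z\ra_{D^s})$ for $x,z\in X^s$, $c\in C^s$, where $wx,wz\in B^s$, $\phi$ is used as a bimodule map, and $w^*w=p_1$ absorbs the leftover projections. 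Either way there is no conceptual obstacle: the only real work, and hence the main source of friction, is careful bookkeeping of the linking-algebra identifications $A^s\cong p_1L_{X^s}p_1$, $C^s\cong p_1L_{Y^s}p_1$, $B^s\cong p_2L_{X^s}p_2$, $D^s\cong p_2L_{Y^s}p_2$ together with the twisted left and right actions and inner products on $A_\Psi^s$ and $C_\Psi^s$.
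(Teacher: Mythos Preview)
Your proposal is correct and follows essentially the same route as the paper: verify the identity of Lemma~\ref{lem:con5} on the explicit pair $(A_\Psi^s, C_\Psi^s)$ by unwinding the twisted inner products, then transfer to $(X^s, Y^s)$ via Lemmas~\ref{lem:St3} and~\ref{lem:con6}. The paper carries out exactly this computation (with the variable labeled $d$ rather than $c$), and your explicit check that $\Psi\circ\phi\circ\Psi^{-1}$ is an $A^s$-$A^s$-bimodule map, together with the alternative direct argument on $X^s$, are harmless additions.
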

\begin{proof} We claim that
$$
\la x \, , \, (\Psi\circ\phi\circ\Psi^{-1})(d)\cdot z \ra_{B^s}=\phi(\la x, d\cdot z \ra_{D^s})
$$
for any $\phi\in{}_{B^s} \BB_{B^s}(D^s \, , \, B^s )$, $x, z \in A_{\Psi}^s$, $d\in D^s$.
Indeed,
\begin{align*}
\la x \, , (\Psi\circ\phi\circ\Psi^{-1})(d)\cdot z \ra_{B^s} & =\Psi^{-1}(x^*(\Psi\circ\phi\circ\Psi^{-1})(d)z ) \\
&=\Psi^{-1}(x^* )(\phi\circ\Psi^{-1})(d)\Psi^{-1}(z) .
\end{align*}
On the other hand,
\begin{align*}
\phi(\la x, d\cdot z \ra_{D^s}) & =\phi(\Psi^{-1}(x^* dz))=\phi(\Psi^{-1}(x^* )\Psi^{-1}(d)\Psi^{-1}(z)) \\
& =\Psi^{-1}(x^* )(\phi\circ\Psi^{-1})(d)\Psi^{-1}(z)
\end{align*}
since $\Psi^{-1}(x^* )$, $\Psi^{-1}(z)\in B^s$. Thus
$$
\la x \, , (\Psi\circ\phi\circ\Psi^{-1})(d)\cdot z \ra_{B^s} =\phi(\la x, d\cdot z \ra_{D^s}) 
$$
for any $\phi\in{}_{B^s} \BB_{B^s}(D^s , B^s )$, $x, z \in A_{\Psi}^s$, $d\in D^s$.
Hence by Lemma \ref{lem:con5}, $f_{[A_{\Psi}^s , C_{\Psi}^s ]}(\phi)=\Psi\circ\phi\circ\Psi^{-1}$
for any $\phi\in {}_{B^s} \BB_{B^s}(D^s ,B^s )$. Therefore,
$f_{[X^s , Y^s ]}(\phi)=\Psi\circ\phi\circ\Psi^{-1} $ by Lemmas \ref{lem:con6} and \ref{lem:St3}.
\end{proof}

Let $\underline{\Psi}$ be the strictly continuous isomorphism of $M(D^s )$ onto $M(C^s )$
extending $\Psi$ to $M(D^s )$, which is defined in Jensen and Thomsen \cite [Corollary 1.1.15]{JT:KK}.
Then $\underline{\Psi}|_{M(B^s )}$ is an isomorphism of $M(B^s )$ onto $M(A^s )$.
Let $q=\underline{\Psi}(1\otimes e_{11})$. Then $q$ is a full projection in $M(A^s )$
with $\overline{C^s q C^s }=C^s$ and $qA^s q \cong A$, $qC^s q \cong C$ as $C^*$-algebras.
We identify with $qA^s q$ and $qC^s q$ with $A$ and $C$, respectively.
Then we obtain the following proposition:

\begin{prop}\label{prop:St5} Let $A\subset C$ and $B\subset D$ be inclusions of
$C^*$-algebras such that $A$ and $B$ are $\sigma$-unital and 
$\overline{AC}=C$ and $\overline{BD}=D$. Let $\Psi$ be the isomorphism of $D^s$
onto $C^s$ defined before Lemma \ref{lem:St3} and let $q=\Psi(1\otimes e_{11})$.
Let $(X, Y)\in\Equi(A, C, B, D)$.
For any $\phi\in{}_B \BB_B (D, B)$,
$$
f_{[X, Y]}(\phi)=(\Psi\circ (\phi\otimes\id_{\BK})\circ\Psi^{-1})|_{qC^s q} ,
$$
where we identify $qA^s q$ and $qC^s q$ with $A$ and $C$, respectively.
\end{prop}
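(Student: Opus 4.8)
The plan is to factor the isometric isomorphism $f_{[X,Y]}$ through two successive stabilizations followed by a corner reduction, compute the factored map explicitly using the lemmas of this section, and then reassemble it by means of Lemma~\ref{lem:con7} so that Lemma~\ref{lem:con6} identifies it with $f_{[X,Y]}$.

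More precisely, applying Lemma~\ref{lem:St1} to the inclusion $B\subset D$ gives $f_{[B^s(1\otimes e_{11}),D^s(1\otimes e_{11})]}(\phi)=\phi\otimes\id_{\BK}$; Lemma~\ref{lem:St4} gives $f_{[X^s,Y^s]}(\phi\otimes\id_{\BK})=\Psi\circ(\phi\otimes\id_{\BK})\circ\Psi^{-1}$; and Lemma~\ref{lem:St2}, applied to the inclusion $A^s\subset C^s$ with the full projection $q\in M(A^s)$, gives $f_{[qA^s,qC^s]}(\psi)=\psi|_{qC^sq}$. Hence the composition $f_{[qA^s,qC^s]}\circ f_{[X^s,Y^s]}\circ f_{[B^s(1\otimes e_{11}),D^s(1\otimes e_{11})]}$ sends $\phi\in{}_B\BB_B(D,B)$ to $(\Psi\circ(\phi\otimes\id_{\BK})\circ\Psi^{-1})|_{qC^sq}$, which is exactly the right-hand side of the assertion once $qA^sq$ and $qC^sq$ are identified with $A$ and $C$. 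On the other hand, two applications of Lemma~\ref{lem:con7} show that this same composition equals $f_{[E,F]}$, where
$$
E=qA^s\otimes_{A^s}X^s\otimes_{B^s}B^s(1\otimes e_{11}),\qquad
F=qC^s\otimes_{C^s}Y^s\otimes_{D^s}D^s(1\otimes e_{11}).
$$
Thus it remains only to show that $(E,F)\sim(X,Y)$ in $\Equi(A,C,B,D)$, for then $f_{[E,F]}=f_{[X,Y]}$ by Lemma~\ref{lem:con6}, which is the claim.

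To prove $(E,F)\sim(X,Y)$ I would first collapse the internal balanced tensor products, identifying $E$ with $qX^s(1\otimes e_{11})$ and $F$ with $qY^s(1\otimes e_{11})$ as $A$--$B$ and $C$--$D$ equivalence bimodules. Since $A$, and hence also $C$, is $\sigma$-unital, $A^s$ and $C^s$ are stable, so by \cite[Lemma 2.5]{Brown:hereditary} there is a partial isometry $v\in M(A^s)$ with $v^*v=q$ and $vv^*=1\otimes e_{11}$; this $v$ is what implements the identifications $qA^sq\cong A$ and $qC^sq\cong C$, namely $a\mapsto vav^*$. Left multiplication by $v$ then gives a $C^s$--$D^s$ equivalence bimodule isomorphism of $qY^s(1\otimes e_{11})$ onto $(1\otimes e_{11})Y^s(1\otimes e_{11})=Y$ which restricts to a bijection of $qX^s(1\otimes e_{11})$ onto $(1\otimes e_{11})X^s(1\otimes e_{11})=X$ and intertwines the actions and inner products in the required way (here one uses that $1_{M(A)}$ and $1_{M(B)}$ act as identities on $C$, $Y$ and on $D$, $X$ respectively, because $\overline{AC}=C$ and $\overline{BD}=D$). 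This exhibits the desired equivalence.

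The computations in the second paragraph are routine given the cited lemmas, and the verification that $v\cdot$ is an equivalence-bimodule isomorphism is a standard linking-algebra manipulation. The part that needs genuine care is the bookkeeping in the last step: one has to keep track of the two incarnations of $B$ and $D$ (the original algebras and the corners $(1\otimes e_{11})B^s(1\otimes e_{11})$, $(1\otimes e_{11})D^s(1\otimes e_{11})$), of the several embeddings of $X\subset Y$ into $X^s\subset Y^s$ and into $L_{X^s}\subset L_{Y^s}$, and in particular one must make sure that the identification of $qA^sq$ with $A$ used in the statement is precisely the one coming from Brown's partial isometry $v$, so that $E$ becomes $X$ on the nose rather than a twist of it. An alternative that bypasses Lemma~\ref{lem:con7} is to take the explicit formula $(\Psi\circ(\phi\otimes\id_{\BK})\circ\Psi^{-1})|_{qC^sq}$ and verify directly, via the uniqueness clause of Lemma~\ref{lem:con5}, that $\la x,(\Psi\circ(\phi\otimes\id_{\BK})\circ\Psi^{-1})(c)\cdot z\ra_B=\phi(\la x,c\cdot z\ra_D)$ for all $x,z\in X$ and $c\in C$, unwinding both sides inside $L_{X^s}$ and $L_{Y^s}$; this trades the bimodule argument for a longer but entirely mechanical computation.
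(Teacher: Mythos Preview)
Your proposal is correct and follows essentially the same route as the paper. The paper's proof is precisely: assert that $[qA^s\otimes_{A^s}X^s\otimes_{B^s}B^s(1\otimes e_{11}),\,qC^s\otimes_{C^s}Y^s\otimes_{D^s}D^s(1\otimes e_{11})]=[X,Y]$, invoke Lemma~\ref{lem:con7} to factor $f_{[X,Y]}$ as $f_{[qA^s,qC^s]}\circ f_{[X^s,Y^s]}\circ f_{[B^s(1\otimes e_{11}),D^s(1\otimes e_{11})]}$, and then apply Lemmas~\ref{lem:St1}, \ref{lem:St2}, \ref{lem:St4}. The only difference is that the paper simply states the bimodule equivalence $[E,F]=[X,Y]$ without justification, whereas you spell out how to obtain it via Brown's partial isometry; your caution about the bookkeeping of the identification $qA^sq\cong A$ is warranted but does not reflect a divergence in strategy.
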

\begin{proof} We note that $(1\otimes e_{11})B^s (1\otimes e_{11})$ and $(1\otimes e_{11})D^s (1\otimes e_{11})$
are identified with $B$ and $D$, respectively. Also, we identify $qA^s q$ and $qC^s q$ with $A$ and $C$,
respectively. Thus we see that
$$
[qA^s \otimes_{A^s}X^s \otimes_{B^s}B^s (1\otimes e_{11}) \, , \,
qC^s \otimes_{C^s}Y^s \otimes_{D^s}D^s (1\otimes e_{11})]=[X, Y]
$$
in $\Equi(A, C, B, D)/\!\sim$. Hence by Lemma \ref{lem:con7},
$$
f_{[X, Y]}(\phi)=(f_{[qA^s \, , \, qC^s ]}\circ f_{[X^s \, , \, Y^s]}\circ
f_{[B^s (1\otimes e_{11}) \, , \, D^s (1\otimes e_{11})]})(\phi) .
$$
Therefore, by Lemmas \ref{lem:St1}, \ref{lem:St2} and \ref{lem:St4},
$$
f_{[X, Y]}(\phi)=(\Psi\circ (\phi\otimes\id_{\BK})\circ\Psi^{-1})|_{qC^s q} .
$$
\end{proof}

Next, we consider the case that $C^*$-algebras are unital $C^*$-algebras. 
Let $A\subset C$ and $B\subset D$ be unital inclusions of unital $C^*$-algebras, which are
strongly Morita equivalent with respect to a $C-D$-equivalence bimodule $Y$ and its
closed subspace $X$. By \cite [Section 2]{KT4:morita}, there are a positive integer $n$ and a full
projection $p\in M_n (B)$ such that
$$
A\cong pM_n (B)p , \quad C\cong pM_n (D)p
$$
as $C^*$-algebras and such that
$$
X\cong pM_n (B)(1\otimes e), \quad Y\cong pM_n (D)(1\otimes e )
$$
as $A-B$-equivalence bimodules and $C-D$-equivalence bimodules, respectively, where
$$
e=\begin{bmatrix} 1 & 0 & \cdots & 0 \\
0 & 0 & \cdots & 0 \\
\vdots & \vdots & \ddots & \vdots \\
0 & 0 & \cdots & 0 \end{bmatrix}\in M_n (\BC)
$$
and we identify $A$, $C$ and $B$, $D$ with $pM_n (B)p$, $pM_n (D)p$ and
$(1\otimes e)M_n (B)(1\otimes e)$, $(1\otimes e)M_n (D)(1\otimes e)$, respectively.
We denote the above isomorphisms by
\begin{align*}
\Psi_A & : A\longrightarrow pM_n (B)p , \\
\Psi_C & : C\longrightarrow pM_n (D)p , \\
\Psi_X & : X\longrightarrow pM_n (B)(1\otimes e) , \\
\Psi_Y & : Y\longrightarrow pM_n (D)(1\otimes e),
\end{align*}
respectively. Then we have the same results as in \cite [Lemma 2.6 and Corollary 2.7]{KT4:morita}.
First, we construct a map from ${}_B \BB_B (D, B)$ to the space of all $pM_n (B)p-pM_n (B)$-bimodule
maps, ${}_{pM_n (B)p}\BB_{pM_n (B)p}(pM_n(D)p \, , \, pM_n (B)p)$. Let $\phi\in {}_B \BB_B (D, B)$.
Let $\psi$ be the map from $M_n (D)$ to $M_n (B)$ defined by
$$
\psi(x)=(\phi\otimes\id_{M_n (\BC)})(x)
$$
for any $x\in M_n (D)$. Since $\psi(p)=p$, by easy computations, $\psi$ can be regarded as
an element in ${}_{pM_n (B)p}\BB_{pM_n (B)p}(pM_n(D)p \, , \, pM_n (B)p)$. We denote
by $F$ the map
$$
\phi\in{}_B \BB_B (D, B)\mapsto \psi\in{}_{pM_n (B)p}\BB_{pM_n (B)p}(pM_n (D)p\, , \, pM_n (B)p)
$$
as above.

\begin{remark}\label{remark:St6} We note that the inclusion of unital $C^*$-algebras $B\subset D$ is
strongly Morita equivalent to $pM_n (B)p\subset pM_n (D)p$ with respect to the $pM_n (D)p-D$-equivalence
bimodule $pM_n (D)(1\otimes e)$ and its closed subspace $pM_n (B)(1\otimes e)$,
where we identify $B$ and $D$ with $(1\otimes e)M_n (B)(1\otimes e)$ and $(1\otimes)M_n (D)(1\otimes e)$,
respectively.
\end{remark}

\begin{lemma}\label{lem:St7} With the above notation, let $\phi\in {}_B \BB_B (D, B)$. Then for any $d\in M_n (D)$,
$x, y\in M_n (B)$,
$$
\la px(1\otimes e) \, , \, F(\phi)(pdp)\cdot pz(1\otimes e) \ra_B 
=\phi(\la px(1\otimes e) \, , \, pdp\cdot pz(1\otimes e) \ra_D \, ) .
$$
\end{lemma}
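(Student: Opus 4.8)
The plan is to verify the identity by expanding both sides directly, using the concrete description of the equivalence bimodules $X\cong pM_n(B)(1\otimes e)$ and $Y\cong pM_n(D)(1\otimes e)$ together with the $M_n(B)$-bimodule linearity of $\phi\otimes\id_{M_n(\BC)}$. Recall that on $pM_n(B)(1\otimes e)$, regarded as a $pM_n(B)p$-$B$-equivalence bimodule with $B$ identified with $(1\otimes e)M_n(B)(1\otimes e)$, the left action is left multiplication and the right $B$-valued inner product is $\la\xi,\eta\ra_B=\xi^*\eta$; likewise on $pM_n(D)(1\otimes e)$ over $pM_n(D)p$ with $D$ identified with $(1\otimes e)M_n(D)(1\otimes e)$. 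Recall also that $F(\phi)(pdp)=\psi(pdp)=(\phi\otimes\id_{M_n(\BC)})(pdp)$, which, since $\phi\otimes\id_{M_n(\BC)}$ is $M_n(B)$-bimodule linear and $p\in M_n(B)$, equals $p(\phi\otimes\id_{M_n(\BC)})(d)p$; in particular it lies in $pM_n(B)p$, so that $F(\phi)(pdp)=pF(\phi)(pdp)p$.

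First I would compute the left-hand side. Writing out the inner product, using that the left $pM_n(B)p$-action is multiplication and using $p^2=p$ together with $F(\phi)(pdp)=pF(\phi)(pdp)p$, the left-hand side becomes $(1\otimes e)x^*(\phi\otimes\id_{M_n(\BC)})(pdp)z(1\otimes e)$. Since $x$, $z$ and $1\otimes e$ all lie in $M_n(B)$, I can bring them inside $\phi\otimes\id_{M_n(\BC)}$ by its $M_n(B)$-bimodule linearity, so the left-hand side equals $(\phi\otimes\id_{M_n(\BC)})\bigl((1\otimes e)x^*pdpz(1\otimes e)\bigr)$.

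Next I would compute the right-hand side the same way. The left action of $pM_n(D)p$ being multiplication and the $D$-valued inner product being $\xi^*\eta$ give $\la px(1\otimes e),pdp\cdot pz(1\otimes e)\ra_D=(1\otimes e)x^*pdpz(1\otimes e)\in(1\otimes e)M_n(D)(1\otimes e)=D$. Under the identification of $D$ with $(1\otimes e)M_n(D)(1\otimes e)$, the map $\phi$ is exactly the restriction of $\phi\otimes\id_{M_n(\BC)}$ to $(1\otimes e)M_n(D)(1\otimes e)$, so the right-hand side equals $(\phi\otimes\id_{M_n(\BC)})\bigl((1\otimes e)x^*pdpz(1\otimes e)\bigr)$, which coincides with the expression obtained for the left-hand side. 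This proves the claimed equality (and exhibits it in precisely the form needed later to apply Lemma \ref{lem:con5} to the pair $(pM_n(B)(1\otimes e),pM_n(D)(1\otimes e))$).

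The computation is entirely routine, so the only point requiring care is the bookkeeping: keeping straight the several identifications — $A,C,B,D$ with the corners $pM_n(B)p$, $pM_n(D)p$, $(1\otimes e)M_n(B)(1\otimes e)$, $(1\otimes e)M_n(D)(1\otimes e)$; $X,Y$ with the standard ``columns'' $pM_n(B)(1\otimes e)$, $pM_n(D)(1\otimes e)$; and $\phi$ with the restriction of $\phi\otimes\id_{M_n(\BC)}$ — and making sure that each insertion or cancellation of $p$ or $1\otimes e$ is justified by bimodule linearity of $\phi\otimes\id_{M_n(\BC)}$ over the full matrix algebra $M_n(B)$ rather than over a smaller subalgebra.
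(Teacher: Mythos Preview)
Your proof is correct and follows essentially the same approach as the paper's own proof: both expand the two sides explicitly using that the inner products are $\xi^*\eta$ and the actions are multiplication, obtaining $(1\otimes e)x^* p(\phi\otimes\id)(d)pz(1\otimes e)$ for the left-hand side and $\phi((1\otimes e)x^* pdpz(1\otimes e))$ for the right-hand side, and then conclude equality via the identification of $D$ with $(1\otimes e)M_n(D)(1\otimes e)$ (equivalently, the $M_n(B)$-bimodule linearity of $\phi\otimes\id$). Your write-up is somewhat more explicit about the bimodule-linearity step and the bookkeeping of identifications, but the argument is the same.
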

\begin{proof} This can be proved by routine computations. Indeed, for any $d\in M_n (D)$, $x, y\in M_n (B)$,
$$
\la px(1\otimes e) \, , \, F(\phi)(pdp)\cdot pz(1\otimes e) \ra_B
=(1\otimes e)x^* p(\phi\otimes\id)(d)pz(1\otimes e)
$$
On the other hand,
$$
\phi(\la px(1\otimes e) \, , \, pdp\cdot pz(1\otimes e)\ra_D \ )
=\phi((1\otimes e)x^* pdpz(1\otimes e)) .
$$
Since we identify $D$ with $(1\otimes e)M_n (D)(1\otimes e)$,
$$
(1\otimes e)x^* p(\phi\otimes\id)(d)pz(1\otimes e)=\phi((1\otimes e)x^* pdpz(1\otimes e)) .
$$
Thus we obtain the conclusion.
\end{proof}

\begin{lemma}\label{lem:St8} With the above notation, for any $\phi\in {}_B \BB_B (D, B)$,
$$
f_{[X, Y]}(\phi)=\Psi_A^{-1}\circ F(\phi)\circ\Psi_C .
$$
\end{lemma}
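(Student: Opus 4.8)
The plan is to identify $f_{[X,Y]}(\phi)$ by means of its characterization in Lemma \ref{lem:con5}. Since $(X,Y)\in\Equi(A,C,B,D)$ we have $f_{[X,Y]}=f_{(X,Y)}$, so it suffices to show that the linear map $\Psi_A^{-1}\circ F(\phi)\circ\Psi_C$ from $C$ to $A$ satisfies
$$
\la x\,,\,(\Psi_A^{-1}\circ F(\phi)\circ\Psi_C)(c)\cdot z \ra_B =\phi(\la x\,,\,c\cdot z \ra_D )
$$
for all $c\in C$ and $x,z\in X$; the uniqueness part of Lemma \ref{lem:con5} then finishes the argument.

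First I would record the compatibility of the four isomorphisms $\Psi_A,\Psi_C,\Psi_X,\Psi_Y$, which is precisely the content of the ``same results as in \cite[Lemma 2.6 and Corollary 2.7]{KT4:morita}'' quoted above: $\Psi_Y$ restricts to $\Psi_X$ on $X$; the pair $(\Psi_C,\Psi_Y)$ intertwines the left $C$-action and the $D$-valued inner product on $Y$ with those on $pM_n(D)(1\otimes e)$ over $(1\otimes e)M_n(D)(1\otimes e)\cong D$; and the pair $(\Psi_A,\Psi_X)$ intertwines the left $A$-action and the $B$-valued inner product on $X$ with those on $pM_n(B)(1\otimes e)$ over $(1\otimes e)M_n(B)(1\otimes e)\cong B$. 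Writing $\Psi_X(x)=px_0(1\otimes e)$ and $\Psi_X(z)=pz_0(1\otimes e)$ with $x_0,z_0\in M_n(B)$, and $\Psi_C(c)=pd_0p$ with $d_0\in M_n(D)$, these compatibilities give $\la x\,,\,c\cdot z \ra_D=\la px_0(1\otimes e)\,,\,pd_0p\cdot pz_0(1\otimes e) \ra_D$ and, for every $a\in A$, $\la x\,,\,a\cdot z \ra_B=\la px_0(1\otimes e)\,,\,\Psi_A(a)\cdot pz_0(1\otimes e) \ra_B$.

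Next I would run the substitution through Lemma \ref{lem:St7}: applying $\phi$ and then that lemma,
$$
\phi(\la x\,,\,c\cdot z \ra_D )=\phi(\la px_0(1\otimes e)\,,\,pd_0p\cdot pz_0(1\otimes e) \ra_D )=\la px_0(1\otimes e)\,,\,F(\phi)(pd_0p)\cdot pz_0(1\otimes e) \ra_B .
$$
Now $F(\phi)(pd_0p)=F(\phi)(\Psi_C(c))$ lies in $pM_n(B)p$, so it equals $\Psi_A(a)$ with $a=(\Psi_A^{-1}\circ F(\phi)\circ\Psi_C)(c)$; feeding this into the second compatibility identity turns the right-hand side into $\la x\,,\,(\Psi_A^{-1}\circ F(\phi)\circ\Psi_C)(c)\cdot z \ra_B$. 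This is exactly the identity required by Lemma \ref{lem:con5}, so $f_{[X,Y]}(\phi)=\Psi_A^{-1}\circ F(\phi)\circ\Psi_C$.

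I expect the only real difficulty to be organizational rather than mathematical: one must keep the silent identifications of $B$ with $(1\otimes e)M_n(B)(1\otimes e)$ and of $D$ with $(1\otimes e)M_n(D)(1\otimes e)$ straight, so that the map $\phi$ on $D$ coincides with the map acting on $(1\otimes e)M_n(D)(1\otimes e)$ that appears in Lemma \ref{lem:St7}, and one must verify (or cite) that $\Psi_Y$ really does restrict to $\Psi_X$ and that $(\Psi_A,\Psi_X)$, $(\Psi_C,\Psi_Y)$ are equivalence bimodule isomorphisms. After that the whole computation is the chain of equalities above, with no estimates or limits involved.
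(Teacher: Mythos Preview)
Your proposal is correct and follows essentially the same route as the paper: both reduce to the characterizing identity of Lemma~\ref{lem:con5}, invoke the compatibility of the isomorphisms $\Psi_A,\Psi_C,\Psi_X,\Psi_Y$ from \cite[Lemma 2.6 and Corollary 2.7]{KT4:morita}, and feed through Lemma~\ref{lem:St7}. The only cosmetic difference is that the paper runs the chain of equalities starting from $\la x,(\Psi_A^{-1}\circ F(\phi)\circ\Psi_C)(c)\cdot z\ra_B$ and ending at $\phi(\la x,c\cdot z\ra_D)$, whereas you traverse it in the opposite direction.
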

\begin{proof} By Lemma \ref{lem:con5}, it suffices to show that
$$
\la x \, , \, (\Psi_A^{-1}\circ F(\phi)\circ\Psi_C )(c)\cdot z \ra_B =\phi(\la x \, , \, c\cdot z \ra_D \, )
$$
for any $c\in C$, $x, z\in X$. Indeed, for any $c\in C$, $x, z\in X$,
\begin{align*}
\la x \, , \, (\Psi_A^{-1}\circ F(\phi)\circ\Psi_C )(c)\cdot z \ra_B
& =\la \Psi_X (x) \, , \, \Psi_X ((\Psi_A^{-1}\circ F(\phi)\circ\Psi_C )(c)\cdot z) \ra_B \\
& (\text{by the same result as \cite [Lemma 2.6 (3)]{KT4:morita}}) \\
& =\la \Psi_X (x) \, , \, (F(\phi)\circ\Psi_C )(c)\cdot \Psi_X (z) \ra_B \\
& (\text{by the same result as \cite [Lemma 2.6 (2)]{KT4:morita}}) \\
& =\phi(\la \Psi_X (x) \, , \, \Psi_C (c)\cdot \Psi_X (z) \ra_D ) \quad (\text{by Lemma \ref{lem:St7}}) \\
& =\phi(\la \Psi_X (x) \, , \, \Psi_X (c\cdot z)\ra_D \, ) \\
& (\text{by the same result as \cite [Corollary 2.7 (2), (5)]{KT4:morita}}) \\
& =\phi(\la x \, , \, c\cdot z \ra_D \, ) \\
& (\text{by the same result as \cite [Corollary 2.7 (3)]{KT4:morita}}) .
\end{align*}
Therefore, we obtain the conclusion.
\end{proof}

\section{Basic properties}\label{sec:BP}
Let $A\subset C$ and $B\subset D$ be inclusions of $C^*$-algebras
with $\overline{AC}=C$ and $\overline{BD}=D$.
We suppose that they are strongly Morita equivalent with respect to $(X, Y)\in\Equi(A, C, B, D)$.
Let ${}_A \BB_A (C, A)$ and ${}_B \BB_B (D, B)$
be as above and let $f_{[X, Y]}$ be the isometric isomorphism of ${}_B \BB_B (D, B)$ onto
${}_A \BB_A (C, A)$ induced by $(X, Y)\in\Equi(A, C, B, D)$ which is defined in Section \ref{sec:con}.
In this section, we give basic properties about $f_{[X, Y]}$.

\begin{lemma}\label{lem:BP1} With the above notation, we have the following:
\newline
$(1)$ For any selfadjoint linear map $\phi\in {}_B \BB_B (D, B)$, $f_{[X, Y]}(\phi)$ is selfadjoint.
\newline
$(2)$ For any positive linear map $\phi\in{}_B \BB_B (D, B)$, $f_{[X, Y]}(\phi)$ is positive.
\end{lemma}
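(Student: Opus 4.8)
The plan is to deduce both parts from the uniqueness characterization of $f_{[X,Y]}(\phi)$ recorded in Lemma~\ref{lem:con5}. Write $\psi:=f_{[X,Y]}(\phi)$; then $\psi$ is the unique linear map from $C$ to $A$ with $\la x\,,\,\psi(c)\cdot z\ra_B=\phi(\la x\,,\,c\cdot z\ra_D)$ for all $c\in C$ and $x,z\in X$. Besides this I would use only the equivalence-bimodule axioms: that the left $A$-action on the right Hilbert $B$-module $X$ is by adjointable operators, so $\la a\cdot x\,,\,z\ra_B=\la x\,,\,a^*\cdot z\ra_B$; that the left $C$-action on $Y$ is a $*$-representation, so $\la c\cdot y\,,\,y'\ra_D=\la y\,,\,c^*\cdot y'\ra_D$; that $A\cong\BK_B(X)$ acts faithfully on $X$; and $\la u\,,\,v\ra^*=\la v\,,\,u\ra$ for the relevant inner products.

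For $(1)$, assume $\phi(d^*)=\phi(d)^*$ for all $d\in D$ and set $\psi'(c):=\psi(c^*)^*$, a linear map from $C$ to $A$. The main step is to check that $\psi'$ satisfies the defining identity of Lemma~\ref{lem:con5}. A short computation gives, for $c\in C$ and $x,z\in X$,
\[
\la x\,,\,\psi'(c)\cdot z\ra_B=\la\psi(c^*)\cdot x\,,\,z\ra_B=\bigl(\la z\,,\,\psi(c^*)\cdot x\ra_B\bigr)^*=\bigl(\phi(\la z\,,\,c^*\cdot x\ra_D)\bigr)^* ,
\]
and then selfadjointness of $\phi$ followed by $\la c^*\cdot x\,,\,z\ra_D=\la x\,,\,c\cdot z\ra_D$ rewrites the last term as $\phi(\la x\,,\,c\cdot z\ra_D)$. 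By the uniqueness in Lemma~\ref{lem:con5}, $\psi'=\psi$, i.e. $\psi(c^*)=\psi(c)^*$ for all $c\in C$, so $f_{[X,Y]}(\phi)$ is selfadjoint.

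For $(2)$, recall first that a positive linear map between $C^*$-algebras is automatically selfadjoint, so by $(1)$ the map $\psi$ is selfadjoint; it remains to show $\psi(c)\geq 0$ in $A$ whenever $c\geq 0$ in $C$. Fix such a $c$ and write $c=c^{1/2}c^{1/2}$ with $c^{1/2}=(c^{1/2})^*\in C$. Taking $z=x$ in Lemma~\ref{lem:con5} and using the $*$-representation property of the $C$-action on $Y$,
\[
\la x\,,\,\psi(c)\cdot x\ra_B=\phi(\la x\,,\,c\cdot x\ra_D)=\phi(\la c^{1/2}\cdot x\,,\,c^{1/2}\cdot x\ra_D)\geq 0
\]
in $B$ for every $x\in X$, the inequality by positivity of $\phi$. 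Thus $\psi(c)$ is a selfadjoint element of $A\cong\BK_B(X)$ with $\la x\,,\,\psi(c)\cdot x\ra_B\geq 0$ for all $x\in X$; applying this with $x$ replaced by $\psi(c)_-^{1/2}\cdot x$ gives $-\la\psi(c)_-\cdot x\,,\,\psi(c)_-\cdot x\ra_B\geq 0$, hence $\psi(c)_-\cdot x=0$ for all $x$, hence $\psi(c)_-=0$ by faithfulness of the $A$-action on $X$; so $\psi(c)=\psi(c)_+\geq 0$.

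I do not anticipate a genuine obstacle: each part reduces to the uniqueness statement of Lemma~\ref{lem:con5} via routine inner-product manipulations. The only step slightly outside the formalism already developed is the final passage in $(2)$ from ``$\la x\,,\,T\cdot x\ra_B\geq 0$ for all $x$'' to ``$T\geq 0$'', which the functional-calculus argument above settles using that $A\cong\BK_B(X)$ acts faithfully on $X$, so that no external citation is strictly needed; alternatively one may simply invoke the standard characterization of positivity for adjointable operators on a Hilbert $C^*$-module.
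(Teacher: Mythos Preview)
Your proof is correct and follows essentially the same route as the paper's: both parts are deduced from the identity of Lemma~\ref{lem:con5} via the same inner-product manipulations. The only difference is cosmetic: in $(2)$ the paper invokes \cite[Lemma~2.28]{RW:continuous} twice (for $\la x,c\cdot x\ra_D\geq 0$ and for the converse implication $\la x,T\cdot x\ra_B\geq 0\Rightarrow T\geq 0$), whereas you supply both steps by hand via the square-root and negative-part arguments.
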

\begin{proof} (1) Let $\phi$ be any selfadjoint linear map in ${}_B \BB_B (D, B)$ and
let $c\in C$, $x, z\in X$. By lemma \ref{lem:con5},
\begin{align*}
 \la x \, , \, f_{[X, Y]}(\phi)(c^* )\cdot z \ra_B & =\phi(\la x \, , \, c^* \cdot z \ra_D )
=\phi(\la c\cdot x \, , \, z \ra_D ) \\
& =\phi(\la z\, , \, c\cdot x \ra_D)^*
=\la z \, , \, f_{[X, Y]}(\phi)(c)\cdot x \ra_B^* \\
& =\la f_{[X, Y]}(\phi)(c)\cdot x \, , \, z \ra_B = \la x \, , \, f_{[X, Y]}(\phi)(c)^* \cdot z \ra_B .
\end{align*}
Hence $f_{[X, Y]}(\phi)(c^* )=f_{[X, Y]}(\phi)(c )^* $ for any $c\in C$.
\newline
(2) Let $\phi$ be any positive linear map in ${}_B \BB_B (D, B)$ and let $c$ be any positive element in $C$.
Then $\la x , c\cdot x \ra_D \geq 0$ for any $x\in X$ by Raeburn and Williams
\cite [Lemma 2.28]{RW:continuous}. Hence $\phi(\la x , c\cdot x \ra_D )\geq 0$ for any $x\in X$. That is,
$\la x\, , \, f_{[X, Y]}(\phi)(c)\cdot x \ra_B \geq 0$ for any $x\in X$. Thus $f_{[X, Y]}(\phi)(c)\geq 0$ by
\cite [Lemma 2.28]{RW:continuous}. Therefore, we obtain the conclusion.
\end{proof}

\begin{prop}\label{prop:BP2} Let $A\subset C$ an $B\subset D$ be as in Lemma \ref{lem:BP1}.
If $\phi$ is a conditional expectation from $D$ onto $B$, then
$f_{[X, Y]}(\phi)$ is a conditional expectation from $C$ onto $A$.
\end{prop}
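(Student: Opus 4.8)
The plan is to verify that $f_{[X,Y]}(\phi)$ has the three defining properties of a conditional expectation from $C$ onto $A$: that it is a positive, norm-one $A$--$A$-bimodule linear map from $C$ to $A$, and that it restricts to $\id_A$ on $A$. Two of these come for free. By Lemma \ref{lem:BP1}$(2)$ the positivity of $\phi$ is inherited by $f_{[X,Y]}(\phi)$; and by Proposition \ref{prop:con4} together with the remark (following it) that $f_{[X,Y]}$ is an isometric isomorphism onto ${}_A \BB_A (C,A)$, the map $f_{[X,Y]}(\phi)$ is a bounded $A$--$A$-bimodule linear map from $C$ to $A$ with $||f_{[X,Y]}(\phi)||=||\phi||=1$. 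So the entire content of the proposition reduces to the identity $f_{[X,Y]}(\phi)(a)=a$ for all $a\in A$; once this is established, the range of $f_{[X,Y]}(\phi)$ is exactly $A$ and it is a conditional expectation from $C$ onto $A$.

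For the remaining identity I would argue through the characterization in Lemma \ref{lem:con5}. Fix $a\in A$ and $x,z\in X$. Since $(X,Y)\in\Equi(A,C,B,D)$, the left $A$-action on $X$ is the restriction of the left $C$-action on $Y$, so $a\cdot z\in X$; and the $B$-valued inner product on $X$ is the restriction of the $D$-valued inner product on $Y$, so $\la x,a\cdot z\ra_D=\la x,a\cdot z\ra_B\in B$. Because $\phi$ is a conditional expectation onto $B$, it fixes $B$ pointwise, whence $\phi(\la x,a\cdot z\ra_D)=\la x,a\cdot z\ra_B$. By Lemma \ref{lem:con5} this means
$$
\la x\, ,\, f_{[X,Y]}(\phi)(a)\cdot z\ra_B=\la x\, ,\, a\cdot z\ra_B
$$
for all $x,z\in X$. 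Nondegeneracy of the $B$-valued inner product then gives $f_{[X,Y]}(\phi)(a)\cdot z=a\cdot z$ for every $z\in X$, and since $A\cong\BK_B(X)$ acts faithfully on $X$ we conclude $f_{[X,Y]}(\phi)(a)=a$. Combining this with the two points above completes the proof.

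The only delicate step is $f_{[X,Y]}(\phi)|_A=\id_A$, and the point to get right there is the compatibility encoded in the definition of strong Morita equivalence of inclusions: that $X$ is invariant under the left $A$-action inside $Y$ and that its $B$-valued inner product is the restriction of the $D$-valued one. This is precisely what lets one substitute an element of $A$ into the formula of Lemma \ref{lem:con5} and recognize the output. An alternative route to the same identity is to note first that $\tau|_X=\id_X$ --- indeed $\tau(x\cdot b)=x\cdot\phi(b)=x\cdot b$ for $x\in X$, $b\in B$ by Lemma \ref{lem:con1}, and such elements span a dense subspace of $X$ on which the bounded map $\tau$ agrees with the identity --- so that $f_{[X,Y]}(\phi)(a)\cdot x=\tau(a\cdot x)=a\cdot x$ for all $x\in X$; I would present the Lemma \ref{lem:con5} argument as the main line and mention this as a remark. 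Finally, if the notion of conditional expectation in force is required to be faithful, one checks this similarly: if $f_{[X,Y]}(\phi)(c^*c)=0$ then $\phi(\la c\cdot z,c\cdot z\ra_D)=0$ for every $z\in X$, so faithfulness of $\phi$ forces $c\cdot z=0$ on $X$, hence $c$ acts as $0$ on $Y$ by the density conditions built into $\Equi(A,C,B,D)$, hence $c=0$.
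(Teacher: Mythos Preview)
Your proof is correct and follows essentially the same line as the paper's: the paper also invokes Lemma~\ref{lem:con5} to show $\la x, f_{[X,Y]}(\phi)(a)\cdot z\ra_B=\phi(\la x,a\cdot z\ra_B)=\la x,a\cdot z\ra_B$ and hence $f_{[X,Y]}(\phi)|_A=\id_A$, then cites Proposition~\ref{prop:con4} and Lemma~\ref{lem:BP1} for the remaining properties. Your version is more detailed (spelling out why $a\cdot z\in X$ and $\la x,a\cdot z\ra_D\in B$, and adding the alternative $\tau|_X=\id_X$ argument and the faithfulness remark), but the core argument is identical.
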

\begin{proof} Since $\phi(b)=b$ for any $b\in B$, for any $a\in A$, $x, z\in X$,
$$
\la x\, , \, f_{[X, Y]}(\phi)(a)\cdot z \ra_B =\phi(\la x \, , \, a\cdot z \ra_B )
=\la x \, , \, a\cdot z \ra_B
$$
by Lemma \ref{lem:con5}. Thus $f_{[X, Y]}(\phi)(a)=a$ for any $a\in A$. By Proposition \ref{prop:con4} and
Lemma \ref{lem:BP1}, we obtain the conclusion.
\end{proof}

Since $A\subset C$ and $B\subset D$ are strongly Morita equivalent with respect to
$(X, Y)\in\Equi (A, C, B, D)$, $A^s \subset C^s$ and $B^s \subset D^s$ are strongly Morita equivalent with
respect to $(X^s , Y^s )\in\Equi (A^s \, , C^s \, , B^s \, , D^s )$. Let $\phi$ be any element
in ${}_B \BB_B (D, B)$. Then
$$
\phi\otimes\id_{\BK} \in{}_{B^s} \BB_{B^s} (D^s , B^s ) .
$$

\begin{lemma}\label{lem:BP3} With the above notation, for any $\phi\in {}_B \BB_B (D, B)$
$$
f_{[X^s , Y^s ]}(\phi\otimes\id_{\BK})=f_{[X, Y]}(\phi)\otimes\id_{\BK} .
$$
\end{lemma}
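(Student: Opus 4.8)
The plan is to invoke the uniqueness characterization of $f_{[X,Y]}$ from Lemma \ref{lem:con5} at the level of the stabilized inclusions. The key point is that $f_{[X^s,Y^s]}(\phi\otimes\id_{\BK})$ is by Lemma \ref{lem:con5} the unique linear map $\Theta$ from $C^s$ to $A^s$ satisfying
$$
\la \xi \, , \, \Theta(c)\cdot \eta \ra_{B^s} =(\phi\otimes\id_{\BK})(\la \xi \, , \, c\cdot \eta \ra_{D^s})
$$
for all $c\in C^s$ and $\xi,\eta\in X^s$. So it suffices to verify that the candidate $\Theta:=f_{[X,Y]}(\phi)\otimes\id_{\BK}$ satisfies this identity.

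First I would reduce to elementary tensors: since $X^s=X\otimes\BK$, elements of the form $x\otimes k$ with $x\in X$, $k\in\BK$ span a dense subspace of $X^s$, and similarly $c\otimes m$ with $c\in C$, $m\in M_n(\BC)$ (or $m\in\BK$) span a dense subspace of $C^s$; since all maps involved are bounded and the inner products are continuous, it is enough to check the identity on such elementary tensors. Then I would compute both sides directly using the formula for the $D^s$-valued inner product on $Y^s=Y\otimes\BK$, namely $\la x\otimes k \, , \, y\otimes l\ra_{D^s}=\la x,y\ra_D\otimes k^* l$, and the corresponding formula for $\la\cdot,\cdot\ra_{B^s}$ on $X^s$. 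On the left-hand side, $(f_{[X,Y]}(\phi)\otimes\id_{\BK})(c\otimes m)\cdot(x_2\otimes k_2)=(f_{[X,Y]}(\phi)(c)\cdot x_2)\otimes(m k_2)$, so the inner product against $x_1\otimes k_1$ becomes $\la x_1\, , \, f_{[X,Y]}(\phi)(c)\cdot x_2\ra_B\otimes k_1^* m k_2$. On the right-hand side, $\la x_1\otimes k_1\, , \, (c\otimes m)\cdot(x_2\otimes k_2)\ra_{D^s}=\la x_1\, , \, c\cdot x_2\ra_D\otimes k_1^* m k_2$, and applying $\phi\otimes\id_{\BK}$ gives $\phi(\la x_1\, , \, c\cdot x_2\ra_D)\otimes k_1^* m k_2$. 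The two expressions now agree precisely because $\la x_1\, , \, f_{[X,Y]}(\phi)(c)\cdot x_2\ra_B=\phi(\la x_1\, , \, c\cdot x_2\ra_D)$ by Lemma \ref{lem:con5} applied to the original pair $(X,Y)$.

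Having established the displayed identity for $\Theta=f_{[X,Y]}(\phi)\otimes\id_{\BK}$, the uniqueness clause of Lemma \ref{lem:con5} (applied to $A^s\subset C^s$ and $B^s\subset D^s$, which are strongly Morita equivalent with respect to $(X^s,Y^s)\in\Equi(A^s,C^s,B^s,D^s)$ as noted just before the statement) forces $f_{[X^s,Y^s]}(\phi\otimes\id_{\BK})=f_{[X,Y]}(\phi)\otimes\id_{\BK}$, which is the claim. I do not expect any real obstacle here; the only mild care needed is the density/continuity reduction to elementary tensors and keeping the bookkeeping of the inner-product formulas on the tensor-product bimodules straight, but all of this is routine once the uniqueness characterization is in hand.
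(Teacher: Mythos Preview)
Your proposal is correct and follows essentially the same approach as the paper: verify the characterizing identity of Lemma~\ref{lem:con5} on elementary tensors $x\otimes k$ and $c\otimes m$, using the tensor-product inner-product formulas together with Lemma~\ref{lem:con5} for $(X,Y)$, and then invoke uniqueness. The paper's proof is the same routine computation, only slightly more terse about the density/continuity reduction that you spell out.
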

\begin{proof} This can be proved by routine computations. Indeed, for any $c\in C$,
$x, z\in X$, $k_1, k_2 , k_3 \in \BK$,
\begin{align*}
& \la x\otimes k_1 \, , \, f_{[X^s , Y^s]}(\phi\otimes\id)(c\otimes k_2 )\cdot z\otimes k_3  \ra_{B^s} \\
& = (\phi\otimes\id)(\la x\otimes k_1 \ , \, c\otimes k_2 \cdot z\otimes k_3 \ra_{B^s} ) \\
& = (\phi\otimes\id)(\la x\otimes k_1 \ , \, c\cdot z \otimes k_2 k_3 \ra_{D^s} ) \\
& = (\phi\otimes\id)(\la x \ , \, c\cdot z \ra_D  \otimes k_1^* k_2 k_3 ) \\
& =\la x \, , \, f_{[X, Y]}(\phi)(c)\cdot z \ra_B \otimes k_1^* k_2 k_3 \\
& =\la x\otimes k_1 \, , \, f_{[X, Y]}(\phi)(c)\otimes k_2 \cdot z\otimes k_3  \ra_{B^s}
\end{align*}
by Lemma \ref{lem:con5}. Therefore we obtain the conclusion by Lemma \ref{lem:con5}.
\end{proof}

\begin{cor}\label{cor:BP3-2} With the above notation, let $n\in\BN$. Then for any
$\phi\in{}_B \BB_B (D, B)$,
$$
f_{[X\otimes M_n (\BC) \, , \, Y\otimes M_n (\BC)]}(\phi\otimes\id)=f_{[X, Y]}(\phi)\otimes\id_{M_n (\BC)} .
$$
\end{cor}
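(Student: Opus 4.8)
The plan is to prove this by exactly the same routine computation as in Lemma~\ref{lem:BP3}, with $M_n(\BC)$ playing the role of $\BK$ throughout. First I would record the trivial prerequisites: $(X\otimes M_n(\BC)\, , \, Y\otimes M_n(\BC))\in\Equi(A\otimes M_n(\BC)\, , \, C\otimes M_n(\BC)\, , \, B\otimes M_n(\BC)\, , \, D\otimes M_n(\BC))$, so that $f_{[X\otimes M_n(\BC)\, , \, Y\otimes M_n(\BC)]}$ is defined, and $\phi\otimes\id_{M_n(\BC)}$ lies in ${}_{B\otimes M_n(\BC)}\BB_{B\otimes M_n(\BC)}(D\otimes M_n(\BC)\, , \, B\otimes M_n(\BC))$; both are immediate since $\overline{(A\otimes M_n(\BC))(C\otimes M_n(\BC))}=C\otimes M_n(\BC)$ and likewise for $B\subset D$.

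By Lemma~\ref{lem:con5} it then suffices to check, for all $c\in C$, $x,z\in X$ and $k_1,k_2,k_3\in M_n(\BC)$, that
\begin{align*}
& \la x\otimes k_1\, , \, (f_{[X, Y]}(\phi)\otimes\id_{M_n(\BC)})(c\otimes k_2)\cdot z\otimes k_3 \ra_{B\otimes M_n(\BC)} \\
& = (\phi\otimes\id_{M_n(\BC)})(\la x\otimes k_1\, , \, (c\otimes k_2)\cdot z\otimes k_3 \ra_{D\otimes M_n(\BC)}) .
\end{align*}
Both sides collapse to the single expression $\la x\, , \, f_{[X, Y]}(\phi)(c)\cdot z \ra_B \otimes k_1^* k_2 k_3$: the right-hand side by expanding the $D\otimes M_n(\BC)$-action and the $D\otimes M_n(\BC)$-valued inner product on elementary tensors and then applying Lemma~\ref{lem:con5} to $f_{[X, Y]}(\phi)$, and the left-hand side directly by expanding the $B\otimes M_n(\BC)$-action and inner product on elementary tensors. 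This is the verbatim computation of Lemma~\ref{lem:BP3}.

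There is no genuine obstacle here: the argument of Lemma~\ref{lem:BP3} only ever multiplies the matrix entries $k_1,k_2,k_3$ together, so the passage from $\BK$ to the unital finite-dimensional algebra $M_n(\BC)$ changes nothing in the algebraic manipulation. The only points requiring (routine) care are the standard identifications of the tensor-product equivalence bimodules $X\otimes M_n(\BC)$, $Y\otimes M_n(\BC)$ and of the inner products on elementary tensors. (Alternatively one could realise $M_n(\BC)$ as a full corner of $\BK$ and deduce the statement from Lemma~\ref{lem:BP3} together with Lemmas~\ref{lem:St1} and \ref{lem:St2}, but this requires more bookkeeping with corners and composition of the maps $f_{[\,\cdot\,,\,\cdot\,]}$ via Lemma~\ref{lem:con7} than the direct computation above.)
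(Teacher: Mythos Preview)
Your proposal is correct and matches the paper's intent: the paper states Corollary~\ref{cor:BP3-2} without proof, treating it as immediate from the verbatim computation of Lemma~\ref{lem:BP3} with $M_n(\BC)$ in place of $\BK$, which is exactly what you carry out.
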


\begin{prop}\label{prop:BP4} With the above notation, let $\phi\in{}_B \BB_B (D, B)$. If
$\phi$ is $n$-positive, then $f_{[X, Y]}(\phi)$ is $n$-positive for any $n\in\BN$.
\end{prop}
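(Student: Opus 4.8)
The plan is to reduce the statement to the already-established fact that $f_{[\,\cdot\,,\,\cdot\,]}$ preserves positivity (Lemma \ref{lem:BP1} (2)), applied not to the original inclusions but to their amplifications by $M_n(\BC)$. First I would recall the meaning of $n$-positivity: a bounded $B$-$B$-bimodule linear map $\phi$ from $D$ to $B$ is $n$-positive precisely when $\phi\otimes\id_{M_n(\BC)}$ is a positive element of ${}_{B\otimes M_n(\BC)}\BB_{B\otimes M_n(\BC)}(D\otimes M_n(\BC),B\otimes M_n(\BC))$, and likewise $f_{[X,Y]}(\phi)$ is $n$-positive if and only if $f_{[X,Y]}(\phi)\otimes\id_{M_n(\BC)}$ is positive. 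So it suffices to prove the latter.

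Next I would observe that, since $\overline{AC}=C$ and $\overline{BD}=D$, the inclusions $A\otimes M_n(\BC)\subset C\otimes M_n(\BC)$ and $B\otimes M_n(\BC)\subset D\otimes M_n(\BC)$ satisfy the same density conditions and are strongly Morita equivalent with respect to the pair $(X\otimes M_n(\BC),Y\otimes M_n(\BC))\in\Equi(A\otimes M_n(\BC),C\otimes M_n(\BC),B\otimes M_n(\BC),D\otimes M_n(\BC))$; this is exactly the setting already used for Corollary \ref{cor:BP3-2}. Applying Lemma \ref{lem:BP1} (2) to this amplified pair, the hypothesis that $\phi\otimes\id_{M_n(\BC)}$ is positive yields that $f_{[X\otimes M_n(\BC),Y\otimes M_n(\BC)]}(\phi\otimes\id_{M_n(\BC)})$ is positive. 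By Corollary \ref{cor:BP3-2} this map equals $f_{[X,Y]}(\phi)\otimes\id_{M_n(\BC)}$, so $f_{[X,Y]}(\phi)\otimes\id_{M_n(\BC)}$ is positive, which is precisely the assertion that $f_{[X,Y]}(\phi)$ is $n$-positive.

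The argument is essentially a bookkeeping reduction combining Corollary \ref{cor:BP3-2} with the positivity statement of Lemma \ref{lem:BP1} (2). The only point that would need a sentence of justification is that the $M_n(\BC)$-amplified inclusions are again strongly Morita equivalent via the amplified bimodules, and that $\phi\otimes\id_{M_n(\BC)}$ indeed lies in ${}_{B\otimes M_n(\BC)}\BB_{B\otimes M_n(\BC)}(D\otimes M_n(\BC),B\otimes M_n(\BC))$; both are routine and already implicit in Corollary \ref{cor:BP3-2}, so I do not expect any genuine obstacle.
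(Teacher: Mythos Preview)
Your proposal is correct and follows exactly the same route as the paper: the paper's proof simply says the result is immediate from Lemma~\ref{lem:BP1} and Corollary~\ref{cor:BP3-2}, and your argument is precisely the unpacking of that sentence.
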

\begin{proof} This is immediate by Lemma \ref{lem:BP1} and Corollary \ref{cor:BP3-2}.
\end{proof}

Again, we consider the case that $C^*$-algebras are unital $C^*$-algebras. 
Let $A\subset C$ and $B\subset D$ be unital inclusions of unital $C^*$-algebras, which are
strongly Morita equivalent with respect to a $C-D$-equivalence bimodule $Y$ and its
closed subspace $X$. As mentioned in Section \ref{sec:St}, by \cite [Section 2]{KT4:morita}, there are
a positive integer $n$ and a full projection $p\in M_n (B)$ such that
$$
A\cong pM_n (B)p, \quad C\cong pM_n (D)p
$$
as $C^*$-algebras, respectively. We denote the above isomorphisms by
$$
\Psi_A : A\to pM_n (B)p , \quad \Psi_C : C\to pM_n (D)p ,
$$
respectively. We note that $\Psi_A =\Psi_C |_A$.

\begin{prop}\label{prop:BP5} Let $\phi$ be a conditional expectation from $D$ onto $B$. We suppose
that there is a positive number $t$ such that
$$
\phi(d)\geq td
$$
for any positive element $d\in D$. Then there is a positive number $s$ such that
$$
f_{[X, Y]}(\phi)(c)\geq sc
$$
for any positive element $c\in C$.
\end{prop}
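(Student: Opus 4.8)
The plan is to reduce, via Lemma~\ref{lem:St8}, to a Pimsner--Popa type inequality for the amplification $\phi\otimes\id_{M_n(\BC)}$, and then to feed in the characterization of index‑finiteness by such inequalities. Recall from the remarks preceding the proposition that there are a positive integer $n$, a full projection $p\in M_n(B)$ and isomorphisms $\Psi_C\colon C\to pM_n(D)p$, $\Psi_A\colon A\to pM_n(B)p$ with $\Psi_A=\Psi_C|_A$, and that by Lemma~\ref{lem:St8}
$$
f_{[X,Y]}(\phi)=\Psi_A^{-1}\circ F(\phi)\circ\Psi_C ,\qquad F(\phi)=(\phi\otimes\id_{M_n(\BC)})|_{pM_n(D)p}.
$$
Here $\phi\otimes\id_{M_n(\BC)}$ is a conditional expectation of $M_n(D)$ onto $M_n(B)$; since $p\in M_n(B)$ and $\phi\otimes\id_{M_n(\BC)}$ is an $M_n(B)$-bimodule map, one has $F(\phi)(pyp)=p(\phi\otimes\id_{M_n(\BC)})(y)p$ for $y\in M_n(D)$, and in particular $F(\phi)(c')=(\phi\otimes\id_{M_n(\BC)})(c')\in pM_n(B)p$ for $c'\in pM_n(D)p$. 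By Proposition~\ref{prop:BP2}, $f_{[X,Y]}(\phi)$ is already a conditional expectation of $C$ onto $A$, so only the inequality is at issue.

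The key step is the following: a conditional expectation $\phi$ with $\phi(d)\ge td$ for all positive $d\in D$ is of index‑finite type, i.e.\ admits a quasi‑basis, and conversely any conditional expectation of index‑finite type satisfies a Pimsner--Popa inequality with a strictly positive constant (this is the $C^*$-analogue of Pimsner and Popa's characterization of finite index; I would cite Watatani~\cite{Watatani:index} for this equivalence). Granting it, $\phi$ has a quasi‑basis $\{(u_k,u_k^*)\}_{k=1}^{m}$, and a routine computation shows that $\{(u_k\otimes e_{ij},\,u_k^*\otimes e_{ji})\}_{k,i,j}$ is a quasi‑basis for $\phi\otimes\id_{M_n(\BC)}$. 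Hence $\phi\otimes\id_{M_n(\BC)}$ is again of index‑finite type, so there is $s>0$ with
$$
(\phi\otimes\id_{M_n(\BC)})(x)\ge s\,x\qquad\text{for every positive }x\in M_n(D).
$$

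It remains to transfer this inequality. Let $c\in C$ be positive and put $c'=\Psi_C(c)\in pM_n(D)p$; as $\Psi_C$ is an isomorphism, $c'$ is positive in $pM_n(D)p$, hence positive in $M_n(D)$. Then $(\phi\otimes\id_{M_n(\BC)})(c')-s\,c'\ge 0$ in $M_n(D)$, and since this element lies in the corner $pM_n(D)p$ it is positive there as well. Applying the isomorphism $\Psi_C^{-1}$, and using $F(\phi)(c')\in pM_n(B)p$ together with $\Psi_A=\Psi_C|_A$ (so that $\Psi_C^{-1}$ and $\Psi_A^{-1}$ agree on $pM_n(B)p$), we obtain
$$
\Psi_C^{-1}\bigl(F(\phi)(c')\bigr)-s\,c=\Psi_A^{-1}\bigl(F(\phi)(\Psi_C(c))\bigr)-s\,c=f_{[X,Y]}(\phi)(c)-s\,c\ge 0
$$
in $C$, by Lemma~\ref{lem:St8}. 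Thus $f_{[X,Y]}(\phi)(c)\ge s\,c$ for every positive $c\in C$, as claimed. The main obstacle is the equivalence invoked in the second step — one has to make sure the reference gives both directions (existence of a quasi‑basis $\Leftrightarrow$ a Pimsner--Popa inequality with positive constant), since it is the converse direction, from the hypothesis $\phi(d)\ge td$ to index‑finiteness, that does the work; once that is in hand the remainder is bookkeeping with the identifications $\Psi_A,\Psi_C$ and with the fact that a positive element of a hereditary corner is positive in the ambient algebra.
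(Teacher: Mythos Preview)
Your reduction via Lemma~\ref{lem:St8} and the final transfer through $\Psi_C$ are exactly what the paper does. The one substantive difference is how the Pimsner--Popa inequality is obtained for the amplification $\phi\otimes\id_{M_n(\BC)}$: the paper simply invokes Frank and Kirchberg \cite[Theorem~1]{FK:conditional} to pass directly from $\phi(d)\ge td$ to $(\phi\otimes\id_{M_n(\BC)})(d')\ge sd'$ for positive $d'\in M_n(D)$, without any detour through quasi-bases. Your route (PP inequality $\Rightarrow$ quasi-basis $\Rightarrow$ amplification has a quasi-basis $\Rightarrow$ amplification satisfies a PP inequality) is logically fine, but your worry about the reference is exactly right: the forward implication ``PP inequality $\Rightarrow$ index-finite type'' is \emph{not} in Watatani's memoir---it is precisely the content of Frank--Kirchberg. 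So you end up needing the same external input as the paper, just used less directly. One small slip: the family $\{(u_k\otimes e_{ij},\,u_k^*\otimes e_{ji})\}_{k,i,j}$ overcounts by a factor of $n$; the correct (and simpler) quasi-basis for $\phi\otimes\id_{M_n(\BC)}$ is $\{(u_k\otimes I_n,\,u_k^*\otimes I_n)\}_k$.
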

\begin{proof}
We recall the discussions after Proposition \ref{prop:St5}. That is, by Lemma \ref{lem:St8},
$f_{[X, Y]}(\phi)=\Psi_A^{-1}\circ F(\phi)\circ\Psi_C$ for any $\phi\in {}_B \BB_B(D, B)$,
where $F$ is the isometric isomorphism of ${}_B \BB_B (D, B)$
onto ${}_{pM_n (B)p}\BB_{pM_n (B)p}(pM_n (D)p\, , \, pM_n (B)p)$ defined before Remark \ref{remark:St6}.
Also, $\Psi_A$ and $\Psi_C$ are isomorphisms
of $A$ and $C$ onto $pM_n (B)p$ and $pM_n (D)p$ defined after Proposition \ref{prop:St5}, respectively.
We note that $\Psi_C |_A =\Psi_A$. Since there is a positive number $t$ such that $\phi(d)\geq td$
for any positive element $d\in D$, by Frank and Kirchberg \cite [Theorem 1]{FK:conditional},
there is a positive number $s$ such that $(\phi\otimes\id_{M_n (\BC)})(d)\geq sd$ for any positive element
$d\in M_n (D)$. Thus for any $c\in C$,
\begin{align*}
f_{[X, Y]}(\phi)(c^* c) & =(\Psi_A^{-1}\circ F(\phi)\circ\Psi_C )(c^* c) \\
& =\Psi_A^{-1}((\phi\otimes\id_{M_n (\BC)})(\Psi_C (c)^* \Psi_C (c)) \\
& \geq \Psi_A^{-1}(s\Psi_C (c)^* \Psi_C (c)) \\
& = sc^* c
\end{align*}
since $F(\phi)=\phi\otimes\id_{M_n(\BC)}$ and $\Psi_C |_A =\Psi_A$.
Therefore, we obtain the conclusion.
\end{proof}

Following Watatani \cite [Definition 1.11.1]{Watatani:index}, we give the following definition.

\begin{Def}\label{def:BP6} Let $\phi\in {}_B \BB_B (D, B)$. Then a finite set $\{(u_i , v_i )\}_{i=1}^m \subset D\times D$
is called
\sl
a quasi-basis
\rm
for $\phi$ if it satisfies that
$$
d=\sum_{i=1}^m u_i \phi(v_i d)=\sum_{i=1}^m \phi(du_i )v_i
$$
for any $d\in D$.
\end{Def}

\begin{lemma}\label{lem:BP7} With the above notation, let $\phi\in{}_B \BB_B (D, B)$
with a quasi-basis $\{(u_i , v_i )\}_{i=1}^m$. Then the finite set of $D\times D$
$$
\{(p(u_i \otimes I_n )a_j p \, , \, p b_j (v_i \otimes I_n )p )\}_{i=1,2,\dots, m, \, j=1,2,\dots, K}
$$
is a quasi-basis for $F(\phi)$, where $a_1 , a_2, \dots, a_K, b_1, b_2, \dots, b_K$ are elements in
$M_n (B)$ with
$$
\sum_{j=1}^K a_j p b_j =1_{M_n(B)} .
$$
\end{lemma}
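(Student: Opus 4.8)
The plan is to pass through the map $\phi\mapsto\phi\otimes\id_{M_n(\BC)}$ on all of $M_n(D)$ and then compress to $pM_n(D)p$, using the relation $\sum_j a_j p b_j=1_{M_n(B)}$ to take care of the compression. Recall from the construction of $F$ that $F(\phi)$ is the restriction of $\phi\otimes\id_{M_n(\BC)}$ to $pM_n(D)p$, so that $F(\phi)(pXp)=p(\phi\otimes\id_{M_n(\BC)})(X)p$ for all $X\in M_n(D)$, because $\phi\otimes\id_{M_n(\BC)}$ is an $M_n(B)$-bimodule map from $M_n(D)$ to $M_n(B)$ and $p\in M_n(B)$.

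First I would show that $\{(u_i\otimes I_n\, , \, v_i\otimes I_n)\}_{i=1}^m$, with $u_i,v_i$ regarded as elements of $D$ and $u_i\otimes I_n,v_i\otimes I_n\in D\otimes M_n(\BC)=M_n(D)$, is a quasi-basis for $\phi\otimes\id_{M_n(\BC)}\colon M_n(D)\to M_n(B)$. Writing $Y=\sum_{k,l}y_{kl}\otimes e_{kl}\in M_n(D)$ and using $I_n e_{kl}=e_{kl}I_n=e_{kl}$, one gets $(v_i\otimes I_n)Y=\sum_{k,l}(v_i y_{kl})\otimes e_{kl}$, hence $(\phi\otimes\id_{M_n(\BC)})((v_i\otimes I_n)Y)=\sum_{k,l}\phi(v_i y_{kl})\otimes e_{kl}$, and then $(u_i\otimes I_n)(\phi\otimes\id_{M_n(\BC)})((v_i\otimes I_n)Y)=\sum_{k,l}u_i\phi(v_i y_{kl})\otimes e_{kl}$; summing over $i$ and using $\sum_i u_i\phi(v_i d)=d$ for $d\in D$ gives $\sum_i(u_i\otimes I_n)(\phi\otimes\id_{M_n(\BC)})((v_i\otimes I_n)Y)=Y$. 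The companion identity $\sum_i(\phi\otimes\id_{M_n(\BC)})(Y(u_i\otimes I_n))(v_i\otimes I_n)=Y$ is proved the same way from $\sum_i\phi(du_i)v_i=d$.

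Next, let $d\in pM_n(D)p$, so $pd=dp=d$. For the first quasi-basis identity, for fixed $i,j$ one has $[pb_j(v_i\otimes I_n)p]\,d=pb_j(v_i\otimes I_n)d\in pM_n(D)p$, whence, by $M_n(B)$-bimodularity of $\phi\otimes\id_{M_n(\BC)}$ together with $b_j,p\in M_n(B)$, $F(\phi)([pb_j(v_i\otimes I_n)p]\,d)=pb_j(\phi\otimes\id_{M_n(\BC)})((v_i\otimes I_n)d)p$. Multiplying on the left by $p(u_i\otimes I_n)a_j p$ and using $a_j p\cdot pb_j=a_j p b_j$, the factor $a_j p b_j$ stands between $(u_i\otimes I_n)$ and $(\phi\otimes\id_{M_n(\BC)})((v_i\otimes I_n)d)$; summing over $j$ and using $\sum_j a_j p b_j=1_{M_n(B)}$ collapses the $j$-sum to $p(u_i\otimes I_n)(\phi\otimes\id_{M_n(\BC)})((v_i\otimes I_n)d)p$, and then summing over $i$ and applying the first identity of the previous paragraph with $Y=d$ yields $pdp=d$. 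The second quasi-basis identity, $\sum_{i,j}F(\phi)(d\cdot p(u_i\otimes I_n)a_j p)\cdot pb_j(v_i\otimes I_n)p=d$, is obtained symmetrically, now using the right $M_n(B)$-linearity of $\phi\otimes\id_{M_n(\BC)}$ and the companion identity.

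The argument is essentially bookkeeping; the one point requiring attention is to arrange the factors so that $\sum_j a_j p b_j=1_{M_n(B)}$ can be inserted ``in the middle'', which is possible precisely because $\phi\otimes\id_{M_n(\BC)}$ is $M_n(B)$-bimodular and the proposed quasi-basis elements have the shapes $p(\,\cdot\,)a_j p$ and $pb_j(\,\cdot\,)p$. I do not anticipate any genuinely difficult step.
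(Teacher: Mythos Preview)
Your proof is correct and follows essentially the same approach as the paper's. The paper's proof merely asserts that $F(\phi)=(\phi\otimes\id_{M_n(\BC)})|_{pM_n(D)p}$, notes the existence of $a_j,b_j$ with $\sum_j a_j p b_j=1_{M_n(B)}$, and then says the quasi-basis property follows ``by easy computations''; you have written out exactly those computations, first verifying the quasi-basis for $\phi\otimes\id_{M_n(\BC)}$ on all of $M_n(D)$ and then compressing via the $M_n(B)$-bimodularity and the relation $\sum_j a_j p b_j=1_{M_n(B)}$.
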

\begin{proof} This lemma can be proved in the same way as in \cite [Section 2]{KT4:morita}.
Indeed, $F(\phi)=(\phi\otimes\id_{M_n (\BC)})|_{pM_n (D)p}$ and since $p$ is
a full projection in $M_n (B)$, there are elements $a_1,  a_2, \dots, a_K, b_1, b_2, \dots, b_K$ in
$M_n (B)$ such that $\sum_{j=1}^K a_j pb_j =1_{M_n (B)}$. Then the finite set
$$
\{(p(u_i \otimes I_n )a_j p \, , \, p b_j (v_i \otimes I_n )p )\}_{i=1,2,\dots, m, \, j=1,2,\dots, K}
$$
of $D\times D$ is a quasi-basis for $F(\phi)$ by easy computations.
\end{proof}

\begin{prop}\label{prop:BP8} Let $\phi\in{}_B \BB_B (D, B)$. If there is a quasi-basis for $\phi$, then
there is a quasi-basis for $f_{[X, Y]}(\phi)$.
\end{prop}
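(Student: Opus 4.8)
The plan is to reduce to the unital matrix-algebra picture already set up in Section~\ref{sec:St}. Since $A\subset C$ and $B\subset D$ are unital inclusions strongly Morita equivalent via $(X,Y)$, Lemma~\ref{lem:St8} gives $f_{[X,Y]}(\phi)=\Psi_A^{-1}\circ F(\phi)\circ\Psi_C$, where $F(\phi)=(\phi\otimes\id_{M_n(\BC)})|_{pM_n(D)p}$ and $\Psi_A=\Psi_C|_A$ are the isomorphisms $A\cong pM_n(B)p$, $C\cong pM_n(D)p$. So it suffices to produce a quasi-basis for $F(\phi)$ and then transport it through the isomorphism $\Psi_C$.

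First I would invoke Lemma~\ref{lem:BP7}: starting from a quasi-basis $\{(u_i,v_i)\}_{i=1}^m$ for $\phi$, and choosing $a_1,\dots,a_K,b_1,\dots,b_K\in M_n(B)$ with $\sum_{j=1}^K a_jpb_j=1_{M_n(B)}$ (possible because $p$ is a full projection in $M_n(B)$), the finite set
$$
\{(p(u_i\otimes I_n)a_jp\,,\,pb_j(v_i\otimes I_n)p)\}_{i,j}\subset pM_n(D)p\times pM_n(D)p
$$
is a quasi-basis for $F(\phi)$. Then I would push this set forward under $\Psi_C^{-1}$: since $\Psi_C$ is a $*$-isomorphism of $C$ onto $pM_n(D)p$ carrying $A$ onto $pM_n(B)p=\Psi_C(A)$, and since $f_{[X,Y]}(\phi)=\Psi_A^{-1}\circ F(\phi)\circ\Psi_C$, applying $\Psi_C^{-1}$ to the defining identity $pdp=\sum_{i,j}(\cdots)F(\phi)((\cdots)\,pdp)=\sum_{i,j}F(\phi)(pdp\,(\cdots))(\cdots)$ shows that
$$
\{(\Psi_C^{-1}(p(u_i\otimes I_n)a_jp)\,,\,\Psi_C^{-1}(pb_j(v_i\otimes I_n)p))\}_{i,j}
$$
is a quasi-basis for $f_{[X,Y]}(\phi)$, because $\Psi_C$ is multiplicative and $\Psi_A=\Psi_C|_A$ intertwines $F(\phi)$ with $f_{[X,Y]}(\phi)$.

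The only genuinely nontrivial input is Lemma~\ref{lem:BP7}, which is already granted; everything else is bookkeeping about transporting the Watatani quasi-basis conditions through the $*$-isomorphism $\Psi_C$, using that a $*$-isomorphism preserves the algebraic identities in Definition~\ref{def:BP6}. The one point to state carefully is that both entries of each quasi-basis pair must lie in $C$ (equivalently in $pM_n(D)p$), which holds by construction since $p(u_i\otimes I_n)a_jp$ and $pb_j(v_i\otimes I_n)p$ are visibly in $pM_n(D)p$. Thus I expect no real obstacle; the proof is essentially ``apply Lemma~\ref{lem:St8}, then Lemma~\ref{lem:BP7}, then conjugate by $\Psi_C$.''

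\begin{proof}
Since $A\subset C$ and $B\subset D$ are unital inclusions of unital $C^*$-algebras which are strongly Morita equivalent with respect to $(X,Y)$, by \cite[Section 2]{KT4:morita} there are a positive integer $n$ and a full projection $p\in M_n(B)$ together with isomorphisms $\Psi_A : A\to pM_n(B)p$ and $\Psi_C : C\to pM_n(D)p$ with $\Psi_A=\Psi_C|_A$. By Lemma~\ref{lem:St8},
$$
f_{[X,Y]}(\phi)=\Psi_A^{-1}\circ F(\phi)\circ\Psi_C ,
$$
where $F(\phi)=(\phi\otimes\id_{M_n(\BC)})|_{pM_n(D)p}$. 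Let $\{(u_i,v_i)\}_{i=1}^m$ be a quasi-basis for $\phi$. Since $p$ is a full projection in $M_n(B)$, choose $a_1,\dots,a_K,b_1,\dots,b_K\in M_n(B)$ with $\sum_{j=1}^K a_jpb_j=1_{M_n(B)}$. By Lemma~\ref{lem:BP7}, the finite set
$$
\{(p(u_i\otimes I_n)a_jp\,,\,pb_j(v_i\otimes I_n)p)\}_{i=1,\dots,m,\ j=1,\dots,K}
$$
is a quasi-basis for $F(\phi)$. Put $U_{ij}=\Psi_C^{-1}(p(u_i\otimes I_n)a_jp)\in C$ and $V_{ij}=\Psi_C^{-1}(pb_j(v_i\otimes I_n)p)\in C$. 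For any $c\in C$, applying $\Psi_C^{-1}$ to the quasi-basis identities for $F(\phi)$ evaluated at $\Psi_C(c)\in pM_n(D)p$, and using that $\Psi_C$ is a $*$-isomorphism and $\Psi_A=\Psi_C|_A$ so that $\Psi_A^{-1}\circ F(\phi)\circ\Psi_C=f_{[X,Y]}(\phi)$, we obtain
$$
c=\sum_{i=1}^m\sum_{j=1}^K U_{ij}\,f_{[X,Y]}(\phi)(V_{ij}\,c)=\sum_{i=1}^m\sum_{j=1}^K f_{[X,Y]}(\phi)(c\,U_{ij})\,V_{ij} .
$$
Hence $\{(U_{ij},V_{ij})\}_{i,j}$ is a quasi-basis for $f_{[X,Y]}(\phi)$.
\end{proof}
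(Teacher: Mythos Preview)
Your proof is correct and follows exactly the same route as the paper: the paper's proof simply reads ``This is immediate by Lemmas~\ref{lem:St8} and \ref{lem:BP7},'' and you have spelled out precisely that argument, including the transport of the quasi-basis through the isomorphism $\Psi_C$.
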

\begin{proof} This is immediate by Lemmas \ref{lem:St8} and \ref{lem:BP7}.
\end{proof}

\begin{cor}\label{cor:BP9} Let $E^B$ is a conditional expectation from $D$ onto $B$,
which is of Watatani index-finite type, then $f_{[X, Y]}(E^B )$ is a conditional expectation
from $C$ onto $A$, which is of Watatani index-finite type.
\end{cor}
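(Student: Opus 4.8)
The plan is to deduce this corollary directly from two results already in hand: Proposition \ref{prop:BP2}, which says $f_{[X,Y]}$ sends a conditional expectation from $D$ onto $B$ to a conditional expectation from $C$ onto $A$, and Proposition \ref{prop:BP8}, which says $f_{[X,Y]}$ preserves the existence of a quasi-basis. The link between the two is Watatani's definition (\cite[Definition 1.11.1 and Definition 2.1.6]{Watatani:index}): a conditional expectation is of index-finite type exactly when it admits a quasi-basis in the sense of Definition \ref{def:BP6}.

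Concretely, I would argue as follows. Apply Proposition \ref{prop:BP2} with $\phi=E^B$: since $E^B$ is a conditional expectation from $D$ onto $B$, the map $f_{[X,Y]}(E^B)$ is a conditional expectation from $C$ onto $A$. Since $E^B$ is of Watatani index-finite type, it has a quasi-basis $\{(u_i,v_i)\}_{i=1}^m\subset D\times D$; then Proposition \ref{prop:BP8} produces a quasi-basis for $f_{[X,Y]}(E^B)$. A conditional expectation that admits a quasi-basis is of index-finite type, so $f_{[X,Y]}(E^B)$ is a conditional expectation from $C$ onto $A$ of Watatani index-finite type, which is the claim.

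The only point needing attention is the ambient hypotheses: Proposition \ref{prop:BP8} is obtained via Lemmas \ref{lem:St8} and \ref{lem:BP7}, which are stated for unital inclusions $A\subset C$ and $B\subset D$ of unital $C^*$-algebras, using a full projection $p\in M_n(B)$ and elements $a_j,b_j\in M_n(B)$ with $\sum_{j=1}^K a_j p b_j=1_{M_n(B)}$. Accordingly this corollary is to be read in that unital setting. Beyond recording this, I expect no real obstacle: the statement is an immediate consequence of Propositions \ref{prop:BP2} and \ref{prop:BP8}, so the proof is essentially one line invoking them together with the characterization of index-finite type via quasi-bases.
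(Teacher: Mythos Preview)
Your proposal is correct and matches the paper's own proof, which simply states that the corollary is immediate by Propositions \ref{prop:BP2} and \ref{prop:BP8}. Your remark about the unital setting is appropriate, since those propositions rely on the unital machinery of Lemmas \ref{lem:St8} and \ref{lem:BP7}.
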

\begin{proof} This is immediate by Propositions \ref{prop:BP2} and \ref{prop:BP8}.
\end{proof}

\section{The Picard groups}\label{sec:Pg} Let $A\subset C$ be an inclusion of $C^*$-algebras
with $\overline{AC}=C$. Let ${}_A \BB_A (C, A)$ be as above. Let $\Pic(A, C)$ be the Picard
group of the inclusion $A\subset C$.

\begin{Def}\label{def:Pg1} Let $\phi\in {}_A \BB_A (C, A)$. We define $\Pic(\phi)$ by
$$
\Pic (\phi)=\{[X, Y]\in\Pic(A, C) \, | \, f_{[X, Y]}(\phi)=\phi \} .
$$
We call $\Pic(\phi)$ the
\it
Picard group
\rm
of $\phi$.
\end{Def}
Let $B\subset D$ be an inclusion of $C^*$-algebras with $\overline{BD}=D$.
Let $\phi \in {}_B \BB_B (D, B)$ and $\psi\in {}_A \BB_A (C, A)$.

\begin{lemma}\label{lem:Pg2} With the above notation, if $\phi$ and $\psi$ are
strongly Morita equivalent with respect to $(Z, W)\in\Equi (A, C, B, D)$, then
$\Pic(\phi)\cong\Pic(\psi)$ as groups.
\end{lemma}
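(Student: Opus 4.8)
The plan is to realise the isomorphism as conjugation by the class $[Z,W]$. Before doing so I would note that $\Pic(\phi)$ is a subgroup of $\Pic(B,D)$ and $\Pic(\psi)$ a subgroup of $\Pic(A,C)$: the identity classes $[B,D]$ and $[A,C]$ fix $\phi$ and $\psi$, respectively, since $f_{[B,D]}=\id$ and $f_{[A,C]}=\id$ (a direct consequence of Lemma \ref{lem:con5}); closure under the group operation follows from Lemma \ref{lem:con7}, which gives $f_{[X_1,Y_1]\otimes[X_2,Y_2]}=f_{[X_1,Y_1]}\circ f_{[X_2,Y_2]}$; and closure under inverses follows because Lemma \ref{lem:con7} together with $f_{[B,D]}=\id$ forces $f_{[X,Y]^{-1}}=f_{[X,Y]}^{-1}$ for $[X,Y]\in\Pic(B,D)$ (and similarly on the $(A,C)$ side).

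Next, let $[Z,W]^{-1}\in\Equi(B,D,A,C)/\!\sim$ denote the inverse of the class $[Z,W]$, characterised by $[Z,W]\otimes[Z,W]^{-1}=[A,C]$ and $[Z,W]^{-1}\otimes[Z,W]=[B,D]$; by the same argument as in the previous paragraph, $f_{[Z,W]^{-1}}=f_{[Z,W]}^{-1}$ as isometric isomorphisms of ${}_A\BB_A(C,A)$ onto ${}_B\BB_B(D,B)$. I would then define
$$
G([X,Y])=[Z,W]\otimes[X,Y]\otimes[Z,W]^{-1} ,
$$
and routine properties of the tensor product of equivalence bimodules show that $G$ is a group isomorphism of $\Pic(B,D)$ onto $\Pic(A,C)$, with inverse $[U,V]\mapsto[Z,W]^{-1}\otimes[U,V]\otimes[Z,W]$. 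It then remains to check that $G$ restricts to a bijection of $\Pic(\phi)$ onto $\Pic(\psi)$. By Definition \ref{def:SM1}, the strong Morita equivalence of $\phi$ and $\psi$ with respect to $(Z,W)$ means $f_{[Z,W]}(\phi)=\psi$, hence $f_{[Z,W]^{-1}}(\psi)=\phi$. For $[X,Y]\in\Pic(\phi)$, two applications of Lemma \ref{lem:con7} give $f_{G([X,Y])}=f_{[Z,W]}\circ f_{[X,Y]}\circ f_{[Z,W]^{-1}}$, so that
$$
f_{G([X,Y])}(\psi)=f_{[Z,W]}(f_{[X,Y]}(\phi))=f_{[Z,W]}(\phi)=\psi ,
$$
i.e.\ $G([X,Y])\in\Pic(\psi)$. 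The same computation with $G^{-1}$ in place of $G$ shows $G^{-1}(\Pic(\psi))\subseteq\Pic(\phi)$, whence $G$ restricts to a group isomorphism of $\Pic(\phi)$ onto $\Pic(\psi)$, as desired.

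The only real difficulty is the bookkeeping with the composition of equivalence bimodules: one needs that $\otimes$ descends to a well-defined, associative operation on $\sim$-classes, that $[Z,W]$ admits an inverse with the stated properties, and that Lemma \ref{lem:con7} extends from a product of two equivalence bimodules to a product of three. All of this is standard from the construction of the Picard group and from Lemma \ref{lem:con7}, so beyond these formalities the proof is exactly the conjugation computation above.
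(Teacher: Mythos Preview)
Your proof is correct and follows essentially the same approach as the paper: both realise the isomorphism as conjugation by $[Z,W]$ and use Lemma~\ref{lem:con7} to verify that $f_{[Z,W]}\circ f_{[X,Y]}\circ f_{[Z,W]}^{-1}$ fixes $\psi$ whenever $f_{[X,Y]}$ fixes $\phi$. The only cosmetic difference is that the paper writes the inverse class explicitly as $[\widetilde{Z},\widetilde{W}]$ rather than $[Z,W]^{-1}$, and leaves the subgroup and bijectivity verifications you spelled out to ``easy computations.''
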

\begin{proof} Let $g$ be the map from $\Pic(\phi)$ to $\Pic(A, C)$ defined by
$$
g([X, Y])=[Z\otimes_B X \otimes_B \widetilde{Z}\, , \, W\otimes_D Y\otimes_D \widetilde{W} ]
$$
for any $[X, Y]\in\Pic (\phi)$. Then since $f_{[Z, W]}(\phi)=\psi$, by Lemma \ref{lem:con7}
\begin{align*}
f_{[Z\otimes_B X \otimes_B \widetilde{Z}\, , \, W\otimes_D Y\otimes_D \widetilde{W} ]}(\psi)
& =(f_{[Z, W]}\circ f_{[X, Y]} \circ f_{[\widetilde{Z}, \widetilde{W}]})(\psi) \\
& =(f_{[Z, W]}\circ f_{[X, Y]} \circ f_{[Z, W]}^{-1})(\psi)=\psi .
\end{align*}
Hence $[Z\otimes_B X \otimes_B \widetilde{Z}\, , \, W\otimes_D Y\otimes_D \widetilde{W} ]\in\Pic(\psi)$
and by easy computations, we can see that $g$ is an isomorphism of $\Pic(\phi)$ onto $\Pic(\psi)$.
\end{proof}

Let $\phi\in{}_A \BB_A (C, A)$. Let $\alpha$ be an automorphism of $C$ such that
the restriction of $\alpha$ to $A$, $\alpha|_A$
is an automorphism of $A$. Let $\Aut (A, C)$ be the group of all such automorphisms and let
$$
\Aut (A, C, \phi)=\{\alpha\in\Aut(A, C) \, | \, \alpha\circ\phi\circ\alpha^{-1}=\phi \} .
$$
Then $\Aut (A, C, \phi)$ is a subgroup of $\Aut (A, C)$. Let $\pi$ be the homomorphism of $\Aut(A, C)$
to $\Pic(A, C)$ defined by
$$
\pi(\alpha)=[X_{\alpha}, Y_{\alpha}]
$$
for any $\alpha\in\Aut (A, C)$, where $(X_{\alpha}, Y_{\alpha})$ is an element in $\Equi (A, C)$
induced by $\alpha$, which is defined in \cite [Section 3]{Kodaka:Picard2}, where $\Equi (A, C)=\Equi(A, C, A, C)$.
Let $u$ be a unitary element in $M(A)$. Then $u\in M(C)$ and $\Ad(u)\in\Aut(A, C)$ since
$\overline{AC}=C$. Let $\Int (A, C)$ be the group of all such automorphisms in $\Aut (A, C)$.
We note that $\Int (A, C)=\Int (A)$, the subgroup of $\Aut(A)$ of all generalized inner automorphisms of $A$.
Let $\imath$ be the inclusion map of $\Int (A, C)$ to $\Aut (A, C)$.

\begin{lemma}\label{lem:Pg3} With the above notation, let $\phi\in{}_A \BB_A (C, A)$. Then the following
hold:
\newline
$(1)$ For any $\alpha\in\Aut (A, C)$, $f_{[X_{\alpha}, Y_{\alpha}]}(\phi)=\alpha\circ\phi\circ\alpha^{-1}$.
\newline
$(2)$ The map $\pi|_{\Aut(A, C, \phi)}$ is a homomorphism of $\Aut(A, C, \phi)$
to $\Pic(\phi)$, where $\pi|_{\Aut(A, C, \phi)}$ is the restriction of $\pi$ to $\Aut (A, C, \phi)$.
\newline
$(3)$ $\Int(A, C)\subset \Aut(A, C, \phi)$ and the following sequence
$$
1\longrightarrow \Int(A, C)\overset{\imath}{\longrightarrow}\Aut(A, C, \phi)\overset{\pi}{\longrightarrow}\Pic(\phi)
$$
is exact.
\end{lemma}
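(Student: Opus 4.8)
The plan is to deduce all three parts from the uniqueness in Lemma~\ref{lem:con5}, the explicit description of the pair $(X_\alpha,Y_\alpha)\in\Equi(A,C)$ attached to $\alpha\in\Aut(A,C)$, and the already established properties of the homomorphism $\pi\colon\Aut(A,C)\to\Pic(A,C)$ from \cite[Section~3]{Kodaka:Picard2}. Recall (with the conventions of \cite[Section~3]{Kodaka:Picard2}) that $X_\alpha=A$ and $Y_\alpha=C$ as Banach spaces, the left actions being the usual multiplications, the right actions being $x\cdot a=x\alpha(a)$ and $y\cdot c=y\alpha(c)$, and the inner products being $\la x,y\ra_A=\alpha^{-1}(x^*y)$, $\la y,z\ra_C=\alpha^{-1}(y^*z)$, ${}_A\la x,y\ra=xy^*$, ${}_C\la y,z\ra=yz^*$.

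For part (1) I would put $\psi=\alpha\circ\phi\circ\alpha^{-1}$, which lies in ${}_A\BB_A(C,A)$ since $\alpha|_A\in\Aut(A)$ and $\phi$ is an $A-A$-bimodule map, and identify $\psi$ with $f_{[X_\alpha,Y_\alpha]}(\phi)$ via the uniqueness in Lemma~\ref{lem:con5}. Applying that lemma with $B=A$, $D=C$, $X=X_\alpha$, $Y=Y_\alpha$, it suffices to check $\la x,\psi(c)\cdot z\ra_A=\phi(\la x,c\cdot z\ra_C)$ for all $c\in C$ and $x,z\in X_\alpha=A$. Using that $\alpha^{-1}$ is a $*$-isomorphism carrying $A$ onto $A$ and that $\phi$ is $A-A$-bilinear,
\begin{align*}
\la x,\psi(c)\cdot z\ra_A
&=\alpha^{-1}\bigl(x^*\alpha(\phi(\alpha^{-1}(c)))z\bigr) \\
&=\alpha^{-1}(x)^*\,\phi(\alpha^{-1}(c))\,\alpha^{-1}(z) \\
&=\phi\bigl(\alpha^{-1}(x)^*\,\alpha^{-1}(c)\,\alpha^{-1}(z)\bigr) \\
&=\phi\bigl(\alpha^{-1}(x^*cz)\bigr)=\phi(\la x,c\cdot z\ra_C),
\end{align*}
and uniqueness in Lemma~\ref{lem:con5} forces $f_{[X_\alpha,Y_\alpha]}(\phi)=\psi=\alpha\circ\phi\circ\alpha^{-1}$.

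For part (2) I would note that $\alpha\in\Aut(A,C,\phi)$ means $\alpha\circ\phi\circ\alpha^{-1}=\phi$, which by (1) is equivalent to $f_{[X_\alpha,Y_\alpha]}(\phi)=\phi$, i.e.\ to $\pi(\alpha)=[X_\alpha,Y_\alpha]\in\Pic(\phi)$. Hence $\pi$ carries the subgroup $\Aut(A,C,\phi)$ of $\Aut(A,C)$ into $\Pic(\phi)$, and since $\Pic(\phi)$ is a subgroup of $\Pic(A,C)$ (stable under the tensor operation and inverses by Lemma~\ref{lem:con7}) and $\pi$ is a group homomorphism, the corestriction $\pi|_{\Aut(A,C,\phi)}$ is a homomorphism of $\Aut(A,C,\phi)$ into $\Pic(\phi)$.

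For part (3) I would first show $\Int(A,C)\subset\Aut(A,C,\phi)$: given a unitary $u\in M(A)$ we have $u\in M(C)$ and $\Ad(u)\in\Aut(A,C)$, and using the bounded $M(A)-M(A)$-bimodule extension $\underline{\phi}\colon M(C)\to M(A)$ of $\phi$ from Section~\ref{sec:St} we get $\phi(u^*cu)=\underline{\phi}(u^*cu)=u^*\phi(c)u$ for all $c\in C$, whence
$$
(\Ad(u)\circ\phi\circ\Ad(u)^{-1})(c)=u\,\phi(u^*cu)\,u^*=uu^*\phi(c)uu^*=\phi(c),
$$
so $\Ad(u)\in\Aut(A,C,\phi)$. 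The map $\imath$ is injective, being an inclusion, so the sequence is exact at $\Int(A,C)$. For exactness at $\Aut(A,C,\phi)$ I would use that $\Ker\bigl(\pi\colon\Aut(A,C)\to\Pic(A,C)\bigr)=\Int(A,C)$ (the standard fact from \cite[Section~3]{Kodaka:Picard2}), which gives $\Ker\bigl(\pi|_{\Aut(A,C,\phi)}\bigr)=\Aut(A,C,\phi)\cap\Int(A,C)=\Int(A,C)=\Ima(\imath)$. The one genuinely computational point is the verification in part~(1) of the defining identity of Lemma~\ref{lem:con5} for the twisted bimodule $(X_\alpha,Y_\alpha)$; once that is in hand, parts (2) and (3) are bookkeeping built on the known exactness of $1\to\Int(A,C)\to\Aut(A,C)\to\Pic(A,C)$.
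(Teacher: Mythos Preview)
Your proof is correct and follows essentially the same route as the paper: part~(1) is verified by checking the defining identity of Lemma~\ref{lem:con5} for $(X_\alpha,Y_\alpha)$ via the same direct computation, part~(2) follows immediately from~(1), and part~(3) combines the $M(A)$-bilinearity of $\underline{\phi}$ with the known exactness of $1\to\Int(A,C)\to\Aut(A,C)\to\Pic(A,C)$ from \cite[Lemma~3.4]{Kodaka:Picard2}.
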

\begin{proof} (1) Let $\alpha\in\Aut(A, C)$. Then for any $c\in C$, $x, z\in X_{\alpha}$,
\begin{align*}
\la x \, , \, (\alpha\circ\phi\circ\alpha^{-1})(c)\cdot z \ra_A & =\la x \, , \, (\alpha\circ\phi\circ\alpha^{-1})(c)z \ra_A \\
& =\alpha^{-1}(x^* (\alpha\circ\phi\circ\alpha^{-1})(c)z) \\
& =\alpha^{-1}(x^* )(\phi\circ\alpha^{-1})(c)\alpha^{-1}(z) .
\end{align*}
On the other hand,
\begin{align*}
\phi(\la x \, , \, c\cdot z \ra_C ) &=\phi(\alpha^{-1}(x^* cz))=\phi(\alpha^{-1}(x^* )\alpha^{-1}(c)\alpha^{-1}(z)) \\
& =\alpha^{-1}(x^* )(\phi\circ\alpha^{-1})(c)\alpha^{-1}(z) .
\end{align*}
Thus by Lemma \ref{lem:con5}, $f_{[X_{\alpha}, Y_{\alpha}]}(\phi)=\alpha\circ\phi\circ\alpha^{-1}$.
\newline
(2) Let $\alpha$ be any element in $\Aut(A, C, \phi)$. Then by (1),
$f_{[X_{\alpha}, Y_{\alpha}]}(\phi)=\alpha\circ\phi\circ\alpha^{-1}=\phi$. Hence
$[X_{\alpha}, Y_{\alpha}]\in\Pic(\phi)$.
\newline
(3) Let $\Ad(u)\in\Int (A, C)$. Then $u\in M(A)\subset M(C)$.
For any $c\in C$,
$$
(\Ad(u)\circ\phi\circ\Ad(u^*))(c) =u\phi(u^* cu)u^*=uu^* \phi(c)uu^* =\phi(c)
$$
since $\underline{\phi}(u)=u$. Thus $\Int(A, C)\subset \Aut(A, C, \phi)$.
It is clear by \cite [Lemma 3.4]{Kodaka:Picard2} that the sequence
$$
1\longrightarrow \Int(A, C)\overset{\imath}{\longrightarrow}\Aut(A, C, \phi)\overset{\pi}{\longrightarrow}\Pic(\phi)
$$
is exact.
\end{proof}

\begin{prop}\label{prop:Pg4} Let $A\subset C$ be an inclusion of $C^*$-algebras with $\overline{AC}=C$
and we suppose that $A$ is $\sigma$-unital. Let $\phi\in {}_{A^s} \BB_{A^s}(C^s, A^s )$. Then
the sequence
$$
1\longrightarrow \Int(A^s , C^s )\overset{\imath}{\longrightarrow}\Aut(A^s , C^s , \phi)
\overset{\pi}{\longrightarrow}\Pic(\phi)\longrightarrow 1
$$
is exact.
\end{prop}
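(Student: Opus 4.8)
The plan is to reduce the statement to Lemma~\ref{lem:Pg3}(3) together with the surjectivity of $\pi$ at the level of the full Picard group of the stable inclusion $A^s\subset C^s$.

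Since $\overline{A^sC^s}=\overline{AC}\otimes\BK=C^s$, Lemma~\ref{lem:Pg3}(3) applied to the inclusion $A^s\subset C^s$ and to $\phi\in{}_{A^s}\BB_{A^s}(C^s,A^s)$ already gives that
$$
1\longrightarrow\Int(A^s,C^s)\overset{\imath}{\longrightarrow}\Aut(A^s,C^s,\phi)\overset{\pi}{\longrightarrow}\Pic(\phi)
$$
is exact. So the only thing left is to prove that $\pi\colon\Aut(A^s,C^s,\phi)\to\Pic(\phi)$ is surjective. For this it is enough to know that $\pi\colon\Aut(A^s,C^s)\to\Pic(A^s,C^s)$ is surjective, i.e.\ that every class in $\Pic(A^s,C^s)$ has the form $[X_\alpha,Y_\alpha]$ for some $\alpha\in\Aut(A^s,C^s)$: indeed, given $[X,Y]\in\Pic(\phi)$, pick such an $\alpha$ with $[X_\alpha,Y_\alpha]=[X,Y]$; by Lemma~\ref{lem:con6} we have $f_{[X_\alpha,Y_\alpha]}=f_{[X,Y]}$, and by Lemma~\ref{lem:Pg3}(1) we have $f_{[X_\alpha,Y_\alpha]}(\phi)=\alpha\circ\phi\circ\alpha^{-1}$; since $f_{[X,Y]}(\phi)=\phi$ this forces $\alpha\circ\phi\circ\alpha^{-1}=\phi$, so $\alpha\in\Aut(A^s,C^s,\phi)$ and $\pi(\alpha)=[X,Y]$. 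Hence $\pi|_{\Aut(A^s,C^s,\phi)}$ is onto $\Pic(\phi)$ and the sequence is exact everywhere.

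It remains to establish the surjectivity of $\pi\colon\Aut(A^s,C^s)\to\Pic(A^s,C^s)$. Given $[X,Y]\in\Pic(A^s,C^s)$, form the linking $C^*$-algebras $L_X\subset L_Y$ with full projections $p_1,p_2\in M(L_X)$ satisfying $p_1+p_2=1_{M(L_X)}$, $p_1L_Xp_1\cong A^s\cong p_2L_Xp_2$, $p_1L_Yp_1\cong C^s\cong p_2L_Yp_2$ and $\overline{L_XL_Y}=L_Y$, so that $M(L_X)\subset M(L_Y)$ by \cite[Section 3.12.12]{Pedersen:auto}. Because $A$ is $\sigma$-unital, $A^s=A\otimes\BK$ is a stable $\sigma$-unital $C^*$-algebra, so $X\oplus A^s\cong A^s$ as right Hilbert $A^s$-modules (by the Kasparov stabilization theorem together with stability of $A^s$), whence $L_X\cong\BK_{A^s}(X\oplus A^s)\cong\BK_{A^s}(A^s)\cong A^s$ is again stable and $\sigma$-unital. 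Then by Brown \cite[Lemma 2.5]{Brown:hereditary} the full projections $p_1,p_2$ are Murray--von Neumann equivalent in $M(L_X)$; choosing $w\in M(L_X)\subset M(L_Y)$ with $w^*w=p_1$, $ww^*=p_2$ and conjugating by $w$ exactly as in the construction preceding Lemma~\ref{lem:St3} (see also \cite{BGR:linking}), we obtain an automorphism $\alpha$ of $C^s$ with $\alpha|_{A^s}\in\Aut(A^s)$, i.e.\ $\alpha\in\Aut(A^s,C^s)$, such that $(X_\alpha,Y_\alpha)\sim(X,Y)$ in $\Equi(A^s,C^s)$; hence $\pi(\alpha)=[X,Y]$.

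The main obstacle is this last step: one has to verify carefully that $L_X$ is stable and, more delicately, that the automorphism $\alpha$ extracted from $w$ both restricts correctly to $A^s$ and actually induces the class $[X,Y]$ — the same bookkeeping with linking algebras and their multiplier algebras that already drives Section~\ref{sec:St}. Alternatively, the surjectivity of $\pi\colon\Aut(A^s,C^s)\to\Pic(A^s,C^s)$ for $\sigma$-unital $A$ may be invoked directly from the theory of Picard groups of stable inclusions of $C^*$-algebras (cf.\ \cite{Kodaka:Picard2}), which then immediately yields the claim via the argument in the second paragraph.
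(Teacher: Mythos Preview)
Your proof is correct and follows essentially the same route as the paper: reduce to surjectivity via Lemma~\ref{lem:Pg3}(3), then for a given $[X,Y]\in\Pic(\phi)$ produce $\alpha\in\Aut(A^s,C^s)$ with $\pi(\alpha)=[X,Y]$ and use Lemmas~\ref{lem:con6} and~\ref{lem:Pg3}(1) to conclude $\alpha\in\Aut(A^s,C^s,\phi)$. The only difference is that the paper simply cites \cite[Proposition~3.5]{Kodaka:Picard2} for the surjectivity of $\pi\colon\Aut(A^s,C^s)\to\Pic(A^s,C^s)$, whereas you additionally sketch its proof via linking algebras and Brown's lemma (and then also mention the citation as an alternative).
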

\begin{proof} It suffices to show that $\pi$ is surjective by Lemma \ref{lem:Pg3} (3). Let
$[X, Y]$ be any element in $\Pic(\phi)$. Then by \cite [Proposition 3.5]{Kodaka:Picard2},
there is an element $\alpha\in \Aut (A^s , C^s )$ such that
$$
\pi(\alpha)=[X, Y]
$$
in $\Pic(A, C)$. Since $[X, Y]\in\Pic(\phi)$, $f_{[X, Y]}(\phi)=\phi$. Also, by Lemma \ref{lem:con6},
$f_{[X, Y]}=f_{[X_{\alpha}, Y_{\alpha}]}$, where $[X_{\alpha}, Y_{\alpha}]$ is the element in
$\Pic(A, C)$ induced by $\alpha$. Hence
$$
f_{[X_{\alpha}, Y_{\alpha}]}(\phi)=f_{[X, Y]}(\phi)=\phi .
$$
Since $f_{[X_{\alpha}, Y_{\alpha}]}(\phi)=\alpha\circ\phi\circ\alpha^{-1}$ by Lemma \ref{lem:Pg3}(1),
$\phi=\alpha\circ\phi\circ\alpha^{-1}$. Hence $\alpha\in\Aut (A^s , C^s , \phi)$.
\end{proof}

\section{The $C^*$-basic construction}\label{sec:basic} Let $A\subset C$ be a unital inclusion of
unital $C^*$-algebras and let $E^A$ be a conditional expectation of Watatani index-finite type from
$C$ onto $A$. Let $e_A$ be the Jones' projection for $E^A$ and $C_1$ the $C^*$-basic construction
for $E^A$. Let $E^C$ be its dual conditional expectation from $C_1$ onto $C$. Let $e_C$ be the
Jones' projection for $E^C$ and $C_2$ the $C^*$-basic construction for $E^C$ Let $E^{C_1}$ be the dual
conditional expectation of $E^C$ from $C_2$ onto $C_1$. Since $E^A$ and $E^C$ are of Watatani
index-finite type, $C$ and $C_1$ can be regarded as a $C_1 -A$-equivalence bimodule and a
$C_2 -C$-equivalence bimodule
induced by $E^A$ and $E^C$, respectively. We suppose that the Watatani index of $E^A$, $\Ind_W (E^A )\in A$.
Then by \cite [Examples]{KT4:morita}, inclusions $A\subset C$ and $C_1 \subset C_2$ are strongly
Morita equivalent with respect to the $C_2 -C$ equivalence bimodule $C_1$ and its closed subspace $C$,
where we regard $C$ as a closed subspace of $C_1$ by the map
$$
\theta_C (x)=\Ind_W (E^A )^{\frac{1}{2}}xe_A
$$
for any $x\in C$ (See \cite [Examples]{KT4:morita}).

\begin{lemma}\label{lem:basic1} With the above notation, we suppose that
$\Ind_W (E^A)\in A$. Then $E^A$ and $E^{C_1}$ are strongly Morita equivalent
with respect to $(C, C_1 )\in \Equi(C_1 , C_2 , A, C)$.
\end{lemma}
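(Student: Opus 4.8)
The plan is to reduce the assertion, via Definition~\ref{def:SM1} and Lemma~\ref{lem:con5}, to a single concrete identity relating the dual conditional expectation $E^{C_1}$ to $E^A$ on the equivalence bimodule $(C,C_1)$, and then to verify that identity by a direct computation with the Jones projections $e_A$ and $e_C$.

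First I would unwind Definition~\ref{def:SM1}. The pair $(C,C_1)$ lies in $\Equi(C_1,C_2,A,C)$, so in the notation of Section~\ref{sec:con} the inclusion $C_1\subset C_2$ plays the role of ``$A\subset C$'', the inclusion $A\subset C$ plays the role of ``$B\subset D$'', the $C_2$--$C$--equivalence bimodule ``$Y$'' is $C_1$, and its closed subspace ``$X$'' is $\theta_C(C)\subset C_1$. Thus $f_{[C,C_1]}$ is an isometric isomorphism of ${}_A\BB_A(C,A)$ onto ${}_{C_1}\BB_{C_1}(C_2,C_1)$, and the assertion of the lemma is precisely that $f_{[C,C_1]}(E^A)=E^{C_1}$. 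By Lemma~\ref{lem:con5}, $f_{[C,C_1]}(E^A)$ is the unique linear map from $C_2$ to $C_1$ satisfying
$$
\la x\, ,\, f_{[C,C_1]}(E^A)(\gamma)\cdot z\ra_A=E^A(\la x\, ,\, \gamma\cdot z\ra_C)
$$
for all $\gamma\in C_2$ and $x,z\in\theta_C(C)$, where $\la\cdot\, ,\cdot\ra_A$ is the right $A$-valued inner product on $\theta_C(C)$, $\la\cdot\, ,\cdot\ra_C$ is the right $C$-valued inner product on $C_1$, $\gamma\cdot z$ is the left $C_2$-action on $C_1$ and $f_{[C,C_1]}(E^A)(\gamma)\cdot z$ is the left $C_1$-action on $\theta_C(C)$. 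Hence it suffices to check that $E^{C_1}$ satisfies
$$
\la x\, ,\, E^{C_1}(\gamma)\cdot z\ra_A=E^A(\la x\, ,\, \gamma\cdot z\ra_C)\qquad(\gamma\in C_2,\ x,z\in\theta_C(C)).
$$
I note in passing that, by Proposition~\ref{prop:BP2} and Corollary~\ref{cor:BP9}, $f_{[C,C_1]}(E^A)$ is already known to be a conditional expectation of Watatani index-finite type from $C_2$ onto $C_1$; only its identification with the distinguished expectation $E^{C_1}$ remains.

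Next I would record the explicit data from \cite[Examples]{KT4:morita} and \cite{Watatani:index}. Put $\lambda=\Ind_W(E^A)\in A$; then $\lambda$ is central in $C$, hence in $C_1$, one has $E^C(e_A)=\lambda^{-1}$, and $\Ind_W(E^C)=\lambda$. The embedding is $\theta_C(c)=\lambda^{\frac12}c\,e_A$; the $A$-valued inner product on $\theta_C(C)$ is $\la\theta_C(c)\, ,\,\theta_C(c')\ra_A=E^A(c^*c')$; on $Y=C_1$ the right $C$-action is multiplication and the $C$-valued inner product is $\la m\, ,\, m'\ra_C=E^C(m^*m')$, while the left $C_2$-action is determined by letting $C_1\subset C_2$ act by left multiplication and $e_C$ act as $E^C$; and the left $C_1$-action on $\theta_C(C)$ is left multiplication inside $C_1$ followed by the identification $C_1e_A\cong C$. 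Finally $E^{C_1}$, being the dual conditional expectation of $E^C$, satisfies $E^{C_1}(m\,e_C\,m')=\lambda^{-1}mm'$ for $m,m'\in C_1$, and $C_2$ is linearly spanned by $\{m\,e_C\,m':m,m'\in C_1\}$ while $C_1$ is linearly spanned by $\{c\,e_A\,d:c,d\in C\}$.

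The verification of the displayed identity then proceeds as follows. Both sides are bounded and linear in $\gamma$, so it is enough to take $\gamma=m\,e_C\,m'$ with $m,m'\in C_1$, then $m$ and $m'$ of the form $c\,e_A\,d$, and to put $x=\theta_C(c_1)$, $z=\theta_C(c_2)$. On the left, $E^{C_1}(m\,e_C\,m')=\lambda^{-1}mm'$, so the left side equals $\la\theta_C(c_1)\, ,\,(\lambda^{-1}mm')\cdot\theta_C(c_2)\ra_A$; on the right, $m\,e_C\,m'\cdot\theta_C(c_2)=m\,E^C(m'\theta_C(c_2))$, so the right side equals $E^A(\la\theta_C(c_1)\, ,\, m\,E^C(m'\theta_C(c_2))\ra_C)$. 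Expanding $m$ and $m'$ and applying repeatedly $e_A\,c\,e_A=E^A(c)e_A$, $e_C\,\gamma\,e_C=E^C(\gamma)e_C$, $E^C(e_A)=\lambda^{-1}$ and the $A$-bimodularity of $E^A$, both sides collapse to the same element of $A$; the hypothesis $\lambda\in A$ enters precisely here, to move $\lambda$ across $E^A$ (e.g.\ $E^A(c\lambda d)=\lambda E^A(cd)$, using centrality of $\lambda$). The main obstacle I anticipate is exactly this last bookkeeping: three separate normalizations feed into the identity---the factor $\lambda^{\frac12}$ from $\theta_C$ in each of the two entries, the factor $\lambda^{-1}$ from the dual expectation $E^{C_1}$, and a factor $\lambda^{-1}=E^C(e_A)$ produced whenever $e_C$ meets $e_A$---and one must check that these powers of $\lambda$ cancel exactly. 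There is no further conceptual difficulty, since the machinery of Section~\ref{sec:con} (through Lemma~\ref{lem:con5}) has already reduced the whole statement to this single computation.
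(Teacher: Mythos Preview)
Your proposal is correct and follows essentially the same route as the paper: reduce via Lemma~\ref{lem:con5} to the single identity $\la x,\,E^{C_1}(\gamma)\cdot z\ra_A=E^A(\la x,\,\gamma\cdot z\ra_C)$, test it on generators $\gamma=c_1 e_A c_2\,e_C\,d_1 e_A d_2$, and compute both sides with the rules $e_A c e_A=E^A(c)e_A$, $E^{C_1}(m e_C m')=\lambda^{-1}mm'$, $E^C(e_A)=\lambda^{-1}$. The paper carries out exactly this computation explicitly, arriving at $\lambda^{-1}E^A(x^*c_1)E^A(c_2 d_1)E^A(d_2 z)$ on each side; your anticipation of where $\lambda\in A$ is needed (to pull $\lambda^{-1}$ through $E^A$) is accurate.
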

\begin{proof} By \cite [Lemma 4.2]{KT4:morita}, $A\subset C$ and $C_1 \subset C_2$ are
strongly Morita equivalent with respect to $(C, C_1 )\in\Equi (C_1 , C_2 , A, C)$. Since we regard
$C$ as a closed subspace of $C_1$ by the linear map $\theta_C$,
we have only to show that
$$
\la x \, , \, E^{C_1} (c_1 e_A c_2 e_C d_1 e_A d_2 )\cdot z \ra_A
=E^A (\la \theta_C (x) \, , \, c_1 e_A c_2 e_C d_1 e_A d_2 \cdot \theta_C (z) \ra_C )
$$
for any $c_1, c_2 , d_1 , d_2 \in C$, $x, z\in C$. Indeed,
\begin{align*}
\la x \, , \, E^{C_1} (c_1 e_A c_2 e_C d_1 e_A d_2 )\cdot z \ra_A 
& =\la x \, , \, \Ind_W (E^A )^{-1}c_1 e_A c_2 d_1 e_A d_2 \cdot z \ra_A \\
& =\Ind_W (E^A )^{-1} \la x \, , \, c_1 E^A ( c_2 d_1 ) E^A (d_2 z) \ra_A \\
& =\Ind_W (E^A )^{-1}E^A (x^* c_1 )E^A (c_2 d_1 )E^A (d_2 z) .
\end{align*}
for any $c_1, c_2 , d_1 , d_2 \in C$, $x, z\in C$. On the other hand,
\begin{align*}
E^A (\la \theta_C (x) \, , & \, c_1 e_A c_2 e_C d_1 e_A d_2 \cdot \theta_C (z) \ra_C ) \\
& =\Ind_W (E^A )E^A (\la xe_A \, , \, c_1 e_A c_2 E^C (d_1 e_A d_2 ze_A ) \ra_C ) \\
& =E^A (\la xe_A \, , \, c_1 e_A c_2 d_1 E^A (d_2 z) \ra_C ) \\
& =E^A (E^C (e_A x^* c_1 e_A c_2 d_1 ))E^A (d_2 z) \\
& =E^A (x^* c_1 )E^A (E^C (e_A c_2 d_1 ))E^A (d_2 z) \\
& =\Ind_W (E^A )^{-1}E^A (x^* c_1 )E^A (c_2 d_1 )E^A (d_2 z) .
\end{align*}
Hence
$$
\la x \, , \, E^{C_1} (c_1 e_A c_2 e_C d_1 e_A d_2 )\cdot z \ra_A
=E^A (\la \theta_C (x) \, , \, c_1 e_A c_2 e_C d_1 e_A d_2 \cdot \theta_C (z) \ra_C )
$$
for any $c_1, c_2 , d_1 , d_2 \in C$, $x, z\in C$. 
Thus by Lemma \ref{lem:con5}, $f_{[C, C_1 ]}(E^A )=E^{C_1}$.
Therefore, we obtain the conclusion.
\end{proof}

Let $B\subset D$ be another unital inclusion of unital $C^*$-algebras
and let $E^B$ be a conditional expectation of Watatani index-finite type from
$D$ onto $B$. Let $e_B, D_1 , E^D$, $e_D , D_2 , E^{D_1}$ be as above.

\begin{lemma}\label{lem:basic2} With the above notation, we suppose that
$E^A$ and $E^B$ are strongly Morita equivalent with respect to $(X, Y)\in\Equi (A, C, B, D)$.
Then $E^C$ and $E^D$ are strongly Morita equivalent.
\end{lemma}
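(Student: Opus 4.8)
The plan is to realize $C_1$ and $D_1$ as corners of a single $C^*$-basic construction and then to quote Lemma \ref{lem:SM3}. Since $f_{[X,Y]}(E^B)=E^A$ and $E^A,E^B$ are selfadjoint, the construction preceding Lemma \ref{lem:SM2} applies and produces an inclusion $L_X\subset L_Y$ of linking $C^*$-algebras (both unital here), full projections $p,q\in L_X$ with $p+q=1$, isomorphisms $pL_Xp\cong A$, $pL_Yp\cong C$, $qL_Xq\cong B$, $qL_Yq\cong D$, and a selfadjoint $\rho\in{}_{L_X}\BB_{L_X}(L_Y,L_X)$ with $\rho|_{pL_Yp}=E^A$ and $\rho|_{qL_Yq}=E^B$.

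First I would verify that $\rho$ is a conditional expectation from $L_Y$ onto $L_X$ of Watatani index-finite type. By Lemma \ref{lem:St2}, applied to $L_X\subset L_Y$ and the full projection $p$, one gets $f_{[pL_X,pL_Y]}(\rho)=\rho|_{pL_Yp}=E^A$, hence $\rho=f_{[L_Xp,L_Yp]}(E^A)$ with $(L_Xp,L_Yp)\in\Equi(L_X,L_Y,A,C)$. Because $E^A$ is a conditional expectation from $C$ onto $A$ of Watatani index-finite type, Proposition \ref{prop:BP2} and Corollary \ref{cor:BP9} then give the claim for $\rho$. Next I would pass to the $C^*$-basic construction $(L_Y)_1$ of $L_X\subset L_Y$ for $\rho$, with Jones projection $e$ and dual conditional expectation $E^{L_Y}\colon (L_Y)_1\to L_Y$, again of index-finite type. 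Here $p,q\in L_X\subset L_Y\subset (L_Y)_1$ are full projections in $L_Y$ and in $(L_Y)_1$ (routine, from $\overline{L_XL_Y}=L_Y$ and $\overline{L_Y(L_Y)_1}=(L_Y)_1$), they commute with $e$ because $e$ commutes with $L_X$, and $p+q=1$.

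The crucial input is that the Watatani $C^*$-basic construction is compatible with reduction by a full projection in the small algebra: $p(L_Y)_1p$ is the $C^*$-basic construction of $pL_Xp=A\subset pL_Yp=C$ for $\rho|_{pL_Yp}=E^A$, with Jones projection $pep=ep$ and dual conditional expectation $E^{L_Y}|_{p(L_Y)_1p}$, and similarly for $q$. Granting this, the identifications $A\cong pL_Xp$, $C\cong pL_Yp$, $B\cong qL_Xq$, $D\cong qL_Yq$ extend to $C_1\cong p(L_Y)_1p$ with $E^C\cong E^{L_Y}|_{p(L_Y)_1p}$ and $D_1\cong q(L_Y)_1q$ with $E^D\cong E^{L_Y}|_{q(L_Y)_1q}$. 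I would finish by applying Lemma \ref{lem:SM3} to the inclusion $L_Y\subset (L_Y)_1$ (with $\overline{L_Y(L_Y)_1}=(L_Y)_1$), the full projections $p,q\in L_Y$ with $p+q=1$, and the selfadjoint element $E^{L_Y}\in{}_{L_Y}\BB_{L_Y}((L_Y)_1,L_Y)$, whose restrictions to $p(L_Y)_1p$ and $q(L_Y)_1q$ are $E^C$ and $E^D$; this yields the strong Morita equivalence of $E^C$ and $E^D$. In the unital setting at hand one may instead avoid linking algebras altogether and argue directly through Lemma \ref{lem:St8}: using the matrix picture $A\cong pM_n(B)p$, $C\cong pM_n(D)p$ of $(X,Y)$ from \cite{KT4:morita}, one identifies $C\subset C_1$ with $pM_n(D)p\subset pM_n(D_1)p$ ($p\in M_n(B)$ being full also in $M_n(D_1)$) carrying $E^C$ to $(E^D\otimes\id_{M_n(\BC)})|_{pM_n(D_1)p}$, and then reads off the required pair from Lemma \ref{lem:St8}.

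The main obstacle is the compatibility statement in the previous paragraph. The inclusion of the closed linear span of $(pL_Yp)(ep)(pL_Yp)$ into $p(L_Y)_1p$ is clear, but for the reverse inclusion one must show that the off-diagonal contributions $(pL_Yq)\,e\,(qL_Yp)$ are absorbed; this is exactly where the fullness of $p$ in $L_X$ is used, via $ep=pep$ together with inserting approximate factorizations of $1$ through $p$. The corner behaviour of the dual conditional expectation is a similar check. Both are standard features of the Watatani basic construction, so in the write-up they would be cited or carried out as routine computations, after which the rest is bookkeeping with the results of Sections \ref{sec:con}--\ref{sec:BP}. Note that, in contrast with Lemma \ref{lem:basic1}, this argument does not need the hypothesis $\Ind_W(E^A)\in A$.
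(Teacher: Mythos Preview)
Your argument is sound, but it takes a genuinely different route from the paper's. The paper dispatches the lemma in two lines by invoking the \emph{upward basic construction} of the bimodule $Y$ developed in \cite[Section~6]{KT4:morita}: the map $\tau$ of Proposition~\ref{prop:con4} associated to $E^A,E^B$ is a conditional expectation $E^X$ from $Y$ onto $X$ in the sense of \cite[Definition~2.4]{KT4:morita}, one forms the upward basic construction $Y_1$ of $Y$ for $E^X$ as in \cite[Definition~6.5]{KT4:morita}, and then \cite[Corollary~6.3, Lemma~6.4]{KT4:morita} yield $(Y,Y_1)\in\Equi(C,C_1,D,D_1)$ with $f_{[Y,Y_1]}(E^D)=E^C$ directly. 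Your linking-algebra approach reaches the same conclusion through Lemma~\ref{lem:SM3}, trading that external citation for a verification inside the present paper that Watatani's basic construction is compatible with reduction by a full projection of the small algebra; the fullness argument you outline (inserting $\sum_j a_jpb_j=1$ and using that $e$ commutes with $L_X$) does handle the off-diagonal pieces $(pL_Yq)e(qL_Yp)$, and the index/dual-expectation check goes through because $\Ind_W(\rho)\in L_X'\cap L_Y$ commutes with $p$. The matrix-picture alternative you sketch at the end is in fact essentially how \cite{KT4:morita} realizes $Y_1$ concretely, so that variant is quite close to the paper's proof. The paper's version is shorter because the corner-compatibility you flag as the main obstacle has already been packaged into \cite{KT4:morita}; your version is more self-contained relative to Sections~\ref{sec:con}--\ref{sec:BP}. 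Both correctly avoid the hypothesis $\Ind_W(E^A)\in A$.
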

\begin{proof} Since $E^A$ and $E^B$ are strongly Morita equivalent with respect to
$(X, Y)\in\Equi(A, C, B, D)$, there is the unique linear map $E^X$
from $Y$ to $X$, which is called a conditional expectation from $Y$ onto $X$ satisfying
Conditions (1)-(6) in \cite[Definition 2.4]{KT4:morita}. Let $Y_1$ be the upward basic construction
of $Y$ for $E^X$ defined in \cite[Definition 6.5]{KT4:morita}. Then by \cite[Corollary 6.3 and Lemma 6.4]
{KT4:morita}, $f_{[Y, Y_1]}(E^D )=E^C$, that is, $E^C$ and $E^D$ are strongly Morita equivalent
with respect to $(Y, Y_1 )\in\Equi(C, C_1 , D, D_1 )$.
\end{proof}

\begin{lemma}\label{lem:basic3} With the above notation, we suppose that $\Ind_W(E^A )\in A$. If
$E^C$ and $E^D$ are strongly Morita equivalent with respect to $(Y, Z)\in\Equi(C, C_1 , D, D_1 )$, then
$E^A$ and $E^B$ are strongly Morita equivalent.
\end{lemma}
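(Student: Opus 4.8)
The plan is to realize $E^A$ and $E^B$ as the two ends of a chain of strong Morita equivalences of conditional expectations and to conclude by transitivity (Remark \ref{remark:SM1-2}). The three links are: $(i)$ $E^A\sim E^{C_1}$ with respect to $(C,C_1)\in\Equi(C_1,C_2,A,C)$, which is Lemma \ref{lem:basic1}, available because $\Ind_W(E^A)\in A$; $(ii)$ $E^{C_1}\sim E^{D_1}$, obtained by applying Lemma \ref{lem:basic2} to the inclusions $C\subset C_1$ and $D\subset D_1$ in place of $A\subset C$ and $B\subset D$ — this is legitimate since $E^C,E^D$ are themselves conditional expectations of Watatani index-finite type, being dual to $E^A,E^B$, and they are strongly Morita equivalent with respect to the given $(Y,Z)\in\Equi(C,C_1,D,D_1)$, so the lemma produces a strong Morita equivalence of their duals $E^{C_1},E^{D_1}$; $(iii)$ $E^B\sim E^{D_1}$, which is Lemma \ref{lem:basic1} for the inclusion $B\subset D$, available provided $\Ind_W(E^B)\in B$. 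Chaining $(i)$--$(iii)$ and using that strong Morita equivalence of bimodule maps is an equivalence relation then gives $E^A\sim E^B$.

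Everything thus reduces to the claim $\Ind_W(E^B)\in B$, and establishing it is what I expect to be the main obstacle. The route I would take rests on two facts about the Watatani index. First, for a conditional expectation $E^A$ of index-finite type and its dual $E^C$, one has $\Ind_W(E^A)\in A$ if and only if $\Ind_W(E^C)\in C$: the two indices correspond under Watatani's isomorphism of the relative commutants $A'\cap C$ and $C'\cap C_1$, and the explicit form of that isomorphism converts one membership statement into the other; likewise $\Ind_W(E^B)\in B$ if and only if $\Ind_W(E^D)\in D$. So it suffices to prove $\Ind_W(E^D)\in D$, knowing $\Ind_W(E^C)\in C$. Second, $f_{[Y,Z]}$, which takes $E^D$ to $E^C$, induces an isomorphism $\pi$ of $D'\cap D_1$ onto $C'\cap C_1$ carrying $\Ind_W(E^D)$ to $\Ind_W(E^C)$ (the relative-commutant isomorphism attached to a strong Morita equivalence of conditional expectations, as with the isomorphism $\pi$ of the introduction applied to $C\subset C_1$ and $D\subset D_1$); and, being built from an equivalence bimodule implementing the equivalence of the two pairs, $\pi$ matches the part of $D'\cap D_1$ coming from $D$ with the part of $C'\cap C_1$ coming from $C$. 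Hence $\Ind_W(E^C)\in C$ forces $\Ind_W(E^D)=\pi^{-1}(\Ind_W(E^C))\in D$, and therefore $\Ind_W(E^B)\in B$, completing the chain. The delicate point is exactly this last step — verifying that $\pi$ respects both the indices and the relevant subalgebras; the rest is assembly of Lemmas \ref{lem:basic1} and \ref{lem:basic2} with Remark \ref{remark:SM1-2}.

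If one prefers to avoid passing through $C_2$, $D_2$ and this index bookkeeping, there is a second, more hands-on route patterned on the proof of Lemma \ref{lem:basic2}: construct a ``downward'' basic construction of the pair $(Y,Z)$. Using the Jones projection $e_A\in C_1$ — this is where $\Ind_W(E^A)\in A$ enters, exactly as in the embedding $\theta_C$ of Lemma \ref{lem:basic1} — one cuts $Z$ down to a $C-D$-equivalence bimodule $Y'$ together with a closed subspace $X'$ that is an $A-B$-equivalence bimodule, checks that $(X',Y')$ satisfies Conditions $(1),(2)$ of \cite[Definition 2.1]{KT4:morita}, so that $(X',Y')\in\Equi(A,C,B,D)$, and finally verifies by Lemma \ref{lem:con5} that $f_{[X',Y']}(E^B)=E^A$. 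The two nontrivial checks are that the cut-down pair genuinely lies in $\Equi(A,C,B,D)$ and the inner-product identity computing $f_{[X',Y']}(E^B)$.
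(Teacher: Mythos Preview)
Your first route is exactly the paper's proof: the same three-link chain $E^A\sim E^{C_1}$ via Lemma~\ref{lem:basic1}, $E^{C_1}\sim E^{D_1}$ via Lemma~\ref{lem:basic2} applied one level up, and $E^{D_1}\sim E^B$ via Lemma~\ref{lem:basic1}, assembled by Lemma~\ref{lem:con7}. The one point where you work harder than necessary is the claim $\Ind_W(E^B)\in B$: the paper simply invokes \cite[Lemma~6.7]{KT4:morita} for this, so your index-transfer sketch (and the alternative downward-basic-construction route) is not needed.
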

\begin{proof} By Lemma \ref{lem:basic2}, there is an element $(Z, Z_1 )\in\Equi(C_1 , C_2 , D_1 , D_2 )$
such that $f_{[Z, Z_1 ]}(E^{D_1})=E^{C_1}$. Since $\Ind_W (E^B )\in B$ by \cite [Lemma 6.7]{KT4:morita},
$$
f_{[C, C_1 ]}(E^A )=E^{C_1} , \quad f_{[D, D_1 ]}(E^B )=E^{D_1}
$$
by Lemma \ref{lem:basic1}. Thus
$$
[\widetilde{C}\otimes_{C_1} Z\otimes_{D_1} D \, , \,
\widetilde{C_1}\otimes_{C_2}Z_1 \otimes_{D_2}D_1 ]\in\Equi(A, C, B, D)
$$
and
$$
f_{[\widetilde{C}\otimes_{C_1} Z\otimes_{D_1} D \, , \,
\widetilde{C_1}\otimes_{C_2}Z_1 \otimes_{D_2}D_1 ]}(E^B )  =
(f_{[C, C_1 ]}^{-1}\circ f_{[Z, Z_1 ]}\circ f_{[D, D_1 ]})(E^B )
=E^A
$$
by Lemma \ref{lem:con7}. Therefore, we obtain the conclusion.
\end{proof}

\begin{prop}\label{prop:basic4} Let $A\subset C$ and $B\subset D$ be unital inclusions of unital
$C^*$-algebras. Let $E^A$ and $E^B$ be conditional expectations from $C$ and $D$ onto $A$ and $B$,
which are of Watatani index-finite type, respectively. Let $E^C$ and $E^D$ be the dual conditional
expectations of $E^A$ and $E^B$, respectively. We suppose that $\Ind_W (E^A )\in A$. Then
the following conditions are equivalent:
\newline
$(1)$ $E^A$ and $E^B$ are strongly Morita equivalent,
\newline
$(2)$ $E^C$ and $E^D$ are strongly Morita equivalent.
\end{prop}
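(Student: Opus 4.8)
The plan is to obtain the two implications directly from the preceding two lemmas, which between them already settle both directions.

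For $(1)\Rightarrow(2)$ I would argue as follows. If $E^A$ and $E^B$ are strongly Morita equivalent, then by Definition \ref{def:SM1} there is a pair $(X,Y)\in\Equi(A,C,B,D)$ with $f_{[X,Y]}(E^B)=E^A$. Feeding this pair into Lemma \ref{lem:basic2} --- which does not even use the hypothesis $\Ind_W(E^A)\in A$ --- produces the conditional expectation $E^X\colon Y\to X$, the upward basic construction $Y_1$ of $Y$ for $E^X$, and the relation $f_{[Y,Y_1]}(E^D)=E^C$ with $(Y,Y_1)\in\Equi(C,C_1,D,D_1)$; hence $E^C$ and $E^D$ are strongly Morita equivalent.

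For $(2)\Rightarrow(1)$ I would unwind the hypothesis in the same way: strong Morita equivalence of $E^C$ and $E^D$ means there is some $(Y,Z)\in\Equi(C,C_1,D,D_1)$ with $f_{[Y,Z]}(E^D)=E^C$. This is precisely the hypothesis of Lemma \ref{lem:basic3}, so, together with the standing assumption $\Ind_W(E^A)\in A$, that lemma yields a pair in $\Equi(A,C,B,D)$ --- concretely $[\widetilde{C}\otimes_{C_1}Z\otimes_{D_1}D\,,\,\widetilde{C_1}\otimes_{C_2}Z_1\otimes_{D_2}D_1]$, after passing through Lemma \ref{lem:basic2} one level up --- carrying $E^B$ to $E^A$; hence $E^A$ and $E^B$ are strongly Morita equivalent. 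Combining the two implications finishes the proof.

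I do not expect any real obstacle, since all of the genuinely technical content (the upward basic construction of an equivalence bimodule, and the index-finiteness bookkeeping that forces $\Ind_W(E^B)\in B$ via \cite[Lemma 6.7]{KT4:morita}) is already carried out inside Lemmas \ref{lem:basic2} and \ref{lem:basic3}. The one minor point to stay attentive to is that those lemmas are phrased with a specified witnessing equivalence bimodule, whereas Definition \ref{def:SM1} asserts only the existence of one, so in each direction I must first extract a witnessing pair from the hypothesis before invoking the relevant lemma; and it is exactly the direction $(2)\Rightarrow(1)$ that consumes the hypothesis $\Ind_W(E^A)\in A$, the other direction being unconditional.
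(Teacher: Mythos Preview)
Your proposal is correct and follows essentially the same approach as the paper, which simply records that the proposition is immediate from the two preceding lemmas. Note that the paper's one-line proof cites Lemmas \ref{lem:basic1} and \ref{lem:basic3}, but this appears to be a slip: the implication $(1)\Rightarrow(2)$ is exactly the content of Lemma \ref{lem:basic2}, as you invoke, while Lemma \ref{lem:basic1} is an ingredient used inside the proof of Lemma \ref{lem:basic3} rather than a direct input to the proposition itself.
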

\begin{proof} This is immediate by Lemmas \ref{lem:basic1} and \ref{lem:basic3}.
\end{proof}

Let $A\subset C$ and $C_1$, $C_2$ be as above. Let $E^A$, $E^C$ and $E^{C_1}$ be also
as above. We suppose that $\Ind_W (E^A )\in A$.
We consider the Picard groups $\Pic(E^A )$ and $\Pic(E^C )$ of $E^A$ and $E^C$, respectively.
For any $[X, Y]\in\Pic(E^A )$, there is the unique conditional expectation $E^X$ from $Y$ onto $X$
satisfying Conditions (1)-(6) in \cite [Definition 2.4]{KT4:morita} since $f_{[X, Y]}(E^A )=E^A$.
Let $F$ be the map from $\Pic(E^A )$ to $\Pic(E^C )$ defined by
$$
F(([X, Y])=[Y, Y_1 ]
$$
for any $[X, Y]\in\Pic(E^A )$, where $Y_1$ is the upward basic construction for $E^X$
and by Proposition \ref{prop:basic4}, $[Y, Y_1 ]\in\Pic(E^C )$.
Since $E^X$ is the unique conditional expectation from $Y$ onto $X$ satisfying
Conditions (1)-(6) in \cite [Definition 2.4]{KT4:morita}
we can see that the same results
as \cite [Lemmas 4.3-4.5]{Kodaka:Picard2} hold. Hence in the same way as in the proof of
\cite [Lemma 5.1]{Kodaka:Picard2}, we obtain 
that $F$ is a homomorphism of $\Pic(E^A )$ to $\Pic(E^C )$. Let $G$ be the map from $\Pic(E^A )$
to $\Pic(E^{C_1} )$ defined by for any $[X, Y]\in\Pic(E^A )$
$$
G([X, Y])=[C\otimes_A X\otimes_A \widetilde{C} \, , \, C_1 \otimes_C Y \otimes_C \widetilde{C_1}] ,
$$
where $(C, C_1 )$ is regarded as an element in $\Equi(C_1 , C_2 , A, C)$. By the proof of Lemma \ref{lem:Pg2},
$G$ is an isomorphism of $\Pic(E^A )$ onto $\Pic(E^{C_1} )$. Let $F_1$ be the homomorphism of $\Pic(E^C )$
to $\Pic(E^{C_1})$ defined as above. Then in the same way as in the proof of \cite [Lemma 5.2]{Kodaka:Picard2},
$F_1 \circ F=G$ on $\Pic(E^A )$. Furthermore, in the same way as in the proofs of
\cite [Lemmas 5.3 and 5.4]{Kodaka:Picard2}, we obtain that $F \circ G^{-1}\circ F_1 =\id$ on $\Pic(E^C )$.
Therefore, we obtain the same result as \cite [Theorem 5.5]{Kodaka:Picard2}.

\begin{thm}\label{thm:basic5} Let $A\subset C$ be a unital inclusion of unital $C^*$-algebras.
We suppose that there is a conditional expectation $E^A$ of Watatani index-finite type from
$C$ onto $A$ and that $\Ind_W (E^A )\in A$. Then $\Pic(E^A )\cong \Pic(E^C )$, where $E^C$ is the
dual conditional expectation of $E^A$ from $C_1$ onto $C$ and $C_1$ is the $C^*$-basic
construction for $E^A$.
\end{thm}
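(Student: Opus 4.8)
The plan is to follow the blueprint laid out in the paragraph preceding the statement, which mirrors the proof of \cite[Theorem 5.5]{Kodaka:Picard2}: one constructs the candidate isomorphism $F\colon\Pic(E^A)\to\Pic(E^C)$ explicitly and verifies bijectivity via two auxiliary maps. First I would recall that, for each $[X,Y]\in\Pic(E^A)$, the equality $f_{[X,Y]}(E^A)=E^A$ forces the existence of the unique conditional expectation $E^X$ from $Y$ onto $X$ compatible with $E^A$ (Conditions (1)--(6) of \cite[Definition 2.4]{KT4:morita}); forming the upward basic construction $Y_1$ of $Y$ for $E^X$ and invoking Proposition \ref{prop:basic4}, one gets $[Y,Y_1]\in\Pic(E^C)$, so $F([X,Y])=[Y,Y_1]$ is well defined, and the analogue of \cite[Lemma 5.1]{Kodaka:Picard2} shows $F$ is a group homomorphism.

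Next I would introduce the auxiliary homomorphism $G\colon\Pic(E^A)\to\Pic(E^{C_1})$, $G([X,Y])=[C\otimes_A X\otimes_A\widetilde C,\,C_1\otimes_C Y\otimes_C\widetilde{C_1}]$, associated to the strong Morita equivalence of $E^A$ and $E^{C_1}$ provided by Lemma \ref{lem:basic1}; by the argument in the proof of Lemma \ref{lem:Pg2}, $G$ is an isomorphism. In parallel, let $F_1\colon\Pic(E^C)\to\Pic(E^{C_1})$ be the homomorphism built from $E^C$ one step higher in the tower, exactly as $F$ was built from $E^A$. The core of the argument is then the two compositional identities
$$
F_1\circ F=G \ \text{ on }\Pic(E^A), \qquad F\circ G^{-1}\circ F_1=\id \ \text{ on }\Pic(E^C),
$$
established in the same way as \cite[Lemmas 5.2, 5.3 and 5.4]{Kodaka:Picard2}, using the uniqueness of the bimodule conditional expectations $E^X$ together with the multiplicativity of the assignment $[X,Y]\mapsto f_{[X,Y]}$ under tensor-product composition of pairs recorded in Lemma \ref{lem:con7}.

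Granting these two identities, the conclusion is purely formal: from $F_1\circ F=G$ and $G$ an isomorphism, $F$ is injective; from $F\circ(G^{-1}\circ F_1)=\id_{\Pic(E^C)}$, $F$ is surjective. Hence $F$ is a group isomorphism of $\Pic(E^A)$ onto $\Pic(E^C)$, which is the assertion.

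I expect the main obstacle to be not the final formal deduction but the verification that the upward basic construction $Y_1$ and the passage to the next level of the tower are genuinely functorial with respect to the tensor-product composition of equivalence-bimodule pairs — i.e.\ that the technical lemmas \cite[Lemmas 4.3--4.5 and 5.1--5.4]{Kodaka:Picard2} transfer verbatim to the present framework, where one works with general bounded bimodule linear maps of index-finite type rather than with arbitrary conditional expectations. Once those identifications are in place, the remaining steps amount to bookkeeping with Lemma \ref{lem:con7} and Lemma \ref{lem:basic1}.
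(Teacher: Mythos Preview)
Your proposal is correct and follows essentially the same approach as the paper: the paper's proof is precisely the paragraph preceding the theorem, constructing $F$, $G$, and $F_1$ exactly as you describe, invoking the analogues of \cite[Lemmas 4.3--4.5, 5.1--5.4]{Kodaka:Picard2} to obtain $F_1\circ F=G$ and $F\circ G^{-1}\circ F_1=\id$, and concluding that $F$ is an isomorphism. Your identification of the main obstacle---transferring those technical lemmas to the present setting---also matches where the paper places the weight of the argument.
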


\section{Modular automorphisms and relative commutants}\label{sec:Mod}
Following Watatani \cite [Section 1.11]{Watatani:index}, we give the definitions of the modular condition and
modular automorphisms.
\par
Let $A\subset C$ be a unital inclusion of unital $C^*$-algebras. Let $\theta$ be an automorphism of $A' \cap C$ and
$\phi$ an element in ${}_A \BB_A (C, A)$.

\begin{Def}\label{def:Mod1} Let $\theta$ and $\phi$ be as above. $\phi$ is said to satisfy the {\sl modular condition}
for $\theta$ if the following condition holds:
$$
\phi(xy)=\phi(y\theta(x))
$$
for any $x\in A' \cap C$, $y\in C$.
\end{Def}
We have the following theorem which was proved by Watatani in \cite {Watatani:index}:

\begin{thm}\label{thm:Mod2} {\rm (cf: \cite [Theorem 1.11.3]{Watatani:index})} Let $\phi\in {}_A \BB_A (C, A)$
and we suppose that there is a quasi-basis for $\phi$. Then there is the unique automorphism $\theta$ of
$A' \cap C$ for which $\phi$ satisfies the modular condition.
\end{thm}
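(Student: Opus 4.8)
The plan is to follow Watatani's original argument and build $\theta$ explicitly from a quasi-basis. Fix a quasi-basis $\{(u_i,v_i)\}_{i=1}^m$ for $\phi$, which exists by hypothesis, and define $\theta\colon A'\cap C\to C$ by $\theta(x)=\sum_{i=1}^m u_i\,\phi(xv_i)$. First I would record the two elementary ``faithfulness'' consequences of the quasi-basis identity $d=\sum_i u_i\phi(v_id)=\sum_i\phi(du_i)v_i$: if $w\in C$ satisfies $\phi(yw)=0$ for every $y\in C$ then $w=\sum_i u_i\phi(v_iw)=0$, and symmetrically if $\phi(wy)=0$ for every $y$ then $w=\sum_i\phi(wu_i)v_i=0$. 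These two facts get used repeatedly.

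Next I would verify the modular identity. For $x\in A'\cap C$ and $y\in C$, expand $y=\sum_i\phi(yu_i)v_i$; since each $\phi(yu_i)\in A$ commutes with $x$, and using the $A$-bimodule linearity of $\phi$, one computes $\phi(xy)=\sum_i\phi(yu_i)\phi(xv_i)=\phi\bigl(y\sum_i u_i\phi(xv_i)\bigr)=\phi(y\theta(x))$. Then I would check that $\theta$ actually lands in $A'\cap C$: for $a\in A$, the identity just proved (applied once to $ya$ and once to $y$, together with the $A$-bimodule property of $\phi$) shows that $\phi(y\theta(x)a)$ and $\phi(ya\theta(x))$ both equal $\phi(xya)$, so $\phi\bigl(y(\theta(x)a-a\theta(x))\bigr)=0$ for all $y\in C$, whence $\theta(x)a=a\theta(x)$ by the faithfulness observation. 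Thus $\theta$ is a genuine map $A'\cap C\to A'\cap C$ and $\phi$ satisfies the modular condition for it.

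To see $\theta$ is an automorphism: linearity is immediate from the formula, and $\theta(1)=\sum_i u_i\phi(v_i)=1$ is the quasi-basis identity at $d=1$. Multiplicativity follows by applying the modular identity twice, $\phi(y\theta(xx'))=\phi(xx'y)=\phi(x'y\,\theta(x))=\phi\bigl(y\,\theta(x)\theta(x')\bigr)$ for all $y$, hence $\theta(xx')=\theta(x)\theta(x')$ by faithfulness. For bijectivity I would introduce the ``opposite'' map $\sigma(x)=\sum_i\phi(u_ix)v_i$; the same kind of computation gives $\sigma(x)\in A'\cap C$ and $\phi(yx)=\phi(\sigma(x)y)$ for all $y\in C$, and then combining the two relations yields $\phi\bigl(y\,\theta(\sigma(x))\bigr)=\phi(\sigma(x)y)=\phi(yx)$ and $\phi\bigl(\sigma(\theta(x))y\bigr)=\phi(y\theta(x))=\phi(xy)$ for all $y$, so $\theta\circ\sigma=\sigma\circ\theta=\id$ by faithfulness. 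Finally, uniqueness: if $\theta'$ also satisfies the modular condition then $\phi\bigl(y(\theta(x)-\theta'(x))\bigr)=0$ for all $y$, so $\theta=\theta'$; in particular $\theta$ is independent of the chosen quasi-basis.

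The content here is entirely Watatani's, so I do not expect a genuine obstacle. The one point requiring care is the logical order in the existence part: one must first establish the identity $\phi(xy)=\phi(y\theta(x))$ with $\theta$ regarded merely as a map into $C$, and only then use that identity to deduce that $\theta(x)$ lies in $A'\cap C$ — so that the modular condition can even be stated. Beyond that it is just bookkeeping, namely tracking which of the two quasi-basis expansions, and correspondingly which side of the faithfulness observation, is needed at each step, and checking that each intermediate element sits in the algebra where the next manipulation is performed.
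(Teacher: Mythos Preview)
Your proposal is correct and follows Watatani's original argument, which is precisely what the paper does: the paper does not give its own proof of this theorem but simply cites \cite[Theorem 1.11.3]{Watatani:index}, and then in Remark~\ref{remark:Mod4} records the same explicit formula $\theta^{\phi}(c)=\sum_i u_i\phi(cv_i)$ that you use. Your write-up is a faithful expansion of that cited proof, so there is nothing to compare.
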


\begin{Def}\label{def:Mod3} The above automorphism $\theta$ of $A' \cap C$ given by $\phi$ in
Theorem \ref{thm:Mod2} is called the {\sl modular automorphism} associated with $\phi$ and denoted by
$\theta^{\phi}$.
\end{Def}

\begin{remark}\label{remark:Mod4} Following the proof of \cite [Theorem 1.11.3]{Watatani:index},
we give how to construct the modular automorphism $\theta^{\phi}$. Let $\{(u_i , v_i )\}_{i=1}^m$ be
a quasi-basis for $\phi$. Put
$$
\theta^{\phi}(c)=\sum_{i=1}^m u_i \phi(cv_i )
$$
for any $c\in A' \cap C$. Then $\theta^{\phi}$ is the unique automorphism of $A' \cap C$ satisfying the
modular condition by the proof of \cite [Theorem 1.11.3]{Watatani:index}.
\end{remark}

Let $A\subset C$ and $B\subset D$ be unital inclusions of unital $C^*$-algebras, which are strongly
Morita equivalent with respect to a $C-D$-equivalence bimodule $Y$ and its closed subspace $X$.
Then by the discussions after Proposition \ref{prop:St5}
$$
A\cong pM_n (B)p \, , \quad C\cong pM_n (D)p ,
$$
where $n$ is some positive integer and $p$ is a full projection in $M_n (B)$.
Also, there is the isometric isomorphism
$$
F: {}_A \BB_A (D, B)\longrightarrow {}_{pM_n (B)p} \BB_{pM_n (B)p} (pM_n (D)p \, ,\, pM_n (B)p) ,
$$
which is defined after Proposition
\ref{prop:St5}. Furthermore, by the proof of \cite [Lemma 10.3]{KT4:morita}, there is the isomorphism $\pi$
of $B' \cap D$ onto $(pM_n (B)p)' \cap pM_n (D)p$ defined by
$$
\pi(d)=(d\otimes I_n )p
$$
for any $d\in B' \cap D$, where we note that $(pM_n (B)p)' \cap pM_n (D)p=(M_n (B)' \cap M_n (D))p$
and that
$$
M_n (B)' \cap M_n (D)=\{d\otimes I_n \, | \, d\in B' \cap D \}.
$$
Thus we can see that
$$
\pi^{-1}((d\otimes I_n )p)=\sum_{j=1}^K a_j (d\otimes I_n )pb_j =d\otimes I_n
$$
for any $d\in B' \cap D$, where $a_1 , a_2 , \dots , a_K , b_1 , b_2 , \dots , b_K$
are elements in $M_n (B)$ with $\sum_{j=1}^K a_j pb_j =1_{M_n (B)}$ and we identify $M_n (B)' \cap M_n (D)$
with $B' \cap D$ by the isomorphism
$$
B' \cap D \to M_n (B)' \cap M_n (D) : d\to d\otimes I_n .
$$

\begin{lemma}\label{lem:Mod5} With the above notation, let $\phi\in {}_B \BB_B (D, B)$
with a quasi-basis for $\phi$. Then $F(\phi)\in {}_{pM_n (B)p} \BB_{pM_n(B)p}(pM_n (D)p \, , \, pM_n (B)p)$
with a quasi-basis for $F(\phi)$ and
$$
\theta^{F(\phi)}=\pi\circ\theta^{\phi}\circ\pi^{-1} .
$$
\end{lemma}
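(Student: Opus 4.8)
The plan is to establish the formula $\theta^{F(\phi)}=\pi\circ\theta^{\phi}\circ\pi^{-1}$ by invoking the uniqueness part of Theorem \ref{thm:Mod2}. First I would note that $F(\phi)$ admits a quasi-basis by Lemma \ref{lem:BP7}, so $\theta^{F(\phi)}$ is well defined and is characterized as the unique automorphism of $(pM_n(B)p)'\cap pM_n(D)p$ for which $F(\phi)$ satisfies the modular condition. Since $\pi$ is an isomorphism of $B'\cap D$ onto $(pM_n(B)p)'\cap pM_n(D)p$ and $\theta^{\phi}$ is an automorphism of $B'\cap D$, the composite $\pi\circ\theta^{\phi}\circ\pi^{-1}$ is an automorphism of $(pM_n(B)p)'\cap pM_n(D)p$; hence it suffices to check that $F(\phi)$ satisfies the modular condition for $\pi\circ\theta^{\phi}\circ\pi^{-1}$.

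To verify this I would take $x'\in(pM_n(B)p)'\cap pM_n(D)p$ and write $x'=(d\otimes I_n)p$ with $d\in B'\cap D$, using the description of the relative commutant recalled before the lemma, so that $\pi^{-1}(x')$ is identified with $d$ and $(\pi\circ\theta^{\phi}\circ\pi^{-1})(x')=(\theta^{\phi}(d)\otimes I_n)p$. The structural facts to exploit are that $F(\phi)=(\phi\otimes\id_{M_n(\BC)})|_{pM_n(D)p}$ and that $p$, being a projection in $M_n(B)$, commutes with $d\otimes I_n$ and with $\theta^{\phi}(d)\otimes I_n$. For $y'\in pM_n(D)p$ one then has $x'y'=(d\otimes I_n)y'$ and $y'\cdot(\pi\circ\theta^{\phi}\circ\pi^{-1})(x')=y'(\theta^{\phi}(d)\otimes I_n)$, both again lying in $pM_n(D)p$. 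Expanding $y'=\sum_{k,l}y'_{kl}\otimes e_{kl}$ with $y'_{kl}\in D$ and applying $\phi\otimes\id_{M_n(\BC)}$ entrywise, the desired identity $F(\phi)(x'y')=F(\phi)(y'\cdot(\pi\circ\theta^{\phi}\circ\pi^{-1})(x'))$ reduces to the modular condition $\phi(d\,y'_{kl})=\phi(y'_{kl}\,\theta^{\phi}(d))$ for $\phi$, which holds since $d\in B'\cap D$. Uniqueness in Theorem \ref{thm:Mod2} then gives the conclusion.

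The point requiring care is bookkeeping rather than depth: one must correctly handle the identifications $A\cong pM_n(B)p$, $C\cong pM_n(D)p$, the exact form $(pM_n(B)p)'\cap pM_n(D)p=(M_n(B)'\cap M_n(D))p$ with $M_n(B)'\cap M_n(D)=\{d\otimes I_n \mid d\in B'\cap D\}$, and the commutation of $p$ with $B'\cap D$ tensored with $I_n$; once these are in place the modular condition for $F(\phi)$ follows from that for $\phi$ by the componentwise computation above. An alternative, more computational route would compute $\theta^{F(\phi)}$ directly from the explicit quasi-basis for $F(\phi)$ furnished by Lemma \ref{lem:BP7} via the formula in Remark \ref{remark:Mod4}, but the uniqueness argument is cleaner and avoids manipulating the elements $a_j,b_j$.
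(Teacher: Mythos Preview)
Your argument is correct and constitutes a genuinely different route from the paper's. The paper takes precisely the ``alternative, more computational route'' you mention at the end: it invokes the explicit quasi-basis $\{(p(u_i\otimes I_n)a_j p,\, pb_j(v_i\otimes I_n)p)\}$ from Lemma~\ref{lem:BP7} and the formula $\theta^{F(\phi)}(x')=\sum_{i,j} p(u_i\otimes I_n)a_j p\,F(\phi)(x'\,pb_j(v_i\otimes I_n)p)$ from Remark~\ref{remark:Mod4}, then computes directly that this equals $(\theta^{\phi}(d)\otimes I_n)p$ for $x'=(d\otimes I_n)p$, using $\sum_j a_j pb_j=1$ and $\sum_i u_i\phi(dv_i)=\theta^{\phi}(d)$; separately it evaluates $(\pi\circ\theta^{\phi}\circ\pi^{-1})((d\otimes I_n)p)=(\theta^{\phi}(d)\otimes I_n)p$ and matches the two sides. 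Your approach instead verifies the modular condition for $\pi\circ\theta^{\phi}\circ\pi^{-1}$ and appeals to the uniqueness clause of Theorem~\ref{thm:Mod2}, reducing everything to the entrywise identity $\phi(d\,y'_{kl})=\phi(y'_{kl}\,\theta^{\phi}(d))$. Your route is cleaner and avoids the auxiliary elements $a_j,b_j$ entirely; the paper's route is more hands-on but has the minor advantage of exhibiting $\theta^{F(\phi)}$ concretely without needing the uniqueness statement. It is worth noting that the paper itself later adopts exactly your strategy in the proof of Theorem~\ref{thm:Mod6}, where it establishes $\theta^{f_{[X,Y]}(\phi)}=\Psi_C^{-1}\circ\theta^{F(\phi)}\circ\Psi_C$ by verifying the modular condition and invoking uniqueness.
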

\begin{proof} Let $\{(u_i , v_i )\}_{i=1}^m$ be a quasi-basis for $\phi$. Then by Lemma \ref{lem:BP7},
$$
\{(p(u_i \otimes I_n )a_j p \, , \, pb_j (v_i \otimes I_n )p)\}_{i=1,2\dots, m, \, j=1,2\dots, K}
$$
is a quasi-basis for $F(\phi)$, where $a_1 , a_2 , \dots , a_K , b_1 , b_2 , \dots , b_K$
are elements in $M_n (B)$
with $\sum_{j=1}^K a_j pb_j =1_{M_n (B)}$.
Then by Remark \ref{remark:Mod4} and the definitions of $F(\phi)$, $\phi$,
for any $d\in B' \cap D$,
\begin{align*}
\theta^{F(\phi)}((d\otimes I_n )p) & =\sum_{i, j}p(u_i \otimes I_n )a_j pF(\phi)((d\otimes I_n )pb_j (v_i \otimes I_n )p) \\
& =\sum_{i, j}p(u_i \otimes I_n )a_j p(\phi\otimes\id)((d\otimes I_n )pb_j (v_i \otimes I_n )p) \\
& =\sum_{i, j}p(u_i \otimes I_n )(\phi\otimes \id)(a_j pb_j (dv_i \otimes I_n ))p \\
& =\sum_i p(u_i \otimes I_n )(\phi\otimes\id)(dv_i \otimes I_n )p \\
& =\sum_i p(u_i \phi(dv_i )\otimes I_n )p \\
& =(\theta^{\phi}(d)\otimes I_n )p .
\end{align*}
On the other hand, for any $d\in B' \cap D$,
$$
(\pi\circ\theta^{\phi}\circ\pi^{-1})((d\otimes I_n )p)=(\pi\circ\theta^{\phi})(d)=(\theta^{\phi}(d)\otimes I_n )p .
$$
Hence
$$
\theta^{F(\phi)}(d)=(\pi\circ\theta^{\phi}\circ\pi^{-1})(d)
$$
for any $d\in(pM_n (B)p)' \cap pM_n (D)p$. Therefore, we obtain the conclusion.
\end{proof}

\begin{thm}\label{thm:Mod6} Let $A\subset C$ and $B\subset D$ be unital inclusions of unital $C^*$-algebras
which are strongly Morita equivalent with respect to a $C-D$-equivalence bimodule $Y$ and
its closed subspace $X$. Let $\phi$ be any element in ${}_B \BB_B (D, B)$ with a quasi-basis for
$\phi$. Let $f_{[X, Y]}$ be the isometric isomorphism of ${}_B \BB_B (D, B)$ onto ${}_A \BB_A (C, A)$ defined in
Section \ref{sec:con}. Then $f_{[X, Y]}(\phi)$ is an element in ${}_A \BB_A (C, A)$ with a quasi-basis for
$f_{[X, Y}](\phi)$ and
there is an isomorphism $\rho$ of $B' \cap D$ onto $A' \cap C$ such that
$$
\theta^{f(\phi)}=\rho\circ\theta^{\phi}\circ\rho^{-1} .
$$
\end{thm}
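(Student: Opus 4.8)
The plan is to reduce the statement to the matrix-amplification picture of Section \ref{sec:St}, in which Lemma \ref{lem:Mod5} is already the exact analogue of the assertion. Recall from Lemma \ref{lem:St8} that $f_{[X,Y]}(\phi)=\Psi_A^{-1}\circ F(\phi)\circ\Psi_C$, where $\Psi_C:C\to pM_n(D)p$ is a $*$-isomorphism carrying $A$ onto $pM_n(B)p$ (so that $\Psi_A=\Psi_C|_A$), $p\in M_n(B)$ is a full projection, and $F(\phi)=(\phi\otimes\id_{M_n(\BC)})|_{pM_n(D)p}$.

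First I would isolate an elementary transport lemma. Suppose $\alpha:C\to\widetilde{C}$ is a $*$-isomorphism of unital $C^*$-algebras carrying a unital $C^*$-subalgebra $A$ onto a unital $C^*$-subalgebra $\widetilde{A}$, and $\chi\in{}_{\widetilde{A}}\BB_{\widetilde{A}}(\widetilde{C},\widetilde{A})$ has a quasi-basis $\{(u_i,v_i)\}_{i=1}^m$. Then $\{(\alpha^{-1}(u_i),\alpha^{-1}(v_i))\}_{i=1}^m$ is a quasi-basis for $\alpha^{-1}\circ\chi\circ\alpha\in{}_A\BB_A(C,A)$; and, since $\alpha$ restricts to an isomorphism $\alpha_0$ of $A'\cap C$ onto $\widetilde{A}'\cap\widetilde{C}$, substituting $x=\alpha(x')$ and $y=\alpha(y')$ into the modular condition $\chi(xy)=\chi(y\theta^\chi(x))$ shows that $\alpha^{-1}\circ\chi\circ\alpha$ satisfies the modular condition for $\alpha_0^{-1}\circ\theta^\chi\circ\alpha_0$. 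By the uniqueness part of Theorem \ref{thm:Mod2}, $\theta^{\alpha^{-1}\circ\chi\circ\alpha}=\alpha_0^{-1}\circ\theta^\chi\circ\alpha_0$.

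Next I would apply this with $\alpha=\Psi_C$, $\widetilde{A}=pM_n(B)p$, $\widetilde{C}=pM_n(D)p$, $\chi=F(\phi)$, so that $f_{[X,Y]}(\phi)=\alpha^{-1}\circ F(\phi)\circ\alpha$ by Lemma \ref{lem:St8} (using $\Psi_A=\Psi_C|_A$). By Lemma \ref{lem:Mod5} (or Lemma \ref{lem:BP7}) $F(\phi)$ has a quasi-basis, hence so does $f_{[X,Y]}(\phi)$ by the transport lemma (this also follows at once from Proposition \ref{prop:BP8}); and Lemma \ref{lem:Mod5} gives $\theta^{F(\phi)}=\pi\circ\theta^\phi\circ\pi^{-1}$, where $\pi:B'\cap D\to(pM_n(B)p)'\cap pM_n(D)p$ is the isomorphism $d\mapsto(d\otimes I_n)p$. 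Writing $\alpha_0=\Psi_C|_{A'\cap C}$, an isomorphism of $A'\cap C$ onto $(pM_n(B)p)'\cap pM_n(D)p$, the transport lemma yields
$$
\theta^{f_{[X,Y]}(\phi)}=\alpha_0^{-1}\circ\theta^{F(\phi)}\circ\alpha_0
=\alpha_0^{-1}\circ\pi\circ\theta^\phi\circ\pi^{-1}\circ\alpha_0 .
$$
Setting $\rho=\alpha_0^{-1}\circ\pi$, an isomorphism of $B'\cap D$ onto $A'\cap C$, we obtain $\theta^{f_{[X,Y]}(\phi)}=\rho\circ\theta^\phi\circ\rho^{-1}$, which is the assertion.

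The whole content sits in the transport lemma; the remainder is bookkeeping of the identifications $\Psi_A$, $\Psi_C$, $\pi$. The one point needing a little care is checking that the substitution in the modular condition is clean --- that $\alpha$ really carries $A'\cap C$ onto $\widetilde{A}'\cap\widetilde{C}$ (immediate, as $\alpha$ is a $*$-isomorphism with $\alpha(A)=\widetilde{A}$) and that the resulting identity is verbatim the modular condition for the conjugated automorphism --- after which uniqueness in Theorem \ref{thm:Mod2} pins the automorphism down. I anticipate no genuine obstacle here.
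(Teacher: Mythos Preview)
Your proposal is correct and follows essentially the same route as the paper: both arguments invoke Lemma~\ref{lem:St8} to write $f_{[X,Y]}(\phi)=\Psi_C^{-1}\circ F(\phi)\circ\Psi_C$, apply Lemma~\ref{lem:Mod5} to identify $\theta^{F(\phi)}$, and then use the uniqueness in Theorem~\ref{thm:Mod2} after checking that the conjugated automorphism satisfies the modular condition for $f_{[X,Y]}(\phi)$. The only difference is presentational---you isolate this last step as a general ``transport lemma'' for $*$-isomorphisms, whereas the paper carries out the same verification by a direct chain of equalities in the specific case at hand; the resulting isomorphism $\rho=(\Psi_C|_{A'\cap C})^{-1}\circ\pi$ is identical in both.
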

\begin{proof} By Proposition \ref{prop:BP8}, $f(\phi)\in {}_B \BB_B (D, B)$ with a quasi-basis for $f(\phi)$.
Also, by Lemma \ref{lem:Mod5},
$$
\theta^{F(\phi)}=\pi\circ\theta^{\phi}\circ\pi^{-1} ,
$$
where $\pi$ is the isomorphism of $B' \cap D$ onto $(pM_n (B)p)' \cap pM_n (D)p$ defined as above and $n$ is
some positive integer, $p$ is a full projection in $M_n (B)$. Let $\Psi_C$ be the isomorphism of $C$ onto
$pM_n (D)p$ defined after Proposition \ref{prop:St5}. Since $\Psi_C |_A$ is an isomorphism of $A$ onto $pM_n (B)p$,
$\Psi_C^{-1}\circ\theta^{F(\phi)}\circ\Psi_C$ can be regarded as an automorphism of $A' \cap C$.
We claim that $f_{[X, Y]}(\phi)$ satisfies the modular condition for $\Psi_C^{-1}\circ\theta^{F(\phi)}\circ\Psi_C$.
Indeed, by Lemma \ref{lem:St8}, for any $x\in A' \cap C$, $y\in C$,
\begin{align*}
f_{[X, Y]}(\phi)(xy) & =(\Psi_C^{-1}\circ F(\phi)\circ\Psi_C )(xy) \\
& =(\Psi_C^{-1}\circ F(\phi))(\Psi_C (x)\Psi_C (y)) \\
& =(\Psi_C^{-1}\circ F(\phi))(\Psi_C (y)\theta^{F(\phi)}(\Psi_C (x))) \\
& =(\Psi_C^{-1}\circ F(\phi)\circ\Psi_C )(y(\Psi_C^{-1}\circ\theta^{F(\phi)}\circ\Psi_C )(x)) \\
& =f_{[X, Y]}(\phi)(y(\Psi_C^{-1}\circ\theta^{F(\phi)}\circ\Psi_C )(x)) .
\end{align*}
Hence by Theorem \ref{thm:Mod2}, $\theta^{f_{[X, Y]}(\phi)}=\Psi_C^{-1}\circ\theta^{F(\phi)}\circ\Psi_C$.
Thus, we obtain the conclusion.
\end{proof}

Let $A\subset C$ and $B\subset D$ be unital inclusions of unital
$C^*$-algebras and let $E^A$ and $E^B$ be conditional expectations of Watatani index-finite type from
$C$ and $D$ onto $A$ and $B$, respectively. We suppose that there is an element $(X, Y)\in\Equi (A, C, B, D)$
such that $E^A$ is strongly Morita equivalent to $E^B$, that is,
$$
f_{[X, Y]}(E^B )=E^A .
$$
For any element $h\in A' \cap C$, let ${}_h E^A$ be defined by
$$
{}_h E^A (c)=E^A (ch)
$$
for any $c\in C$. We also define ${}_k E^B$ in the same way as above for any $k\in B' \cap D$.

\begin{lemma}\label{lem:Mod7} With the above notation, for any $h\in A' \cap C$, there is the unique
element $k\in B' \cap D$ such that
$$
f_{[X, Y]}({}_k E^B )={}_h E^A .
$$
\end{lemma}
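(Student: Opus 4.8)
The plan is to reduce the statement to a bijectivity fact: \emph{for a conditional expectation $E^A$ of Watatani index-finite type from $C$ onto $A$, the assignment $h\in A'\cap C\mapsto {}_hE^A\in{}_A\BB_A(C,A)$ is a bijection}, and likewise for $E^B$. Granting this, the lemma is immediate. Since $f_{[X,Y]}$ is an isometric isomorphism of ${}_B\BB_B(D,B)$ onto ${}_A\BB_A(C,A)$ (Section \ref{sec:con}) and ${}_hE^A\in{}_A\BB_A(C,A)$, the element $\phi:=f_{[X,Y]}^{-1}({}_hE^A)$ lies in ${}_B\BB_B(D,B)$, hence equals ${}_kE^B$ for a unique $k\in B'\cap D$; this $k$ satisfies $f_{[X,Y]}({}_kE^B)={}_hE^A$, and it is unique because both $f_{[X,Y]}$ and $k\mapsto{}_kE^B$ are injective.

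To establish the bijectivity I would fix a quasi-basis $\{(u_i,v_i)\}_{i=1}^m$ for $E^A$. First, ${}_hE^A\in{}_A\BB_A(C,A)$: it is bounded with $\|{}_hE^A\|\le\|h\|$ since $\|E^A\|=1$, and it is an $A$-$A$-bimodule map because $h$ commutes with $A$ and $E^A$ is one, so ${}_hE^A(aca')=E^A(aca'h)=aE^A(ch)a'$ for $a,a'\in A$, $c\in C$. Injectivity follows from the quasi-basis identity: if ${}_hE^A=0$ then $E^A(v_ih)=0$ for every $i$, whence $h=\sum_{i=1}^m u_iE^A(v_ih)=0$; the same shows $g\mapsto {}_gE^A$ is injective on all of $C$. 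For surjectivity, given $\psi\in{}_A\BB_A(C,A)$, apply $\psi$ to $c=\sum_i E^A(cu_i)v_i$ and use the left $A$-linearity of $\psi$ together with the right $A$-module property of $E^A$ to obtain $\psi(c)=\sum_i E^A(cu_i)\psi(v_i)=E^A\bigl(c\sum_i u_i\psi(v_i)\bigr)$, i.e. $\psi={}_hE^A$ with $h:=\sum_i u_i\psi(v_i)\in C$. Finally $h\in A'\cap C$: for $a\in A$, $c\in C$ one has ${}_{ah}E^A(c)=E^A(cah)=\psi(ca)=\psi(c)a=E^A(ch)a=E^A(cha)={}_{ha}E^A(c)$, so $ah=ha$ by the injectivity just proved. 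The identical argument applies to $E^B$, which is also of Watatani index-finite type.

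The only point in this plan that is not pure quasi-basis bookkeeping is the verification that the candidate $h=\sum_i u_i\psi(v_i)$ lies in the relative commutant $A'\cap C$; this is where the bimodule property of $\psi$ and the injectivity of $g\mapsto{}_gE^A$ are combined. I would also expect the $k$ produced above to be given explicitly by $k=\rho^{-1}(h)$, where $\rho\colon B'\cap D\to A'\cap C$ is the isomorphism attached to $(X,Y)$ in Theorem \ref{thm:Mod6}; this can be checked by unwinding Lemma \ref{lem:St8} and the formula $\pi(d)=(d\otimes I_n)p$, but it is not needed for the statement as formulated.
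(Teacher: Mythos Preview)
Your argument is correct but follows a genuinely different route from the paper. You prove the abstract fact that, in the presence of a quasi-basis, the map $h\mapsto{}_hE^A$ is a bijection from $A'\cap C$ onto \emph{all} of ${}_A\BB_A(C,A)$, and then simply transport through the isomorphism $f_{[X,Y]}$. The paper instead works in the explicit matrix model: identifying $B\cong pM_n(A)p$, $D\cong pM_n(C)p$, $X\cong(1\otimes e)M_n(A)p$, it sets $k=(h\otimes I_n)p$ and checks the geometric identity $h\cdot x=x\cdot k$ for all $x\in X$, from which the Lemma~\ref{lem:con5} criterion yields $f_{[X,Y]}({}_kE^B)={}_hE^A$ directly; uniqueness comes from $C\cdot X=Y$. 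Your approach is cleaner for the bare existence/uniqueness statement and yields the useful byproduct that ${}_A\BB_A(C,A)\cong A'\cap C$ as vector spaces. The paper's approach, however, produces the explicit formula $k=\pi(h)$ for the isomorphism $\pi\colon A'\cap C\to B'\cap D$ recorded in Remark~\ref{remark:Mod8}, and this identification is what drives the subsequent results (Proposition~\ref{prop:Mod9}, Corollary~\ref{cor:Mod10}, Remark~\ref{remark:Mod11}); you note at the end that you expect $k=\rho^{-1}(h)$ but do not verify it, so if you pursue your route you would eventually need to supply that computation anyway.
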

\begin{proof} Since $A\subset C$ and $B\subset D$ are strongly Morita equivalent
with respect to $(X, Y)\in\Equi(A, C, B, D)$, there are a positive integer $n\in\BN$ and a projection
$p\in M_n (A)$ with $M_n (A)pM_n (A)=M_n (A)$ and  $M_n (C)pM_n (C)=M_n (C)$ such that
the inclusion $B\subset D$ is regarded as the inclusion $pM_n (A)p\subset pM_n (C)p$ and such that $X$ and $Y$
are identified with $(1\otimes e)M_n (A)p$ and $(1\otimes e)M_n (C)p$ (See \cite[Section 2]{KT4:morita}),
where $M_n (A)$ and $M_n (C)$ are identified with $A\otimes M_n (\BC)$ and $C\otimes M_n (\BC)$, respectively, $e$
is a minimal projection in $M_n (\BC)$ and we identified $A$ and $C$ with
$(1\otimes e)(A\otimes M_n (\BC))(1\otimes e)$ and $(1\otimes e)(C\otimes M_n (\BC))(1\otimes e)$,
respectively. Then we can see that for any $h\in A' \cap C$, there is the unique element $k\in B' \cap D$
such that
$$
h\cdot x=x\cdot k
$$
for any $x\in X$. Indeed, by the above discussions, we may assume that
$B=pM_n (A)p$, $D=pM_n (C)p$, $X=(1\otimes e)M_n (A)p$.
Let $h$ be any element in $A' \cap C$. Then for any $x\in M_n (A)$
\begin{align*}
h\cdot (1\otimes e)xp& =(1\otimes e)(h\otimes I_n )xp
=(1\otimes e)x(h\otimes I_n )p \\
& =(1\otimes e)xp(h\otimes I_n )p=(1\otimes e)xp\cdot (h\otimes I_n )p .
\end{align*}
By the proof of \cite [Lemma 10.3]{KT4:morita}, $(h\otimes I_n )p\in (pM_n (A)p)' \cap pM_n (C)p$.
Thus, for any $h\in A' \cap C$, there is an element $k\in B' \cap D$ such that
$$
h\cdot x=x\cdot k
$$
for any $x\in X$. Next, we suppose that there is another element $k_1\in B' \cap D$
such that $h\cdot x =x\cdot k_1$ for any $x\in X$. Then $(c\cdot x)\cdot k=(c\cdot x)\cdot k_1$ for
any $c\in C$, $x\in X$. Since $C\cdot X=Y$ by \cite [Lemma 10.1]{KT4:morita}, $k=k_1$.
Hence $k$ is unique. Furthermore, for any $x, z\in X$, $c\in C$,
\begin{align*}
\la x \, , \, {}_h E^A (c)\cdot z \ra_B & =\la x\, , \, E^A (ch)\cdot z \ra_B
=E^B (\la x \, , \, ch \cdot z \ra_D )=E^B (\la x \, , \, c\cdot z\cdot k \ra_D ) \\
& =E^B (\la x \, , \, c\cdot z \ra_D k) ={}_k E^B (\la x \, , \, c\cdot z \ra_D ) .
\end{align*}Therefore, we obtain the conclusion by Lemma \ref{lem:con5}.
\end{proof}

\begin{remark}\label{remark:Mod8} Let $\pi$ be the map from $A' \cap C$ to
$(pM_n (A)p)' \cap pM_n (C)p$ defined by $\pi(h)=(h\otimes I_n )p$ for any $h\in A' \cap C$.
Then $\pi$ is an isomorphism of $A' \cap C$ onto $(pM_n (A)p)' \cap pM_n (C)p$ by
the proof of \cite [Lemma 10.3]{KT4:morita}. We regard $\pi$ as an isomorphism of $A' \cap C$
onto $B' \cap D$. By the above proof, we can see that
$k=\pi(h)$. Thus we obtain that $f_{[X, Y]}({}_{\pi(h)}E^B )={}_h E^A$
for any $h\in A' \cap C$, that is, for any $h\in A' \cap C$, ${}_h E^A$ and ${}_{\pi(h)}E^B$ are
strongly Morita equivalent.
\end{remark}

\begin{prop}\label{prop:Mod9} With the above notation, $\Pic({}_h E^A )\cong \Pic({}_{\pi(h)}E^B )$ for any
$h\in A' \cap C$.
\end{prop}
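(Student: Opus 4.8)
The plan is to deduce this directly from Lemma \ref{lem:Pg2}. That lemma says that whenever $\phi\in{}_B\BB_B(D,B)$ and $\psi\in{}_A\BB_A(C,A)$ are strongly Morita equivalent with respect to some element of $\Equi(A,C,B,D)$, then $\Pic(\phi)\cong\Pic(\psi)$ as groups. So all I need is to recognize the two maps ${}_{\pi(h)}E^B$ and ${}_h E^A$ as a strongly Morita equivalent pair in the sense of Definition \ref{def:SM1}, and the conclusion follows immediately.

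First I would record that ${}_h E^A$ really is an element of ${}_A\BB_A(C,A)$: it is bounded, and for $a\in A$, $c\in C$ one has ${}_h E^A(ac)=E^A(ach)=a\,{}_h E^A(c)$, while ${}_h E^A(ca)=E^A(cah)=E^A(cha)={}_h E^A(c)\,a$, where the middle equality uses $h\in A'\cap C$. The identical computation, together with $\pi(h)\in B'\cap D$ (from Remark \ref{remark:Mod8}), shows ${}_{\pi(h)}E^B\in{}_B\BB_B(D,B)$. The crucial input is then Remark \ref{remark:Mod8} itself, which states $f_{[X,Y]}({}_{\pi(h)}E^B)={}_h E^A$; by Definition \ref{def:SM1} this is precisely the statement that ${}_{\pi(h)}E^B$ and ${}_h E^A$ are strongly Morita equivalent with respect to $(X,Y)\in\Equi(A,C,B,D)$. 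Applying Lemma \ref{lem:Pg2} to the pair $\phi={}_{\pi(h)}E^B$, $\psi={}_h E^A$ with $(Z,W)=(X,Y)$ gives $\Pic({}_h E^A)\cong\Pic({}_{\pi(h)}E^B)$ as groups.

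I expect essentially no obstacle here: all of the genuine content — constructing the element $k=\pi(h)\in B'\cap D$ with $h\cdot x=x\cdot k$ for $x\in X$, verifying $\pi$ is an isomorphism of $A'\cap C$ onto $B'\cap D$, and checking $f_{[X,Y]}({}_{\pi(h)}E^B)={}_h E^A$ — was already carried out in Lemma \ref{lem:Mod7} and Remark \ref{remark:Mod8}, and the group isomorphism mechanism is supplied wholesale by Lemma \ref{lem:Pg2}. The only point requiring (routine) attention is confirming membership of ${}_h E^A$ and ${}_{\pi(h)}E^B$ in the relevant bimodule-map spaces, which is exactly the commutation computation above.
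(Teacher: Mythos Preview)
Your proposal is correct and follows exactly the paper's approach: the paper's proof is the single line ``This is immediate by Lemma \ref{lem:Pg2},'' relying implicitly on Remark \ref{remark:Mod8} for the strong Morita equivalence of ${}_h E^A$ and ${}_{\pi(h)}E^B$. You have simply spelled out the routine verifications that the paper leaves tacit.
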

\begin{proof} This is immediate by Lemma \ref{lem:Pg2}
\end{proof}

\begin{cor}\label{cor:Mod10} Let $A\subset C$ be a unital inclusion of unital $C^*$-algebras.
Let $E^A$ be a conditional expectation of Watatani index-finite type
from $C$ onto $A$. Let $[X, Y]\in \Pic(E^A )$. Then there is an automorphism $\alpha$ of
$A' \cap C$ such that
$$
f_{[X, Y]}({}_{\alpha(h)}E^A )={}_h E^A
$$
for any $h\in A' \cap C$.
\end{cor}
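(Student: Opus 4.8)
The plan is to obtain the statement by specialising Lemma \ref{lem:Mod7} together with Remark \ref{remark:Mod8} to the situation in which the second inclusion $B\subset D$ coincides with the first inclusion $A\subset C$ and $E^B =E^A$.

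First I would observe that, by Definition \ref{def:Pg1}, the condition $[X, Y]\in\Pic(E^A )$ means exactly $f_{[X, Y]}(\phi)=\phi$ for $\phi=E^A$; that is, regarding $(X, Y)$ as an element of $\Equi(A, C, A, C)$ and writing $B=A$, $D=C$, $E^B =E^A$, the conditional expectations $E^A$ and $E^B$ are strongly Morita equivalent with respect to $(X, Y)\in\Equi(A, C, B, D)$. Since $E^A$ is assumed to be of Watatani index-finite type, so is $E^B =E^A$; hence both conditional expectations admit quasi-bases and all the hypotheses needed to apply Lemma \ref{lem:Mod7} and Remark \ref{remark:Mod8} are in force.

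Next I would invoke Remark \ref{remark:Mod8}: it produces an isomorphism $\pi$ of $A' \cap C$ onto $B' \cap D$ with the property that $f_{[X, Y]}({}_{\pi(h)}E^B )={}_h E^A$ for every $h\in A' \cap C$. Here $\pi$ is assembled from the isomorphism $h\mapsto (h\otimes I_n )p$ of $A' \cap C$ onto $(pM_n (A)p)' \cap pM_n (C)p$ (cf.\ the proof of \cite [Lemma 10.3]{KT4:morita}) together with the identifications $B\cong pM_n (A)p$ and $D\cong pM_n (C)p$ furnished by the strong Morita equivalence of the inclusions, for a suitable positive integer $n$ and full projection $p\in M_n (A)$; uniqueness of $\pi(h)$ for each $h$, and hence well-definedness of $\pi$, is part of Lemma \ref{lem:Mod7}. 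Since in our situation $B=A$ and $D=C$, we have $B' \cap D =A' \cap C$, so $\pi$ is an automorphism of $A' \cap C$. Setting $\alpha=\pi$ then gives $f_{[X, Y]}({}_{\alpha(h)}E^A )={}_h E^A$ for all $h\in A' \cap C$, which is the assertion.

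The only step I expect to require any real care is the verification that $\pi$, which in Remark \ref{remark:Mod8} is first obtained as an isomorphism onto $(pM_n (A)p)' \cap pM_n (C)p$, genuinely becomes an automorphism of $A' \cap C$ once one composes with the isomorphisms $\Psi_A :A\to pM_n (B)p$ and $\Psi_C :C\to pM_n (D)p$ used throughout Section \ref{sec:Mod} (here with $B=A$ and $D=C$); this is precisely the ``we regard $\pi$ as an isomorphism of $A' \cap C$ onto $B' \cap D$'' clause of Remark \ref{remark:Mod8}, and it amounts to bookkeeping with these identifications rather than to any new argument. Everything else is a direct citation of Lemma \ref{lem:Mod7} and Remark \ref{remark:Mod8}.
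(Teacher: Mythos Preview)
Your proposal is correct and follows exactly the paper's own argument: the paper's proof simply says ``This is immediate by Lemma \ref{lem:Mod7} and Remark \ref{remark:Mod8},'' which is precisely the specialisation $B=A$, $D=C$, $E^B=E^A$ that you carry out. The extra care you take with the identifications is appropriate but not strictly needed beyond what Remark \ref{remark:Mod8} already records.
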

\begin{proof} This is immediate by Lemma \ref{lem:Mod7} and Remark \ref{remark:Mod8}.
\end{proof}

Let $\rho_A$ and $\rho_B$ be the (not $*$-) anti-isomorphism of $A' \cap C$
and $B' \cap D$ onto $C' \cap C_1$ and $D' \cap D_1$, which are defined in
\cite [pp.79]{Watatani:index}, respectively. By the discussions as above or the discussions
in \cite [Section 2]{KT4:morita}, there are a positive integer $n$ and a projection $p$ in
$M_n (A)$ satisfying
\begin{align*}
& M_n (A)pM_n (A) =M_n (A), \quad M_n (C)pM_n (C)=M_n (C), \\
& M_n (C_1 )pM_n (C_1 ) =M_n (C_1 ) , \\
& B \cong pM_n (A), \quad D\cong pM_n (C)p, \quad D_1 \cong pM_n (C_1 )p
\end{align*}
as $C^*$-algebras. Then by the proof of \cite [Lemma 10. 3]{KT4:morita},
\begin{align*}
(pM_n (A)p)' \cap pM_n (C)p & =\{(h\otimes I_n )p \, | \, h\in A' \cap C \} , \\
(pM_n (C)p)' \cap pM_n (C_1 )p & =\{(h_1 \otimes I_n )p \, | \, h_1 \in C' \cap C_1  \} .
\end{align*}
And by easy computations, the anti-isomorphism $\rho$ of $(pM_n (A)p)' \cap pM_n (C)p$ onto
$(pM_n (C)p)' \cap pM_n (C_1 )p$ defined in the same way as in \cite[pp.79]{Watatani:index} is
following:
$$
\rho((h\otimes I_n )p)=(\rho_A (h)\otimes I_n )p
$$
for any $h\in A' \cap C$. This proves that $\pi_1 \circ\rho_A =\rho_B \circ\pi$,
where $\pi$ and $\pi_1$ are the isomorphisms of $A' \cap C$ and $C' \cap C_1$ onto
$(pM_n (A)p)' \cap pM_n (C)p$ and $(pM_n (C)p)' \cap pM_n (C_1 )p$
defined in \cite [Lemma 10.3]{KT4:morita}, respectively and we regard $\pi$ and $\pi_1$ as
isomorphisms of $A' \cap C$ and $C' \cap C_1$ onto $B' \cap D$ and $D' \cap D_1$, respectively.
Then we have the following:

\begin{remark}\label{remark:Mod11} (1) If $f_{[X, Y]}(E^B )=E^A$,
then $f_{[Y, Y_1 ]}({}_{\rho_B (\pi(h))}E^D )={}_{\rho_A (h)}E^C $ for any $h\in A' \cap C$. Indeed,
by Lemma \ref{lem:basic2} $f_{[Y, Y_1 ]}(E^D )=E^C$.
Thus by Remark \ref{remark:Mod8},
for any $c\in C' \cap C_1$, $f_{[Y, Y_1 ]}({}_{\pi_1 (c)}E^D )={}_c E^C$.
Hence for any $h\in A' \cap C$,
$$
f_{[Y, Y_1 ]}({}_{\rho_B (\pi(h))}E^D )=f_{[Y, Y_1 ]}({}_{\pi_1 (\rho_A (h))}E^D )={}_{\rho_A (h)}E^C 
$$
since $\pi_1 \circ \rho_A =\rho_B \circ\pi$.
\newline
(2) We suppose that $\Ind_W (E^A )\in A$ and 
$f_{[Y, Y_1 ]}(E^D )=E^C$. Then we can obtain that
$f_{[X, Y]}({}_{(\rho_B^{-1}(\pi_1 ((c))}E^B )={}_{\rho_A^{-1}(c)}E^A$
for any $c\in C' \cap C_1$. In the same way as above, this is immediate by Lemma \ref{lem:basic2}
and by Remark \ref{remark:Mod8}.
\end{remark}

\section{Examples}\label{sec:Ex} In this section, we shall give some easy examples of the
Picard groups of bimodule maps.

\begin{exam}\label{exam:Ex1} Let $A\subset C$ be a unital inclusion of unital $C^*$-algebras and $E^A$
a conditional expectation of Watatani index-finite type from $C$ onto $A$.
We suppose that $A' \cap C=\BC1$. Then $\Pic(E^A )=\Pic(A, C)$.
\end{exam}
\begin{proof} Since $E^A$ is the unique
conditional expectation by \cite [Proposition 1.4.1]{Watatani:index}, for any
$[X, Y]\in\Pic(A, C)$, $f_{[X, Y]}(E^A )=E^A$. Thus $\Pic(E^A )=\Pic(A, C)$.
\end{proof}

Let $(\alpha, w)$ be a twisted action of a countable discrete group $G$
on a unital $C^*$-algebra $A$ and let $A\rtimes_{\alpha, w, r}G$ be the reduced twisted
crossed product of $A$ by $G$.
Let $E^A$ be the canonical conditional expectation from
$A\rtimes_{\alpha, w, r}G$ onto $A$ defined by
$E^A (x)=x(e)$ for any $x\in K(G, A)$, where $K(G, A)$ is the $*$-algebra of all complex valued functions on $G$ with
a finite support and $e$ is the unit element in $G$.

\begin{exam}\label{exam:Ex2} We suppose that the twisted action $(\alpha, w)$ is free.
Then $E^A$ is the unique conditional
expectation from $A\rtimes_{\alpha, w, r}G$ onto $A$ by \cite [Proposition4.1]{Kodaka:countable}.
Hence $\Pic(E^A )=\Pic(A, A\rtimes_{\alpha, w, r}G)$ by the same reason as Example \ref{exam:Ex1}.
\end{exam}

Let $A$ be a unital $C^*$-algebra such that the sequence
$$
1\longrightarrow \Int(A)\longrightarrow \Aut(A)\longrightarrow \Pic(A)\longrightarrow 1
$$
is exact, where $\Aut(A)$ is the group of all automorphisms of $A$ and
$\Int(A)$ is the subgroup of $\Aut(A)$ of all inner automorphisms of $A$.
We consider the unital inclusion of unital $C^*$-algebras $\BC1\subset A$. Let
$\phi$ be a bounded linear functional on $A$. We regard $\phi$ as a $\BC$-bimodule map from $A$ to $\BC$.
Let $\Aut^{\phi}(A)$ be the subgroup of $\Aut(A)$ defined by
$$
\Aut^{\phi}(A)=\{\alpha\in\Aut(A) \, | \, \phi=\phi\circ\alpha\} .
$$
Also, let $U(A)$ be the group of all unitary elements in $A$ and
let $U^{\phi}(A)$ be the subgroup of $U(A)$ defined by
$$
U^{\phi}(A)=\{u\in U(A) \, | \, \phi\circ \Ad(u)=\phi\} .
$$
By \cite [Lemma 7.2 and Example 7.3]{Kodaka:Picard2},
$$
\Pic(\BC1, A)\cong U(A)/U(A' \cap A)\rtimes_s \Pic(A) ,
$$
that is, $\Pic(\BC1, A)$ is isomorphic to a semidirect product group of $U(A)/U(A' \cap A)$ by $\Pic(A)$ and
generated by
$$
\{[\BC u, A]\in\Pic(\BC1, A) \, | \, u\in U(A) \}
$$
and
$$
\{[\BC 1, X_{\alpha} ]\in\Pic(\BC1, A) \, | \, \alpha\in \Aut(A) \} ,
$$
where $X_{\alpha}$ is the $A-A$-equivalence
bimodule induced by $\alpha\in \Aut(A)$ (See \cite[Example 7.3]{Kodaka:Picard2}).

\begin{exam}\label{exam:Ex3} Let $A$ be a unital $C^*$-algebra such that the sequence
$$
1\longrightarrow \Int(A)\longrightarrow \Aut(A)\longrightarrow \Pic(A)\longrightarrow 1
$$
is exact. Let $\phi$ be a bounded linear functional on $A$. Let $\Pic^{\phi}(A)$ be the subgroup
of $\Pic(A)$ defined by
$$
\Pic^{\phi}(A)=\{[X_{\alpha}] \, | \, \alpha\in \Aut^{\phi}(A) \} .
$$
Then $\Pic(\phi)\cong U(A)/U(A' \cap A)\rtimes_s \Pic^{\phi}(A)$.
\end{exam}
\begin{proof} Let $\alpha\in\Aut(A)$. Then by Lemma \ref{lem:Pg3}(1),
$$
f_{[\BC1, X_{\alpha}]}(\phi)=\alpha\circ\phi\circ\alpha^{-1}=\phi\circ\alpha^{-1} .
$$
Hence $\alpha\in\Pic^{\phi}(A)$ if and only if $f_{[\BC1, X_{\alpha}]}(\phi)=\phi$.
Also, by Lemma \ref{lem:con5}, for any $a\in A$,
$$
\la u \, , \, f_{[\BC u, A]}(\phi)(a)\cdot u \ra _{\BC}=\phi( \la u \, , \, a\cdot u \ra_A )=\phi(u^* au) ,
$$
that is, $f_{[\BC u , A]}(\phi)(a)=\phi(\Ad(u^* )(a))$. Hence by \cite [Example 7.3]{Kodaka:Picard2},
$$
\Pic(\phi)\cong U^{\phi}(A)/U(A' \cap A)\rtimes_s \Pic^{\phi}(A) .
$$
\end{proof}

\begin{remark}\label{remark:Ex4} If $\tau$ is the unique tracial state on $A$, $\Pic^{\tau}(A)=\Pic(A)$.
Hence
$$
\Pic(\tau)\cong \Pic(\BC1, A)\cong U(A)/U(A' \cap A)\rtimes_s \Pic(A) .
$$
\end{remark}

Let $A$ be a unital $C^*$-algebra such that the sequence
$$
1\longrightarrow \Int(A)\longrightarrow \Aut(A)\longrightarrow \Pic(A)\longrightarrow 1
$$
is exact. Let $n$ be any positive integer with $n\geq 2$. We consider the unital inclusion of
unital $C^*$-algebras $a\in A \mapsto a\otimes I_n \in M_n (A)$, where $I_n$ is the unit element
in $M_n (A)$. We regard $A$ as a $C^*$-subalgebra of $M_n (A)$ by the above unital inclusion map.
Let $E^A$ be the conditional expectation from $M_n (A)$ onto $A$ defined by
$$
E^A ([a_{ij}]_{i, j=1}^n )=\frac{1}{n}\sum_{i=1}^n a_{ii}
$$
for any $[a_{ij}]_{i, j=1}^n \in M_n (A)$. Let $\Aut_0 (A, M_n (A))$ be the group of all automorphisms
$\beta$ of $M_n (A)$ with $\beta|_A =\id$ on $A$. By \cite [Example 7.6]{Kodaka:Picard2}, 
$$
\Pic(A, M_n (A))\cong\Aut_0 (A, M_n (A))\rtimes_s \Pic(A)
$$
and the sequence
$$
1\longrightarrow \Aut_0 (A, M_n (A))\overset{\imath}\longrightarrow \Pic(A, M_n (A))\overset{f_A}
\longrightarrow\Pic(A)\longrightarrow 1
$$
is exact, where $\imath$ is the inclusion map of $\Aut_0 (A, M_n (A))$ defined by
$$
\imath(\beta)=[A, Y_{\beta}]
$$
for any $\beta\in\Aut_0 (A, M_n (A))$ and $f_A$ is defined by $f_A ([X, Y])=[X]$ for any $[X, Y]\in\Pic(A, M_n (A))$.
Also, let $\jmath$ be the homomorphism of $\Pic(A)$ to $\Pic(A, M_n (A))$ defined by
$\jmath([X_{\alpha}])=[X_{\alpha}, X_{\alpha\otimes\id}]$ for any $\alpha\in\Aut(A)$.

\begin{exam}\label{exam:Ex5} Let $A$ be a unital $C^*$-algebra such that the sequence
$$
1\longrightarrow \Int(A)\longrightarrow \Aut(A)\longrightarrow \Pic(A)\longrightarrow 1
$$
is exact. Let $n$ be any positive integer with $n\geq 2$. Let $E^A$ be as above. Let $\Aut_0^{E^A}(A, M_n (A))$
be the subgroup of $\Aut_0 (A, M_n (A))$ defined by
$$
\Aut_0^{E^A}(A, M_n (A))=\{\beta\in\Aut_0 (A, M_n (A)) \, | \, E^A =E^A \circ \beta \} .
$$
Then $\Pic(E^A )\cong \Aut_0^{E^A}(A, M_n (A))\rtimes_s \Pic(A)$.
\end{exam}
\begin{proof} Let $\beta\in\Aut_0 (A, M_n (A))$. Then by Lemma \ref{lem:Pg3}(1),
$$
f_{[X_{\beta}, Y_{\beta}]}(E^A )=\beta\circ E^A \circ\beta^{-1}=E^A \circ\beta^{-1} .
$$
Hence $\beta\in\Aut_0^{E^A}(A, M_n (A))$ if and only if $f_{[X_{\beta}, Y_{\beta}]}(E^A )=E^A$.
Also, by Lemma \ref{lem:Pg3}(1) for any $\alpha\in\Aut(A)$,
$$
f_{[X_{\alpha}, X_{\alpha\otimes\id}]}(E^A )=\alpha\circ E^A \circ (\alpha^{-1}\otimes\id)=E^A
$$
since we identify $A$ with $A\otimes I_n$. Thus by \cite [Example 7.6]{Kodaka:Picard2},
$$
\Pic(E^A)\cong \Aut_0^{E^A}(A, M_n (A))\rtimes_s \Pic(A) .
$$
\end{proof}


\begin{thebibliography}{99}





\bibitem{Brown:hereditary}L. G. Brown,
{\it Stable isomorphism of hereditary subalgebra of $C^*$-algebras},
Pacific J. Math.
{\bf 71}
(1977),
335--348.

\bibitem{BGR:linking} L. G. Brown, P. Green and M. A. Rieffel,
{\it Stable isomorphism and strong Morita equivalence of $C^*$-algebras},
Pacific J. Math.
{\bf 71}
(1977),
349--363.






\bibitem{FK:conditional} M. Frank and E. Kirchberg,
{ \it On  conditional expectations of finite index},
J. Operator Theory
{\bf 40}
(1998), 87-111.


\bibitem{JT:KK}K. K. Jensen and K. Thomsen,
{\it Elements of KK-theory},
Birkh$\ddot a$user,
1991.

\bibitem{Izumi:simple}M. Izumi,
{\it Inclusions of simple $C^*$-algebras},
J. reine angew. Math.
{\bf 547}
(2002), 97--138.






\bibitem{Kodaka:Picard2}K. Kodaka,
{\it The Picard groups for unital inclusions of unital $C^*$-algebras},
Acta Sci. Math. (Szeged)
\bf
86
\rm (2020), 183-207.



\bibitem{Kodaka:countable}K. Kodaka,
{\it Strong Morita equivalence for inclusions of $C^*$-algebras induced by twisted actions of
a countable discrete group}, preprint, arXiv: 1910.1003774, Math. Scand., to appear.







\bibitem{KT4:morita}K. Kodaka and T. Teruya,
{\it The strong Morita equivalence for inclusions of $C^*$-algebras and conditional expectations for
equivalence bimodules}, J. Aust. Math. Soc.
{\bf 105}
(2018), 103--144.







\bibitem{Pedersen:auto} G. K. Pedersen,
{\it $C^* $-algebras and their automorphism groups,}
Academic Press, London, New York, San Francisco, 1979.

\bibitem{RW:continuous}I. Raeburn and D. P. Williams,
{\it Morita equivalence and continuous -trace $C^*$-algebras},
Mathematical Surveys and Monographs, {\bf 60}, Amer. Math. Soc., 1998.

\bibitem{Rieffel:rotation}M. A. Rieffel,
{\it $C^*$-algebras associated with irrational rotations},
Pacific J. Math.
{\bf 93}
(1981),
415--429.

\bibitem{Stormer:positive} E. St\o rmer,
{\it Positive linear maps of operator algebras},
Springer-Verlag, Berlin, Heidelberg, 2013.






\bibitem{Watatani:index}Y. Watatani,
{\it Index for $C^*$-subalgebras},
Mem. Amer. Math. Soc.,
{\bf 424}, Amer. Math. Soc.,
1990.


\end{thebibliography}
\end{document}